\documentclass[11pt]{article}


\usepackage[margin=2cm]{geometry}
\usepackage{amssymb,latexsym,amsmath,amsthm}
\usepackage[all]{xy}

\newtheorem{proposition}{Proposition}[section]
\newtheorem{theorem}[proposition]{Theorem}
\newtheorem{lemma}[proposition]{Lemma}
\newtheorem{corollary}[proposition]{Corollary}
\newtheorem{definition}[proposition]{Definition}
\theoremstyle{remark}
\newtheorem{remark}[proposition]{Remark}

\newcommand{\hh}[1]{\widehat{#1}}
\newcommand{\G}{\mathbb{G}}
\newcommand{\K}{\mathbb{K}}
\newcommand{\tp}{\xymatrix{*+<.7ex>[o][F-]{\scriptstyle\top}}}

\newcommand{\inv}{\operatorname{Inv}}

\newcommand{\mc}{\mathcal}
\newcommand{\mf}{\mathfrak}
\newcommand{\ip}[2]{\langle #1,#2 \rangle}

\newcommand{\vnten}{\overline\otimes}
\newcommand{\proten}{\widehat\otimes}
\newcommand{\ehten}{\otimes^{eh}}
\newcommand{\id}{\operatorname{id}}
\newcommand{\lin}{\operatorname{lin}}
\newcommand{\wap}{{\operatorname{wap}}}
\newcommand{\ap}{{\operatorname{ap}}}
\newcommand{\aone}{\Box}
\newcommand{\atwo}{\Diamond}

\newcommand{\sch}{\textsf{SCH}}
\newcommand{\wch}{\textsf{WCH}}
\newcommand{\lcg}{\textsf{LCG}}
\newcommand{\sap}{{\operatorname{SAP}}}

\begin{document}

\title{Quantum Eberlein compactifications and invariant means}
\author{Biswarup Das, Matthew Daws}
\maketitle

\begin{abstract}
We propose a definition of a ``$C^*$-Eberlein'' algebra, which is a weak form
of a $C^*$-bialgebra with a sort of ``unitary generator''.  Our definition is
motivated to ensure that commutative examples arise exactly from semigroups of
contractions on a Hilbert space, as extensively studied recently by Spronk and
Stokke.  The terminology arises as the Eberlein algebra, the uniform closure
of the Fourier-Stieltjes algebra $B(G)$, has character space $G^{\mathcal E}$, which
is the semigroup compactification given by considering all semigroups of
contractions on a Hilbert space which contain a dense homomorphic image of $G$.
We carry out a similar construction for locally compact quantum groups, leading
to a maximal $C^*$-Eberlein compactification.  We show that $C^*$-Eberlein
algebras always admit invariant means, and we apply this to prove various
``splitting'' results, showing how the $C^*$-Eberlein compactification splits
as the quantum Bohr compactification and elements which are annihilated by
the mean.  This holds for matrix coefficients, but for Kac algebras, we show it
also holds at the algebra level, generalising (in a semigroup-free way)
results of Godement.

2010 \emph{Mathematics Subject Classification:}
   Primary 43A07, 43A60, 46L89, 47D03; Secondary 22D25, 43A30, 43A65, 47L25

\emph{Keywords:} Eberlein compactification, Invariant means, Locally compact
quantum groups.
\end{abstract}

\section{Introduction}

For a locally compact group $G$, there is a tight connection between semigroup
compactifications of $G$ and certain introverted subspaces of $C^b(G)$.  For
example, the weakly almost periodic functions on $G$ form a commutative
$C^*$-algebra $\wap(G)$ whose character space $G^\wap$ becomes a compact semigroup
whose product is jointly continuous.  Indeed, $G^\wap$ is the largest such
``semitopological'' semigroup which contains a dense homomorphic image of $G$.
For further details see the monograph \cite{bjm}.  It can be hard to describe
functions in $\wap(G)$, but any coefficient of a unitary representation,
that is, a function in the Fourier-Stieltjes algebra $B(G)$, is weakly almost
periodic, and so the uniform norm closure of $B(G)$ in $C^b(G)$, the Eberlein
algebra $\mc{E}(G)$, is a subalgebra of $\wap(G)$.  Recent work of
Spronk and Stokke, \cite{ss}, studied $\mc{E}(G)$ and its character space
$G^{\mc E}$ in detail.  In particular, $G^{\mc E}$ is the maximal semigroup which
arises as a semigroup of contractions on a Hilbert space; see also \cite{m}.

In this paper, we are interested in ``non-commutative'' Eberlein algebras.
There has recently been much interest in ``topological quantum groups'',
which we shall take to mean $C^*$-bialgebras, probably with additional structure,
such as (locally) compact quantum groups.
For example, if $G$ is a locally compact group, we consider the algebra
$C_0(G)$ with the coproduct $\Delta:C_0(G)\rightarrow M(C_0(G)\otimes C_0(G))
\cong C^b(G\times G); \Delta(f)(s,t) = f(st)$ which captures the group product.
The theory of locally compact quantum groups, \cite{kv}, studies such objects
where also left and right invariant weights are assumed.  Then those quantum
groups which arise from a commutative $C^*$-algebra are exactly those coming
from locally compact groups in this way.

In this paper, we study an analogous ``non-commutative'' model, which we call
a ``$C^*$-Eberlein algebra'', which is a $C^*$-algebra with additional structure.
Our construction ensures that if we have a commutative $C^*$-algebra then we
exactly recover a semigroup of contractions on a Hilbert space.  We then go on
to study compactifications: given a $C^*$-bialgebra (maybe with extra structure)
what is the ``maximal'' $C^*$-Eberlein algebra which contains a dense 
``homomorphic'' copy of the starting bialgebra?
We make sense of this, and prove the existence of such an object.

Much of our motivation comes from a longer-term project of seeing what
weakly-almost periodic functions (or uniformly continuous, and so forth)
might mean in the non-commutative setting.  For a locally compact group $G$
one can form the Banach algebra $L^1(G)$ and then recover $\wap(G)$ by looking
at the weakly almost periodic functionals on $L^1(G)$.  This Banach algebraic
definition makes sense in general, and has been studied for quantum groups,
see for example \cite[Section~4]{runde2} and \cite{hnr}.
However, to our knowledge there has been little study in terms of
compactifications (but compare \cite{daws, salmi}).  Of course, a first step in
this programme must be to have an independent model of what a ``semitopological
quantum semigroup'' is.  This paper represents a step in this direction, where
we can make use of Hilbert space techniques.

A major use of $\wap(G)$ is that it always admits an invariant mean (whereas
for $L^\infty(G)$ we of course need $G$ to be amenable).  For example, this
allows us to talk about ``mixing'' for actions of artibrary groups, not just
$\mathbb Z$ or $\mathbb R$, see \cite{br}.  The construction of this mean is
typically derived from a fixed-point theorem (compare
\cite[Chapter~4.2]{bjm}) but for $\mc{E}(G)$ one can instead use Hilbert space
techniques.  This idea also successfully works for non-commutative $C^*$-Eberlein
algebras, allowing us to construct invariant means.  As an application of this,
for Kac algebras (locally compact quantum groups with a particularly
well-behaved antipode) we show that the Eberlein compactification splits as
a direct sum of the almost periodic compactification and those ``functions''
which are annihilated by the mean, thus generalising Godemont's Theorem
\cite[Theorem~16]{G} in the commutative situation.

The paper is organised as follows: we first introduced some terminology and
notation.  In Section~\ref{sec:semigps} we study semigroups of contractions
on a Hilbert space, and show how to think of such objects in a $C^*$-algebra
framework.  We can then allow the $C^*$-algebra to be noncommutative, while
still maintaining a lot of interesting structure.  For example, we study
morphisms of such objects in Section~\ref{sec:semigps:morphisms}, in
Section~\ref{sec:baissues}
we use some dual Banach algebra techniques to study permanence properties
(quotients and subspaces) of these objects, and in
Section~\ref{sec:embedded} we study which such semigroups arise naturally
from a (locally compact) quantum group.  In Section~\ref{sec:means} we
construct invariant means on these algebras, and in Section~\ref{sec:cmpts}
we use the results of Section~\ref{sec:embedded} to construct compactifcations.
This has obvious links with quantum \emph{group} compactifications, see
\cite{daws, soltan}, and we explore these links in Section~\ref{sec:bohr}.
In Section~\ref{sec:decomp}, for Kac algebras, we use invariant means to
decompose unitary corepresentations into ``almost periodic'' or ``compact
vector'' parts, and parts which are annihilated by the mean.  In the final
section, we explain carefully how our results relate to the work of
\cite{ss}, and also quickly look at what happens in the cocommutative case
(that is, for the Fourier algebra, instead of for $L^1(G)$).

\subsection{Acknowledgements}

The first named author was supported by the EPSRC grant EP/I026819/1 and
a WCMS-postdoctoral grant.  The 2nd named author was partially
supported by the EPSRC grant EP/I026819/1.  We thank Adam Skalski and
Stuart White for inspiration about the techniques in Section~\ref{sec:means}.
We thank the anonymous referee for very careful proof-reading.

\section{Notation and setting}\label{sec:note}

For a Hilbert space $H$ we write $\mc B(H),\mc B_0(H)$ for the bounded,
respectively, compact, operators on $H$.  We write $\overline{H}$ for the
conjugate Hilbert space: this has set of vectors $\{ \overline{\xi} : \xi\in H \}$
with scalar-multiplication such that $H\rightarrow\overline{H}, \xi\mapsto
\overline{\xi}$ becomes antilinear.  For $x\in\mc B(H)$ we define
$\overline{x}\in\mc B(\overline H)$ by $\overline{x}(\overline{\xi})
= \overline{ x(\xi) }$.  We also define $\top:\mc B(H)\rightarrow
\mc B(\overline{H})$ the ``transpose'' map by
$(\top(x)\overline\xi|\overline\eta) = (\overline{x^*\xi}|\overline\eta)
= (\eta|x^*\xi) = (x\eta|\xi)$ for $\xi,\eta\in H$; we might also write
$x^\top$ for $\top(x)$.

At certain points we use some basic ideas from the theory of Operator Spaces,
especially the various natural tensor products, see \cite[Section~II]{ER}
or \cite[Part~I]{Pisier}.  Similarly, we freely use the concept of the
multiplier algebra of a $C^*$-algebra, see \cite[Appendix~A]{mnw} for an
introduction relevant to this paper.  For $C^*$-algebras we write $\otimes$
for the spacial (minimal) tensor product, and for von Neumann algebras we
write $\vnten$ for the von Neumann algebra tensor product.

Given Banach spaces $E,F$ and a bounded linear map $T:E\rightarrow F^*$,
we can consider the restriction of the Banach space adjoint,
$T^*|_F : F \rightarrow E^*$, and the adjoint of this, $\tilde T = (T^*|_F)^*:
E^{**} \rightarrow F^*$ is the \emph{weak$^*$-extension} of $T$.  If $\kappa_E:
E\rightarrow E^{**}$ is the canonical map, then $\tilde T\kappa_E = T$.
When $E=A$ is a $C^*$-algebra and $F^*=M$ a von Neumann algebra, we speak
of the \emph{normal extension}.

A $C^*$-bialgebra if a pair $(A,\Delta)$ where $A$ is a $C^*$-algebra and
$\Delta:A\rightarrow M(A\otimes A)$ a non-degenerate $*$-homomorphism with
$(\Delta\otimes\id)\Delta = (\id\otimes\Delta)\Delta$.  The examples of
most interest to use are the locally compact quantum groups, \cite{kv},
which, rather roughly, are $C^*$-bialgebras with invariant weights.
For readable introductions see \cite{kus5,vaes}.
We need really rather little of the theory, and so we shall just develop
what we need as we come to it.  We will however setup a little notation.
We think of a quantum group as an abstract object $\G$, and then write
$C_0(\G)$ for the $C^*$-algebra arising.  In general (because of amenability
issues) there might be several natural $C^*$-algebras to consider.
For example, all cocommutative locally compact quantum groups arise as
$C^*_r(G)$, the reduced group $C^*$-algebra of a locally compact group $G$.
However, here we could also consider the full, or universal, group $C^*$-algebra
$C^*(G)$.  Similarly, we can consider $C^u_0(\G)$ the universal $C^*$-algebra
representing $\G$, see \cite{ku}; the main difference is that the weights on
$C^u_0(\G)$ might fail to be faithful.  In this paper, we choose to work with
$C^u_0(\G)$, because it is the maximal choice, but the theory works the same
if we use $C_0(\G)$; compare with Remark~\ref{rem:why_use_full_cstar} below.

The natural morphisms between $C^*$-bialgebras $(A,\Delta_A)$ and
$(B,\Delta_B)$ are non-degenerate $*$-homomorphisms $\theta:B\rightarrow M(A)$
with $\Delta_A\circ\theta = (\theta\otimes\theta)\circ\Delta_B$.  We
``reverse the arrows'' so that if $S,T$ are locally compact semigroups
and $A=C_0(S),B=C_0(T)$ with $\Delta_A,\Delta_B$ the canonical choices, then
we exactly recover the continuous semigroup homomorphisms $S\rightarrow T$.
In the setting of locally compact quantum groups, the paper \cite{mrw}
shows that, so long as we work at the universal level with $C_0^u(\G)$, this
notion of morphism can be recast in a variety of other natural ways.

A unitary corepresentation of a $C^*$-bialgebra $(A,\Delta)$ on a Hilbert space
$H$ is a unitary $U\in M(A\otimes\mc B_0(H))$ with $(\Delta\otimes\id)(U)
= U_{13}U_{23}$.  We sometimes write $H_U$ for $H$ to emphasise the link
between $U$ and $H$.  Given two corepresentations $U,V$ their \emph{tensor product}
is $U \tp V = U_{12} V_{13} \in M(A\otimes\mc B_0(H_U)\otimes\mc B_0(H_V))
= M(A\otimes\mc B_0(H_U \otimes H_V))$.

\section{Semigroups}\label{sec:semigps}

A semitopological semigroup $S$ is a semigroup which is also a (Hausdorff)
topological space, such that the product is separately continuous.  Following
\cite{ss}, but with slightly different terminology, we shall say that a
semitopological semigroup $S$ is (WCH) if $S$ is homeomorphic, as a semigroup,
to a sub-semigroup of contractions on a Hilbert space $H$,
denoted $\mc B(H)_{\|\cdot\|\leq 1}$, with the weak$^*$-topology.
If we replace the weak$^*$-topology by the strong$^*$-topology, we obtain the
property (SCH).

Let $\wch$ be the collection of all WCH semigroups, made into a category with
morphisms the continuous semigroup homomorphisms.  Similarly define $\sch$.
Both of these categories are full subcategories of $\textsf{STS}$, the category
of all semitopological semigroups with continuous semigroup homomorphisms.
Finally, let $\textsf{LCG}$ be the category of locally compact groups with
continuous group homomorphisms, so also $\lcg$ is a full subcategory
of $\textsf{STS}$.  Let $\textsf{CSCH}$, $\textsf{CWCH}$, $\textsf{CSTS}$,
$\textsf{CG}$ be the full subcategories of compact semigroups in, respectively,
$\sch$, $\wch$, $\textsf{STS}$ and $\lcg$.

In this main, first section, we wish to study $\wch$ from a ``noncommutative
topology'' viewpoint: that is, to find a class of $C^*$-algebras with extra
structure such that the commutative algebras exactly correspond to members
of $\wch$, and such that noncommutative algebras still have interesting
properties.

\subsection{The function-free picture}\label{sec:func_free}

In this section, we shall show how to form natural algebras from compact
semigroups of Hilbert space contractions.  This lays the framework, in the
following section, to describe such semigroups purely through certain
generalised C$^*$-bialgebras.

Let $S\in\textsf{CWCH}$ or $\textsf{CSCH}$, say
$S\subseteq\mc B(H)_{\|\cdot\|\leq 1}$, equipped with either the relative
weak$^*$ or SOT$^*$ topologies, as appropriate.  Here we work with compact
semigroups for convenience; see Section~\ref{sec:loc_cmpt_case} below
for some comments on the locally compact case.  Set $A=C(S)$, and let
$SC(S\times S)$ be the space of bounded separately continuous functions
$S\times S\rightarrow\mathbb C$.  Following \cite[Section~2]{runde1} we find
that for $f\in SC(S\times S)$, integrating out one variable gives a $C(S)$
function, and so there is a well-defined map
\[ M(S)\times M(S)\rightarrow\mathbb C; \qquad
(\mu,\lambda)\mapsto \int_S \int_S f(s,t) \ d\mu(s) \ d\lambda(t). \]
This sets up an isometric isomorphism between $SC(S\times S)$ and
$\mc B_{w^*}(M(S),M(S);\mathbb C)$ the space of bounded separately
weak$^*$-continuous bilinear maps $M(S)\times M(S)\rightarrow\mathbb C$,
see \cite[Proposition~2.5]{runde1} (which makes use of
\cite{groth, john1}).

The coproduct for $C(S)$ is naturally realised as a coassociative map
$\Phi:C(S)\rightarrow SC(S\times S); \Phi(f)(s,t) = f(st)$ for $f\in C(S),
s,t\in S$.  Working in the category of Operator
Spaces let $\proten$ be the operator space projective tensor product.  As
$C(S)$ is commutative, $C(S)^*=M(S)$ has the maximal operator space structure,
and so
\[ \mc B_{w^*}(M(S),M(S);\mathbb C) \subseteq (M(S)\proten M(S))^*
= C(S)^{**} \vnten C(S)^{**}. \]
Motivated by this, for an arbitrary $C^*$-algebra $A$ we define
$\mc{CB}_{w^*}(A^*,A^*;\mathbb C)$ to be the collection of
$x\in A^{**}\vnten A^{**}$ such that for each $\mu\in A^*$, the maps
\[ A^*\rightarrow \mathbb C; \quad \lambda\mapsto \ip{x}{\lambda\otimes\mu},
\ip{x}{\mu\otimes\lambda} \]
are both weak$^*$-continuous.  This is equivalent to defining
\[ \mc{CB}_{w^*}(A^*,A^*;\mathbb C) = \{ x\in A^{**}\vnten A^{**}:
(\mu\otimes\id)(x), (\id\otimes\mu)(x)\in A \ (\mu\in A^*) \}. \]
There is a problem here in that for non-commutative $A$, there is no reason why
this need be a subalgebra of $A^{**}\vnten A^{**}$; a counter-example is
given by taking $A=\mc B_0(H)$ for an infinite-dimensional $H$ and letting
$x\in \mc B(H\otimes H)$ be the tensor swap map, compare
Remark~\ref{rem:counter_example1} below.

Nonetheless, we can view $\Phi$ as a $*$-homomorphism $A\rightarrow
A^{**}\vnten A^{**}$ taking values in the subspace $\mc{CB}_{w^*}(A^*,A^*;\mathbb C)$.
Again, working with operator spaces, we have that
\[ A^{**}\vnten \mc B(H) = (A^*\proten\mc B(H)_*)^* = \mc{CB}(A^*,\mc B(H)). \]
As $A=C(S)$ is commutative, $\mc{CB}(A^*,\mc B(H)) = \mc{B}(A^*,\mc B(H))$ and
so we can define $V\in A^{**}\vnten \mc B(H)$ by
\[ \ip{V(\mu)}{\omega_{\xi,\eta}} = \int_S (x\xi|\eta) \ d\mu(x)
\qquad (\mu\in A^*=M(S), \xi,\eta\in H). \]
As $S$ carries the weak$^*$ or SOT$^*$ topology, the map $S\mapsto\mathbb C;
x\mapsto (x\xi|\eta)$ is continuous, and so the integral is defined.
Notice that $\|V\|\leq 1$.

\begin{lemma}
If $S\in\textsf{CSCH}$ then $V\in M(A\otimes\mc B_0(H))$, but this
need not be true for $S\in\textsf{CWCH}$.
\end{lemma}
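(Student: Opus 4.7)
The plan is to make essential use of the identification
\[ M(C(S) \otimes \mc B_0(H)) \;\cong\; C_{\mathrm{str}}(S, \mc B(H)), \]
where the right-hand side consists of bounded functions $f:S\to\mc B(H)$ such that $s\mapsto f(s)k$ and $s\mapsto kf(s)$ are norm continuous maps $S\to\mc B_0(H)$ for every $k\in\mc B_0(H)$; this follows from the standard identification $C(S)\otimes\mc B_0(H)=C(S,\mc B_0(H))$ and the description of its multiplier algebra. Under the ambient containment $M(A\otimes\mc B_0(H))\subseteq A^{**}\vnten\mc B(H)$, an element is identified with its function $S\to\mc B(H)$ obtained by evaluating at point masses.

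I would next observe that the defining formula for $V$, applied at $\mu=\delta_s$, gives $V(\delta_s)=s$; that is, $V$ corresponds to the inclusion map $\iota:S\hookrightarrow\mc B(H)$. Hence proving the first claim amounts to verifying that $\iota$ is strictly continuous. For a rank-one $k=|\xi\rangle\langle\eta|$ one has $sk=|s\xi\rangle\langle\eta|$ and $ks=|\xi\rangle\langle s^*\eta|$, so
\[ \|sk-s_0 k\| = \|s\xi - s_0\xi\|, \qquad \|ks-ks_0\| = \|s^*\eta - s_0^*\eta\|. \]
Norm continuity of both thus corresponds exactly to SOT$^*$ continuity of $\iota$, which holds by assumption on $S\in\textsf{CSCH}$. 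The extension from rank-one to arbitrary compact $k$ is a routine $\varepsilon/3$ argument using $\|\iota\|\le 1$ and the norm-density of finite-rank operators in $\mc B_0(H)$.

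For the negative assertion, I would exhibit a specific $S\in\textsf{CWCH}$ for which $V\notin M(A\otimes\mc B_0(H))$. A natural candidate is $S=\mc B(H)_{\|\cdot\|\le 1}$, equipped with the weak$^*$-topology (equivalently, WOT via the trace-class duality): by Banach--Alaoglu this is compact, and multiplication is separately WOT-continuous on bounded sets, so $S\in\textsf{CWCH}$. To show $\iota$ fails to be strictly continuous, pick $H=\ell^2$, let $s_n=|e_n\rangle\langle e_1|$ and $s_0=0$; then $s_n\to 0$ in WOT but, with $k=|e_1\rangle\langle e_1|$, we have $s_n k=|e_n\rangle\langle e_1|$ of norm $1$, so $s\mapsto sk$ is not norm-continuous at $0$. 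Consequently the function representing $V$ fails the strict-continuity condition, so $V\notin M(A\otimes\mc B_0(H))$.

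The main obstacle I anticipate is nailing down the multiplier identification cleanly enough to transfer the problem to strict continuity of the inclusion $\iota$; once that is in place, both directions reduce to essentially one-line verifications about SOT$^*$ versus WOT behaviour on the unit ball of $\mc B(H)$.
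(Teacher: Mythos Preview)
Your proposal is correct and follows essentially the same approach as the paper: both arguments reduce the multiplier condition to checking that $s\mapsto s\xi$ and $s\mapsto s^*\eta$ are norm continuous (i.e.\ SOT$^*$ continuity of the inclusion), verified on rank-one operators, and both use $S=\mc B(H)_{\|\cdot\|\le 1}$ with a weak$^*$-null but not SOT-null net for the counterexample. The only cosmetic difference is that you invoke the identification $M(C(S)\otimes\mc B_0(H))\cong C_{\mathrm{str}}(S,\mc B(H))$ up front, whereas the paper computes $V(f\otimes\theta_{\xi,\eta})$ directly as the function $x\mapsto f(x)\theta_{x\xi,\eta}$ and checks it lies in $C(S,\mc B_0(H))$.
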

\begin{proof}
Consider $f\otimes\theta_{\xi,\eta}\in A\otimes\mc B_0(H)$.  Then for
$\mu\in M(S), \omega_{\alpha,\beta}\in\mc B(H)_*$,
\[ \ip{V(f\otimes\theta_{\xi,\eta})}{\mu\otimes\omega_{\alpha,\beta}}
= \ip{V(f\mu)}{\omega_{\xi,\beta}} (\alpha|\eta)
= \int_S (x\xi|\beta)(\alpha|\eta) f(x) \ d\mu(x)
= \int_S (F(x)\alpha|\beta) \ d\mu(x), \]
if $F\in C(S,\mc B_0(H)) = A\otimes\mc B_0(H)$ is defined by
$F(x) = f(x) \theta_{x\xi,\eta}$.  Notice that $F$ is continuous because
$S\rightarrow H, x\mapsto x\xi$ is continuous, $S$ having the SOT$^*$-topology.
Similarly $(f\otimes\theta_{\xi,\eta})V=F'$ where $F'(x)=f(x)
\theta_{\xi,x^*\eta}$ is also continuous.

However, if $S\in\textsf{CWCH}$, for example $S=\mc B(H)_{\|\cdot\|\leq 1}$,
then there are nets $(x_i)$ in $S$ which are weak$^*$-null but not SOT-null,
say $(x_i\xi)$ is not norm null.  With $f=1$, we find that
$F(x_i)=\theta_{x_i\xi,\eta} \not\rightarrow 0=F(0)$ and so $F$ is not
continuous.
\end{proof}

\begin{remark}
We cannot expect $V$ to be unitary.  Indeed, from the calculations above,
for $f\in C(S), \theta\in\mc B_0(H), \omega_{\xi,\eta}\in\mc B(H)_*$ and
$x\in S$, setting $\mu=\delta_x\in M(S)$, we find that
\[ \ip{V(\mu\otimes\omega_{\xi,\eta})}{f\otimes\theta} =
\ip{V}{\mu f \otimes \omega_{\xi,\theta^*\eta}}
= \int_S (f(y) \theta y\xi|\eta) \ d\mu(y)
= (f(x) \theta x\xi|\eta), \]
and this determines $V(\mu\otimes\omega_{\xi,\eta}) \in M(S)\proten\mc B(H)_*$
as $C(S)\otimes\mc B_0(H)$ is weak$^*$-dense in $C(S)^{**}\vnten\mc B(H)$.
If $V(\mu\otimes\omega_{\xi,\eta})$ is approximately $\sum_i
\mu_i\otimes\omega_i$, then
\begin{align*} \ip{V^*V}{\mu\otimes\omega_{\xi,\eta}}
&\approx \sum_i \ip{V^*}{\mu_i\otimes\omega_i}
= \sum_i \overline{ \ip{V}{\mu_i^*\otimes\omega_i^*} }
= \sum_i \overline{ \int_S \ip{y}{\omega_i^*} \ d\mu_i^*(y) } \\
&= \sum_i \int_S \overline{ \ip{y}{\omega_i^*} } \ d\mu_i(y)
= \sum_i \int_S \ip{y^*}{\omega_i} \ d\mu_i(y).
\end{align*}
Now, for all $f,\theta$ we have that
\[ \sum_i \int_S \ip{f(y) \theta}{\omega_i} \ d\mu_i(y)
= \sum_i  \ip{\mu_i}{f} \ip{\theta}{\omega_i}
\approx \ip{V(\mu\otimes\omega_{\xi,\eta})}{f\otimes\theta}
= (f(x)\theta x\xi|\eta), \]
and so by taking limits,
\[ \ip{V^*V}{\mu\otimes\omega_{\xi,\eta}}
= (x^*x\xi|\eta). \]
By contrast, $\ip{1}{\mu\otimes\omega_{\xi,\eta}} = \int_S (\xi|\eta) \ d\mu(y)
= (\xi|\eta)$ and so $V^*V=1$ if and only if $x\in S\implies x^*x=1$, that is,
$S$ is a semigroup of isometries.  Similarly, $VV^*=1$ only when $S$ is a
semigroup of coisometries.
\end{remark}

\begin{lemma}
For any $\omega\in\mc B(H)_*$ we have that $(\id\otimes\omega)(V)\in A=C(S)$
and the collection $\{ (\id\otimes\omega)(V) : \omega\in\mc B(H)_* \}$ separates
the points of $S$.
\end{lemma}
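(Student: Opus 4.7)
The plan is to unwind the definition of $V$ against vector functionals, identify the resulting element of $A^{**}$ explicitly as a continuous function on $S$, and then extend to all normal functionals by norm approximation.

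First I would fix $\omega=\omega_{\xi,\eta}\in\mc B(H)_*$ and compute $(\id\otimes\omega)(V)\in A^{**}=M(S)^*$ by pairing with an arbitrary $\mu\in A^*=M(S)$:
\[ \ip{(\id\otimes\omega)(V)}{\mu} = \ip{V}{\mu\otimes\omega_{\xi,\eta}} = \int_S (x\xi|\eta) \ d\mu(x). \]
Because $S$ carries either the weak$^*$ or the SOT$^*$ topology, the scalar function $f_{\xi,\eta}:S\to\mathbb{C},\ x\mapsto(x\xi|\eta)$ is continuous and bounded, hence lies in $C(S)=A$. The displayed identity then says that $(\id\otimes\omega_{\xi,\eta})(V)$ is precisely $f_{\xi,\eta}$, viewed as an element of $A\subseteq A^{**}$.

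Next I would promote this from rank-one functionals to general $\omega\in\mc B(H)_*$. Any trace-class functional can be written as a norm-convergent sum $\omega=\sum_n \omega_{\xi_n,\eta_n}$ with $\sum_n\|\xi_n\|\|\eta_n\|<\infty$. By continuity of the slice map $\id\otimes(\cdot):\mc B(H)_*\to A^{**}$ applied to $V$, we get
\[ (\id\otimes\omega)(V) = \sum_n f_{\xi_n,\eta_n}, \]
where the series converges absolutely in the supremum norm on $C(S)$ (each term is bounded by $\|\xi_n\|\|\eta_n\|$). Since $C(S)$ is norm-closed in $A^{**}$, the limit lies in $A$, proving the first assertion.

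For the separation statement, take distinct $x_1,x_2\in S$. As bounded operators on $H$ they are unequal, so there exist $\xi,\eta\in H$ with $(x_1\xi|\eta)\neq(x_2\xi|\eta)$, i.e.\ $f_{\xi,\eta}(x_1)\neq f_{\xi,\eta}(x_2)$. Since $f_{\xi,\eta}=(\id\otimes\omega_{\xi,\eta})(V)$ by the first step, the collection $\{(\id\otimes\omega)(V):\omega\in\mc B(H)_*\}$ separates the points of $S$. No step here looks subtle; the only thing to keep an eye on is making sure the identification of $(\id\otimes\omega_{\xi,\eta})(V)$ with the continuous function $f_{\xi,\eta}$ in $A\subseteq A^{**}$ is done via the duality $\langle A,A^*\rangle=\langle C(S),M(S)\rangle$ correctly, which is exactly what the defining formula for $V$ delivers.
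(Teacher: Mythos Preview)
Your proof is correct and follows essentially the same approach as the paper's. The only difference is that the paper handles a general $\omega\in\mc B(H)_*$ in one stroke rather than reducing to vector functionals first: since $S$ carries the weak$^*$ topology, the map $x\mapsto\ip{x}{\omega}$ is continuous for \emph{every} $\omega\in\mc B(H)_*$, so $(\id\otimes\omega)(V)$ is immediately identified with this function in $C(S)$ without any trace-class decomposition.
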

\begin{proof}
By the definition of $V$ we find that
\[ \ip{\mu}{(\id\otimes\omega)(V)} = \int_S \ip{x}{\omega} \ d\mu(x)
\qquad (\mu\in M(S)), \]
and so $(\id\otimes\omega)(V)$ is the continuous function
$S\rightarrow\mathbb C; x\mapsto\ip{x}{\omega}$, a member of $A$.
This collection will clearly separate points as $\omega\in\mc B(H)_*$ varies.
\end{proof}

In general, there is no reason why $\{ (\id\otimes\omega)(V) :
\omega\in\mc B(H)_* \}$ should be a sub-algebra of $C(S)$.  However, by
modifying the Hilbert space, we can obtain this.  Let
\[ F(H) = \bigoplus_{m,n\in\mathbb N_0, m+n\geq 1} H^{m\otimes}
\otimes \overline{H}^{n\otimes}, \]
and define an isometry
$F:\mc B(H)_{\|\cdot\|\leq 1}\rightarrow\mc B(F(H))_{\|\cdot\|\leq 1}$ by
\[ F(x) = \bigoplus_{m,n\in\mathbb N_0, m+n\geq 1} x^{m\otimes}\otimes
\overline{x}^{n\otimes}. \]
A tedious calculation shows that $F$ is weak$^*$-continuous and
SOT$^*$-continuous.  Indeed, we can exploit boundedness and linearity to
reduce the task to showing that $\mc B(H)_{\|\cdot\|\leq 1}\rightarrow
\mc B(H^{m\otimes}\otimes \overline{H}^{n\otimes})_{\|\cdot\|\leq 1}$ is
weak$^*$(SOT$^*$)-continuous, which is clear.  As $S$ is compact,
$F(S)$ is compact and $F$ restricts to a homeomorphism $F:S\rightarrow F(S)$.
Compare this with the discussion in \cite[Section~1.3]{ss} and
\cite[Section~4]{stokke}.

A further complication is that if $0\in S$, then functions of the form
$(\id\otimes\omega)(V)$ can never ``strongly separate'' the points of $S$.
However, let $H'$ be an auxiliary Hilbert space (for example, $H'=\mathbb C$)
and define $S' = \{ x\oplus 1_{H'} : x\in S \} \subseteq\mc B(H\oplus H')$.
Then $S\rightarrow S', x\mapsto x\oplus 1_{H'}$ is a semigroup
homomorphism, and a homeomorphism.  So we may always assume that $0\not\in S$.

\begin{lemma}\label{lem:dense_subalg}
Form $V'$ from $F(S)$ as above.  Then the $*$-algebra generated by
$A_V=\{ (\id\otimes\omega)(V) : \omega\in\mc B(H)_* \}$ is dense in $A_{V'}
=\{ (\id\otimes\omega)(V') : \omega\in\mc B(F(H))_* \}$.
Under the assumption that $0\not\in S$, $A_{V'}$ is a dense $*$-subalgebra
of $C(S)$.
\end{lemma}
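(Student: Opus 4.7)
My plan is to unpack the definitions, identify how vector functionals on the summands of $F(H)$ correspond to products of matrix coefficients of $x$ and $\overline{x}$, verify that $A_{V'}$ is a $*$-subalgebra of $C(S)$ sitting between the $*$-algebra generated by $A_V$ and its uniform closure, and conclude the second assertion with Stone--Weierstrass.

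First I would unpack the definitions. By the previous lemma, $(\id\otimes\omega)(V)$ is the function $x\mapsto\ip{x}{\omega}$ on $S$, and since $F$ is a homeomorphism onto $F(S)$, $C(F(S))=C(S)$ and $(\id\otimes\omega')(V')$ is the function $x\mapsto\ip{F(x)}{\omega'}$ for $\omega'\in\mc B(F(H))_*$. For a vector functional $\omega'=\omega_{\xi,\eta}$ with $\xi,\eta$ basic tensors in the summand $H^{m\otimes}\otimes\overline{H}^{n\otimes}$, unravelling the conjugate-Hilbert-space inner product shows that this function is a product of $m$ elements of $A_V$ with $n$ elements of $\{f^*:f\in A_V\}$, and every such product arises in this way.

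From this I would draw both containments. The inclusion $A_V\subseteq A_{V'}$ comes from lifting $\omega\in\mc B(H)_*$ to the $(m,n)=(1,0)$ summand of $F(H)$. Closure of $A_{V'}$ under products follows by combining blocks $(m_1,n_1)$ and $(m_2,n_2)$ into $(m_1+m_2,n_1+n_2)$ via a permutation of tensor legs, while closure under involution swaps $H$ and $\overline{H}$; so the $*$-algebra generated by $A_V$ lies inside $A_{V'}$. Conversely, since $F(x)$ is block-diagonal, a general $\omega'$ may be replaced by its block-diagonal part $\sum_{m,n}\omega'_{m,n}$ with $\sum\|\omega'_{m,n}\|_1\leq\|\omega'\|$; truncating this sum and approximating each $\omega'_{m,n}$ in trace norm by finite combinations of vector functionals with basic-tensor vectors yields, via $\|x\|\leq 1$ on $S$ and Cauchy--Schwarz, uniform approximation of $\ip{F(x)}{\omega'}$ on $S$ by elements of the $*$-algebra generated by $A_V$.

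For the second assertion: $A_{V'}$ is a $*$-subalgebra of $C(S)$ that separates the points of $S$ by the previous lemma (applied to $A_V\subseteq A_{V'}$), and when $0\notin S$, for each $x\in S$ there is $\xi\in H$ with $x\xi\neq0$, hence $\eta\in H$ with $(x\xi|\eta)\neq0$, so $A_V\subseteq A_{V'}$ vanishes at no point; Stone--Weierstrass then gives density in $C(S)$. I expect the main obstacle to be the closure-under-products step: the product $\ip{F(x)}{\omega'_1}\ip{F(x)}{\omega'_2}$ lives a priori as a matrix coefficient on $F(H)\otimes F(H)$, and one has to re-index it via the unitary equivalence $(H^{m_1\otimes}\otimes\overline{H}^{n_1\otimes})\otimes(H^{m_2\otimes}\otimes\overline{H}^{n_2\otimes})\cong H^{(m_1+m_2)\otimes}\otimes\overline{H}^{(n_1+n_2)\otimes}$, noting that permutations of tensor legs leave matrix coefficients of $x^{m\otimes}\otimes\overline{x}^{n\otimes}$ invariant, before recognising it as an element of $A_{V'}$.
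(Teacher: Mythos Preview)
Your proposal is correct and takes essentially the same approach as the paper: identify block coefficients of $F(x)$ as products of coefficients of $x$ and $\overline{x}$, conclude that $A_{V'}$ is a $*$-subalgebra containing the $*$-algebra generated by $A_V$ densely, and finish with Stone--Weierstrass via the no-common-zeros condition when $0\notin S$. The paper is much terser (it simply declares the density of the $*$-algebra generated by $A_V$ in $A_{V'}$ to be ``clear''), so your explicit truncation argument using the block-diagonal pinching of $\omega'$ is a useful elaboration rather than a genuine departure.
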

\begin{proof}
We find that coefficients of $H^{m\otimes}\otimes \overline{H}^{n\otimes}$
give adjoints of coefficients of $H^{n\otimes}\otimes \overline{H}^{m\otimes}$
and products of coefficients from $H^{m\otimes}\otimes \overline{H}^{n\otimes}$
and $H^{m'\otimes}\otimes \overline{H}^{n'\otimes}$ give coefficients of
$H^{(m+m')\otimes}\otimes \overline{H}^{(n+n')\otimes}$.  Thus $A_{V'}$
is a $*$-subalgebra of $C(S)$.  It's also clear that the $*$-algebra generated
by $A_V$ will be dense in $A_{V'}$.

By Stone-Weierstrass, $A_{V'}$ will be dense in $C(S)$ if $A_{V'}$
``strongly separates points'', \cite[Corollary~8.3, Chapter~V]{conway},
that is, if for each $x\in S$ there is $f\in A_{V'}$ with
$f(x)\not=0$.  But this is clear once we ensure that $0\not\in S$.
\end{proof}

We finally establish some further properties of $(A,\Phi,V)$.
For $\mu,\lambda\in A^*$
define $\ip{\mu\star\lambda}{a} = \ip{\Phi(a)}{\mu\otimes\lambda}$ for $a\in A$.

\begin{lemma}
The bilinear map $\star$ turns $A^*$ into a Banach algebra, and the map
$A^*\rightarrow\mc B(H); \mu\mapsto (\mu\otimes\id)(V)$ becomes a homomorphism
for this product.
\end{lemma}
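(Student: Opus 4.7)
The plan is to unwind the abstract product using the identification $SC(S\times S)\cong\mc{CB}_{w^*}(A^*,A^*;\mathbb C)$ established earlier. Since $\Phi(f)(s,t)=f(st)$, the definition of $\star$ rewrites as the explicit iterated integral
\[ \ip{\mu\star\lambda}{f} = \int_S\int_S f(st)\,d\mu(s)\,d\lambda(t) \qquad (f\in A,\ \mu,\lambda\in A^*). \]
Bilinearity is immediate from the integral form, and since $|f(st)|\leq\|f\|_\infty$, integrating twice against a complex measure gives $\|\mu\star\lambda\|\leq\|\mu\|\|\lambda\|$.

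For associativity, I would invoke the coassociativity identity $\Phi(f)(s,tu)=f(stu)=\Phi(f)(st,u)$ coming from associativity in $S$, and show that both $\ip{(\mu\star\lambda)\star\nu}{f}$ and $\ip{\mu\star(\lambda\star\nu)}{f}$ compute the triple integral $\int\int\int f(stu)\,d\mu(s)\,d\lambda(t)\,d\nu(u)$. For the first grouping, one integrates $u$ out against $\nu$ first to obtain the function $c\mapsto\int_S f(cu)\,d\nu(u)$, which lies in $C(S)$ by the same $SC(S\times S)$ duality applied to the separately continuous kernel $(c,u)\mapsto f(cu)$, and then applies the $\mu\star\lambda$ formula; the other grouping is symmetric. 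The main obstacle I anticipate lies precisely here: since $(s,t)\mapsto f(st)$ is only separately continuous, not jointly continuous, a naive Fubini is unavailable, and the cleanest justification of the interchange is to route through the isometric duality $SC(S\times S)\cong\mc B_{w^*}(M(S),M(S);\mathbb C)$ already recorded, which makes the iterated-integral manipulations transparent.

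Finally, the homomorphism property reduces to a direct Hilbert-space calculation. Setting $T_\mu=(\mu\otimes\id)(V)$, the defining formula for $V$ gives $(T_\mu\xi|\eta)=\int_S(x\xi|\eta)\,d\mu(x)$. Composing and expanding the inner factor via the same formula yields
\[ (T_\mu T_\lambda\xi|\eta) = \int_S(xT_\lambda\xi|\eta)\,d\mu(x) = \int_S\int_S(xy\xi|\eta)\,d\lambda(y)\,d\mu(x). \]
Now the function $f(x):=(x\xi|\eta)$ lies in $A=C(S)$, so applying the product formula for $\mu\star\lambda$ to this $f$ gives $(T_{\mu\star\lambda}\xi|\eta)=\int_S\int_S(st\xi|\eta)\,d\mu(s)\,d\lambda(t)$, matching the previous expression. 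Since the rank-one vector functionals $\omega_{\xi,\eta}$ separate $\mc B(H)$, this forces $T_{\mu\star\lambda}=T_\mu T_\lambda$ as operators.
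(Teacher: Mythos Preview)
Your approach is essentially the same as the paper's: both unwind the product as the iterated integral $\int_S\int_S f(st)\,d\mu(s)\,d\lambda(t)$, deduce associativity from associativity in $S$ together with a Fubini-type interchange (the paper cites \cite[Lemma~2.4]{runde1} for exactly this), and verify the homomorphism property by a direct coefficient calculation.

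One small point: in your final step you assert that the two double integrals ``match'', but they are written in opposite orders --- $(T_\mu T_\lambda\xi|\eta)=\int_S\int_S(xy\xi|\eta)\,d\lambda(y)\,d\mu(x)$ has $\lambda$ inner and $\mu$ outer, whereas $(T_{\mu\star\lambda}\xi|\eta)$ has $\mu$ inner and $\lambda$ outer. So a second Fubini interchange is needed here as well, just as in the associativity argument; the paper makes this explicit (``by another application of Fubini''). You clearly anticipated the issue, so this is only a matter of saying it again at the right moment.
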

\begin{proof}
By construction,
\[ \ip{\mu\star\lambda}{a} = \int_S \int_S a(xy) \ d\mu(x) \ d\lambda(y), \]
and so, by using the version of Fubini's Theorem which holds here, compare
\cite[Lemma~2.4]{runde1}, the associativity of the product on $S$ gives that
$\star$ is associative.  Then for $\omega\in\mc B(H)_*$,
\begin{align*} \ip{(\mu\star\lambda\otimes\id)(V)}{\omega}
&= \int_S\int_S (\id\otimes\omega)(V)(xy) \ d\mu(x) \ d\lambda(y)
= \int_S\int_S \ip{xy}{\omega} \ d\mu(x) \ d\lambda(y),
\end{align*}
while
\begin{align*} \ip{(\mu\otimes\id)(V)(\lambda\otimes\id)(V)}{\omega}
&= \int_S \ip{x (\lambda\otimes\id)(V)}{\omega} \ d\mu(x)
= \int_S \int_S \ip{xy}{\omega} \ d\lambda(y) \ d\mu(x),
\end{align*}
so by another application of Fubini, it follows that $(\mu\star\lambda\otimes\id)(V)
= (\mu\otimes\id)(V)(\lambda\otimes\id)(V)$ as required.
\end{proof}

\subsection{The non-commutative framework}

As promised, we can now formulate a certain class of generalised C$^*$-bialgebras
which, in the commutative case, completely capture the classes $\textsf{CSCH}$ and
$\textsf{CWCH}$.  We start with a definition, and then justify that this is
sensible.

\begin{definition}\label{defn:cstareb}
A $C^*$-Eberlein algebra is a quadruple $\mathbb S = (A,\Phi,V,H)$ where:
\begin{enumerate}
\item\label{defn:cstareb:one}
$A$ is a C$^*$-algebra, $\Phi:A\rightarrow A^{**}\vnten A^{**}$
is a unital $*$-homomorphism and $V\in A^{**}\vnten\mc B(H)$ is a contraction;
\item\label{defn:cstareb:two}
$A_V:=\{ (\iota\otimes\omega)(V) : \omega\in\mc B(H)_* \}$ is a subset
of $A\subseteq A^{**}$, and is norm dense in $A$;
\item\label{defn:cstareb:three}
The bilinear product on $A^*$ given by $\ip{\mu\star\nu}{a}
= \ip{\Phi(a)}{\mu\otimes\nu}$ is associative;
\item\label{defn:cstareb:four}
$V$ and $\Phi$ are connected in the sense that the map
$A^*\rightarrow\mc B(H); \mu\mapsto(\mu\otimes\iota)(V)$ is a homomorphism.
\end{enumerate}
\end{definition}

Recall that if $A$ is not unital, then by $\Phi$ being unital we mean,
for example, that the normal extension $\tilde\Phi:A^{**}\rightarrow
A^{**}\vnten A^{**}$ is unital (or equivalently that for any bounded approximate
identity $(e_\alpha)$ of $A$ the net $(\Phi(e_\alpha))$ converges weak$^*$ to
$1\in A^{**}\vnten A^{**}$).

\begin{remark}\label{rem:corep}
Axiom (\ref{defn:cstareb:four}) is exactly the statement that $V$ is a
``corepresentation'', here interpreted to mean that
\[ \Phi\big((\id\otimes\omega)(V)\big)
= (\id\otimes\id\otimes\omega)(V_{13}V_{23})
\qquad (\omega\in\mc B(H)_*). \]
This makes sense, as we can form $V_{13}V_{23}$ in $A^{**}\vnten A^{**}
\vnten \mc B(H)$.  Alternatively, if we let $\tilde\Phi:A^{**} \rightarrow
A^{**}\vnten A^{**}$ be the normal extension of $\Phi$, then this says precisely
that $(\tilde\Phi\otimes\id)(V) = V_{13} V_{23}$.

We also have that $A$ and $V$ uniquely determine $\Phi$.  Indeed, if $\Phi'$
is another coproduct satisfying (\ref{defn:cstareb:three}) and
(\ref{defn:cstareb:four}) then
\[ \Phi'\big((\id\otimes\omega)(V)\big)
= (\id\otimes\id\otimes\omega)(V_{13}V_{23})
= \Phi\big((\id\otimes\omega)(V)\big), \]
for all $\omega\in\mc B(H)_*$.  By (\ref{defn:cstareb:two}) and continuity,
it then follows that $\Phi'=\Phi$
\end{remark}

\subsection{Links with the Haagerup tensor product}
\label{sec:haatenprod}

In Section~\ref{sec:func_free} we carefully introduced to space
$\mc{CB}_{w^*}(A^*,A^*;\mathbb C)$ but in our axioms have made no mention
of it.  By using the Haagerup tensor product, studied in this short section,
we can show that actually automatically $\Phi$ takes values in 
$\mc{CB}_{w^*}(A^*,A^*;\mathbb C)$.

Let $A,B$ be $C^*$-algebras.
The \emph{extended (or weak$^*$) Haagerup tensor product} of $A$ with $B$
is the collection of $x\in A^{**} \vnten B^{**}$ with
\[ x = \sum_{i} x_i \otimes y_i \]
with $\sigma$-weak convergence, for some families $(x_i)$ in $A$ and
$(y_i)$ in $B$, such that $\sum_{i} x_i x_i^*$ and
$\sum_{i} y_i^* y_i$ converge $\sigma$-weakly in $A^{**}$ and $B^{**}$ respectively.
The natural norm is then
\[ \|x\|_{eh} = \inf\Big\{ \Big\|\sum_{i} x_i x_i^*\Big\|^{1/2}
\Big\|\sum_{i} y_i^* y_i\Big\|^{1/2} :
x = \sum_{i} x_i\otimes y_i \Big\}, \]
and we write $A \otimes^{eh} B$ for the resulting normed space.
See \cite{bsm} or \cite{er2} for further details.

Continuing with the notation of the previous section, 
let $(e_i)$ be an orthogonal basis for $H$, so that for $\mu,\mu'\in A^*$
and $\xi,\eta\in H$,
\begin{align*}
= ((\mu\otimes\iota)(V)(\mu'\otimes\iota)(V)\xi|\eta)
&= \sum_i ((\mu\otimes\id)(V)e_i|\eta) ((\mu'\otimes\id)(V)\xi|e_i) \\
&= \sum_i \ip{\mu\otimes\mu'}{(\id\otimes\omega_{e_i,\eta})(V) \otimes
(\id\otimes\omega_{\xi,e_i})(V)}.
\end{align*}
Now, the sum
\[ \sum_i (\id\otimes\omega_{e_i,\eta})(V) \otimes
(\id\otimes\omega_{\xi,e_i})(V) \]
converges in the extended Haagerup tensor product $A\ehten A$.
Furthermore, by conditions (\ref{defn:cstareb:two}), (\ref{defn:cstareb:three})
and (\ref{defn:cstareb:four}), this is equal to
\[ \ip{\Phi((\iota\otimes\omega_{\xi,\eta})(V))}{\mu\otimes\mu'}, \]
and so we conclude that
\[ \Phi((\iota\otimes\omega_{\xi,\eta})(V))
= \sum_i (\id\otimes\omega_{e_i,\eta})(V) \otimes
(\id\otimes\omega_{\xi,e_i})(V). \]
In particular, this shows that $\Phi$ does indeed take values in
$\mc{CB}_{w^*}(A^*,A^*;\mathbb C)$.

\begin{remark}
We would really like to understand
the notion of ``separately continuous'' in the non-commutative setting;
in the previous section we used \cite{runde1} to setup an isomorphism between
$SC(S\times S)$ and $\mc{CB}_{w^*}(A^*,A^*;\mathbb C)$.
Indeed, it would be tempting to believe that maybe we could use the
extended Haagerup tensor product in place of $\mc{CB}_{w^*}(A^*,A^*;
\mathbb C)$.  This appears to be too general a notion however; see
Remark~\ref{rem:haa_no_good} below.
\end{remark}

\subsection{Commutative $C^*$-Eberlein algebras are classical semigroups}

\begin{theorem}\label{thm:comm_unital_case}
For any $S\in\textsf{CWCH}$ or $S\in\textsf{CSCH}$, with $0\not\in S$, we
can canonically construct a $C^*$-Eberlein algebra
$S=(C(S),\Phi_S,V_S,H_S)$.
Conversely, if $\mathbb S=(A,\Phi,V,H)$ with $A$ unital and commutative,
then there is $S\in\textsf{CWCH}$ with $\mathbb S=S$ and $0\not\in S$.
Furthermore, we have that $V_S\in M(A\otimes\mc B_0(H))$ if and only if
$S\in\sch$.
\end{theorem}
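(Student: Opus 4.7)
For the forward direction, given $S\subseteq\mc B(H)_{\|\cdot\|\leq 1}$ (in either $\textsf{CWCH}$ or $\textsf{CSCH}$) with $0\notin S$, I would take $A=C(S)$ and define $\Phi:A\to A^{**}\vnten A^{**}$ by composing $f\mapsto\big((s,t)\mapsto f(st)\big)$ with the inclusions $SC(S\times S)\hookrightarrow\mc{CB}_{w^*}(A^*,A^*;\mathbb{C})\subseteq A^{**}\vnten A^{**}$, and then pass from $H$ to $F(H)$ to obtain $V'$ as in Lemma~\ref{lem:dense_subalg}. Setting $H_S=F(H)$ and $V_S=V'$, density of $A_{V_S}$ in $A$ is Lemma~\ref{lem:dense_subalg} (using $0\notin S$). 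Axiom~(\ref{defn:cstareb:one}) is immediate, and axioms~(\ref{defn:cstareb:three}) and~(\ref{defn:cstareb:four}) are established in the last lemma of Section~\ref{sec:func_free}, whose arguments transfer verbatim to $V'$.

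For the converse, write $A=C(X)$ via Gelfand duality and define $\pi:X\to\mc B(H)_{\|\cdot\|\leq 1}$ by $\pi(x)=(\delta_x\otimes\id)(V)$. Weak$^*$-continuity is immediate from $\omega(\pi(x))=\delta_x((\id\otimes\omega)(V))$, while axiom~(\ref{defn:cstareb:two}) gives injectivity; since $X$ is compact and the target Hausdorff, $\pi$ is a homeomorphism onto its image $S:=\pi(X)$, and $1\in A$ together with density of $A_V$ forces $0\notin S$. The crux is to give $S$ a semigroup structure, for which axiom~(\ref{defn:cstareb:four}) reduces the task to showing that $\delta_x\star\delta_y$ is a character of $A$. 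The plan is to exploit that $A^{**}$ is a commutative von Neumann algebra, so each $\delta_x$ extends to a normal character on $A^{**}$ (multiplicativity of the extension follows from weak$^*$-continuity of the Arens product together with density of $A$), and hence $\delta_x\otimes\delta_y$ is a normal character of the commutative $A^{**}\vnten A^{**}$ (verify on elementary tensors and extend by weak$^*$-continuity of multiplication). Since $\Phi$ is a $*$-homomorphism, this yields
\[ (\delta_x\star\delta_y)(ab)=\ip{\Phi(a)\Phi(b)}{\delta_x\otimes\delta_y}=(\delta_x\star\delta_y)(a)\,(\delta_x\star\delta_y)(b), \]
so $xy:=\delta_x\star\delta_y$ lies in $X$. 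Associativity of this product is axiom~(\ref{defn:cstareb:three}); separate continuity follows from $\Phi$ taking values in $\mc{CB}_{w^*}(A^*,A^*;\mathbb C)$ (Section~\ref{sec:haatenprod}), giving $x\mapsto f(xy)=\delta_x((\id\otimes\delta_y)(\Phi(f)))$ continuous. Axiom~(\ref{defn:cstareb:four}) then makes $\pi$ a semigroup homomorphism, so $S\in\textsf{CWCH}$, and applying the forward construction to $S$ recovers $(A,\Phi,V,H)=\mathbb S$ (here $A_V$ is already norm-dense in $A$, so no further modification of $H$ is required, and $V=V_S$ follows from both sides agreeing under every pairing $\delta_x\otimes\omega$ with $\omega\in\mc B(H)_*$).

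For the last clause, the ``only if'' direction is the first lemma of Section~\ref{sec:func_free} applied to $F(S)\subseteq\mc B(H_S)$, noting that $F$ is a homeomorphism in both the weak$^*$ and $\text{SOT}^*$ topologies. Conversely, if $V_S\in M(A\otimes\mc B_0(H_S))$, the computation used in that lemma's proof shows that $(f\otimes\theta_{\xi,\eta})V_S$ and $V_S(f\otimes\theta_{\xi,\eta})$ lie in $C(S,\mc B_0(H_S))$, forcing $x\mapsto f(x)\theta_{x\xi,\eta}$ and $x\mapsto f(x)\theta_{\xi,x^*\eta}$ to be norm-continuous; choosing $f$ non-vanishing on a neighbourhood of a prescribed $x_0$ and localising, one deduces norm-continuity of $x\mapsto x\xi$ and $x\mapsto x^*\eta$ on a neighbourhood of $x_0$ for every $\xi,\eta\in H_S$, and covering $S$ by such neighbourhoods shows $S\in\sch$. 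The principal conceptual obstacle throughout is the character verification in the converse: once one recognises that the correct vehicle is the normal extension of $\delta_x$ to $A^{**}$ (and thence to the commutative vN tensor product), everything else is a routine calculation.
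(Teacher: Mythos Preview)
Your proof is correct, but in two places you take a different (more hands-on) route than the paper.

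For the converse, the paper does not verify that $\delta_x\star\delta_y$ is a character via normal extensions. Instead it invokes Section~\ref{sec:haatenprod} at the outset: since $\Phi$ takes values in $\mc{CB}_{w^*}(A^*,A^*;\mathbb C)$, and in the commutative case this space \emph{is} $SC(S\times S)$ (via the Runde--Grothendieck--Johnson isomorphism recalled in Section~\ref{sec:func_free}), one immediately has $\Phi(f)\in SC(S\times S)$ for every $f$. The product $f(st):=\Phi(f)(s,t)$ is then well-defined as a pointwise evaluation, multiplicativity of $s\star t$ follows because pointwise product is the algebra structure on $SC(S\times S)$ and $\Phi$ is a $*$-homomorphism, and separate continuity is built into the definition of $SC$. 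Your route---extending $\delta_x$ to a normal character on $A^{**}$ and then $\delta_x\otimes\delta_y$ to one on $A^{**}\vnten A^{**}$---is perfectly valid (Arens regularity of $C^*$-algebras makes the iterated-limit argument go through), and has the virtue of avoiding the measure-theoretic identification of $SC(S\times S)$; the paper's route is shorter because it gets the product and its separate continuity in a single stroke.

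For the final clause, the paper again argues more conceptually: it uses the identification $M(C(S)\otimes\mc B_0(H))\cong C^b_{SOT^*}(S,\mc B(H))$, observes that $V_S$ corresponds under this to the inclusion $S\hookrightarrow\mc B(H)_{\|\cdot\|\leq 1}$, and concludes that $V_S$ being a multiplier forces this inclusion to be $SOT^*$-continuous, whence (by compactness) a homeomorphism onto its image. Your localisation argument via $x\mapsto f(x)\theta_{x\xi,\eta}$ reaches the same conclusion but with more bookkeeping.
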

\begin{proof}
The first claim follows immediately from the construction in
Section~\ref{sec:func_free}.  Conversely, let $(A,\Phi,V,H)$ be a
$C^*$-Eberlein algebra with $A=C(S)$ for some compact Hausdorff space $S$.
By the previous section, we know that
\[ \Phi:C(S)\rightarrow\mc{B}_{w^*}(M(S),M(S);\mathbb C)=SC(S\times S). \]
So we get a product on $S$ by
\[ f(st) = \ip{\Phi(f)}{\delta_s\otimes\delta_t} \qquad
(s,t\in S,f\in C(S)), \]
and for all $f\in C(S)$ the map $(s,t)\mapsto f(st)$ is separately continuous.
So by condition (\ref{defn:cstareb:three}), $S$ is a semitopological semigroup
and $\Phi=\Phi_S$.

Define $\pi:S\rightarrow\mc B(H)_{\|\cdot\|\leq 1}$ by $\pi(s)
= (\delta_s\otimes\iota)(V)$.  By (\ref{defn:cstareb:four}) we see that
$\pi$ is a homomorphism, and from (\ref{defn:cstareb:two}) it follows that
$\pi$ is injective.  Furthermore, if
$s_\alpha\rightarrow s$ in $S$, then by (\ref{defn:cstareb:two}),
\[ \lim_\alpha \ip{\pi(s_\alpha)}{\omega}
= \lim_\alpha \ip{\delta_{s_\alpha}}{(\id\otimes\omega)(V)}
= \ip{\delta_s}{(\id\otimes\omega)(V)}
= \ip{\pi(s)}{\omega}. \qquad (\omega\in\mc B(H)_*). \]
So $\pi$ is continuous if we give $\mc B(H)_{\|\cdot\|\leq 1}$ the
relative weak$^*$-topology.  As $S$ is compact it
follows that $\pi:S\rightarrow\pi(S)$ is a homeomorphism, if we give $\pi(S)$ the
relative weak$^*$-topology.  We henceforth identify $S$ and $\pi(S)$.
It follows that
\[ \ip{V}{\mu\otimes\omega} = \ip{\mu}{(\id\otimes\omega)(V)}
= \int_S (\id\otimes\omega)(V)(x) \ d\mu(x)
= \int_S \ip{\pi(x)}{\omega} \ d\mu(x), \]
and so $V=V_S$ the canonical corepresentation.
Condition (\ref{defn:cstareb:two}) ensures that $0\not\in S$.

If $S\in\sch$ then $V_S\in M(A\otimes\mc B_0(H))$.  Conversely, if
$V_S\in M(A\otimes\mc B_0(H)) \cong C^b_{SOT^*}(S,\mc B(H))$ then as $V_S$
is the inclusion map $S\rightarrow\mc B(H)_{\|\cdot\|\leq 1}$, it follows that
$(S,\text{weak}^*) \rightarrow (\mc B(H)_{\|\cdot\|\leq 1},SOT^*)$ is
continuous.  So $S$ is SOT$^*$ compact, hence closed, and $S\rightarrow
(\mc B(H)_{\|\cdot\|\leq 1},SOT^*)$ is a homeomorphism onto its range,
as required.
\end{proof}

Let us call $V$ the ``generating corepresentation'' for $\mathbb S$.
We note that $V$ is not unique (the above proof suggests that if $\mathbb S$
is classical, $A=C(S)$ for $S\in\textsf{CWCH}$, then $V$ is always the
``canonical'' $V_S$; but this is only after we have identified $S$ with $\pi(S)$,
where $\pi$ is given by $V$).  Indeed, if $u:H\rightarrow K$ is a unitary,
then $(1\otimes u^*)V(1\otimes u)$ will also generate $(A,\Phi)$.  Slightly
less trivially, if $U\in A^{**}\vnten\mc B(K)$ is such that
$(\id\otimes\omega)(U)\in A$ for all $\omega\in \mc B(K)_*$ and
$\Phi((\id\otimes\omega)(U)) = (\id\otimes\id\otimes\omega)(U_{13}U_{23})$,
then $U\oplus V$ will also be generating.
In particular, we can take direct sums of $V$ with itself.

\begin{remark}
Our philosophy will be that $(A,\Phi)$ is the primary object, and
we assume the existence of \emph{some} $V$.  Axiom (2) just says that $V$
is ``sufficiently large'' in some sense.
\end{remark}

\begin{remark}
We can make use of this to weaken (2).  If all we have is that
$(\id\otimes\omega)(V)\in A$ for all $\omega$, then let $U = V \oplus
V_{12} V_{13} \oplus V_{12}V_{13}V_{14} \oplus \cdots$ on $H\oplus H\otimes H
\oplus H^{\otimes 3} \oplus \cdots$.  As $(\id\otimes\omega\otimes\omega')
(V_{12}V_{13}) = (\id\otimes\omega)(V)(\id\otimes\omega')(V)$, and so forth,
it follows that $A_U$ will be the subalgebra of $A$ generated by $A_V$.
As $\Phi$ is a homomorphism, it's easy to see that $V$ will also be a
corepresentation.  Thus, it is enough that there exists $V$ with
$(\id\otimes\omega)(V)\in A$ for all $\omega$ and such that the closure of
$A_V$ is $*$-closed--- once we have this, we can form $U$ and get $A_U$ to
be a $*$-algebra, which we then require to be dense.
\end{remark}

\subsection{The locally compact case}
\label{sec:loc_cmpt_case}

Let $S$ be a merely \emph{locally} compact, semitopological semigroup.  Then
translates need not leave $C_0(S)$ invariant: for example, if $0\in S$ is a
distinguished point and $S$ has the ``zero multiplication'', $st=0$ for all
$s,t\in S$, then if $S$ is non-compact, for any $t\in S$, $f(ts)=f(st)=f(0)$
for all $s$, so $f(t\cdot) = f(\cdot t) = f(0)1 \not\in C_0(S)$
whenever $f(0)\not=0$.

If we want translations to act on $C_0(S)$ then we need left and right
translations to be \emph{proper} maps (inverse image of compact is compact).
Equivalently, if $(s_\alpha)$ is a net in $S$ tending to infinity (meaning
that for all compact $K\subseteq S$ there is $\alpha_0$ so that
$s_\alpha\not\in K$ for $\alpha\leq\alpha_0$) then also $(ts_\alpha)$
and $(s_\alpha t)$ converge to $\infty$.  Furthermore, once we have this
condition, it is not hard to check that the one-point compactification,
$S_\infty$, becomes semitopological for the product $\infty s = s\infty
= \infty$ for all $s\in S$.

\begin{proposition}
Let $(A,\Phi,V,H)$ be a $C^*$-Eberlein algebra with $A$ commutative and
non-unital.  There is a locally compact, non-compact, $S\in\wch$ with
$A=C_0(S)$ and $\Phi,V,H$ being the canonical choices.  Furthermore,
the one-point compactification $S_\infty$ is canonically a semitopological
semigroup with $S_\infty\in\textsf{CWCH}$.
\end{proposition}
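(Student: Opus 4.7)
The plan is to recycle the proof of Theorem~\ref{thm:comm_unital_case} but work throughout with the one-point compactification of $S$. As $A$ is commutative and non-unital, I write $A = C_0(S)$ for some locally compact, non-compact Hausdorff space $S$, set $S_\infty = S\cup\{\infty\}$, and identify each $s\in S$ with the character $\delta_s\in A^*$ (and $\infty$ with $\delta_\infty := 0$). Since everything in sight is commutative, the normal extensions $\delta_s^{**}\otimes\delta_t^{**}$ are characters of $A^{**}\vnten A^{**}$, so
\[ \delta_s\star\delta_t := (\delta_s^{**}\otimes\delta_t^{**})\circ\Phi \]
is a character of $A$; it is either $0$ or $\delta_r$ for a unique $r\in S$. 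I define $s\cdot t \in S_\infty$ accordingly, with $\infty$ absorbing. Axiom~(\ref{defn:cstareb:three}) immediately yields associativity.

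For separate continuity on $S_\infty$, I would use the key fact from Section~\ref{sec:haatenprod} that $\Phi$ takes values in $\mc{CB}_{w^*}(A^*,A^*;\mathbb C)$, so that $\mu\mapsto\mu\star\nu$ is weak$^*$-weak$^*$-continuous on $A^*$ for each fixed $\nu\in A^*$. A net $s_\alpha\to s$ in $S_\infty$ is precisely $\delta_{s_\alpha}\to\delta_s$ weak$^*$ in $A^*$, by the standard description of the one-point compactification topology via $C_0(S)$. Hence $\delta_{s_\alpha}\star\delta_t\to\delta_s\star\delta_t$ weak$^*$, which translates back to $s_\alpha\cdot t\to s\cdot t$ in $S_\infty$; the other variable is symmetric.

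To embed $S_\infty$ in $\textsf{CWCH}$, I define $\pi:S_\infty\to\mc B(H)_{\|\cdot\|\leq 1}$ by $\pi(s) = (\delta_s\otimes\iota)(V)$ for $s\in S$ and $\pi(\infty) = 0$. Axiom~(\ref{defn:cstareb:four}) gives $\pi(s\cdot t) = \pi(s)\pi(t)$ throughout, including at infinity (both sides being $0$ when $s\cdot t = \infty$). Axiom~(\ref{defn:cstareb:two}) yields injectivity on $S$: two evaluations agreeing on $A_V$ agree on $A$ by density, and it rules out $\pi(s) = 0$ for $s\in S$. Weak$^*$-continuity of $\pi$ is immediate because each coefficient $(\iota\otimes\omega)(V)\in A = C_0(S)$ vanishes at $\infty$. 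Compactness of $S_\infty$ plus continuity plus injectivity into the Hausdorff target forces $\pi$ to be a homeomorphism onto its image, so $S_\infty\in\textsf{CWCH}$. Restricting $\pi$ to $S$ places $S$ into $\wch$, and the identification $S\cong\pi(S)$ shows that $A = C_0(S)$ and $\Phi, V, H$ coincide with the canonical objects built in Section~\ref{sec:func_free}.

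The only genuinely new point compared with Theorem~\ref{thm:comm_unital_case} is separate continuity at the point at infinity: this is what the extended Haagerup analysis of Section~\ref{sec:haatenprod} buys us, because separate continuity on $S$ alone does not automatically pass through the ``zero'' boundary character. Once that is secured, checking that $\pi$ remains a homomorphism at $\infty$ (using that $\delta_\infty=0$ plays well with Axiom~(\ref{defn:cstareb:four})) is the only other place where one must be mildly careful; everything else is a faithful re-run of the unital argument.
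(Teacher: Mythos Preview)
Your argument is essentially correct and close in spirit to the paper's, but the organisation differs and there is one small gap.

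\textbf{Comparison.} The paper first observes directly from Axiom~(\ref{defn:cstareb:two}) that $s_\alpha\to\infty$ implies $\ip{\pi(s_\alpha)}{\omega\pi(s)}\to 0$ (because $(\id\otimes\omega\pi(s))(V)\in C_0(S)$), and reads this as saying that left and right translations on $S$ are proper maps; this gives $S_\infty$ its semitopological structure. You instead invoke Section~\ref{sec:haatenprod} (equivalently, the dual Banach algebra property of Proposition~\ref{prop:get_dba}) to get separate weak$^*$-continuity of $\star$ on $A^*$, and then pull this back through the identification $S_\infty\hookrightarrow A^*$. These are the same fact viewed from two angles: ``$(\id\otimes\mu)\Phi(a)\in A$'' is exactly ``$a\cdot\mu\in C_0(S)$'', which is exactly the paper's properness computation. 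Your packaging is slightly more abstract; the paper's is slightly more hands-on.

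\textbf{The gap.} You allow $\delta_s\star\delta_t$ to be zero, setting $s\cdot t=\infty$ in that case, and this is fine for making $S_\infty$ a semigroup. But you then assert ``restricting $\pi$ to $S$ places $S$ into $\wch$'', which presupposes that $S$ itself is closed under the product. You have not checked this. It does hold: since $\Phi$ is unital, $\delta_s\star\delta_t$ is a multiplicative \emph{state} on $A$ (its normal extension to $A^{**}$ sends $1$ to $\ip{\delta_s\otimes\delta_t}{\tilde\Phi(1)}=1$), hence a nonzero character, hence evaluation at a point of $S$. Add this one line and your proof is complete.
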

\begin{proof}
We could follow the proof of Theorem~\ref{thm:comm_unital_case}, using
\cite{runde1} to again identify $SC_0(S\times S)$.  However, it is not
now clear that $\pi:S\rightarrow\mc B(H)_{\|\cdot\|\leq 1}$ will be a
homeomorphism, as $S$ is not compact.

However, notice that axiom (\ref{defn:cstareb:two}) tells us that the coefficients
$s\mapsto \ip{\pi(s)}{\omega}$ are in $C_0(S)$ for any $\omega\in\mc B(H)_*$.
So if $s_\alpha\rightarrow\infty$ then $\ip{\pi(ss_\alpha)}{\omega}
= \ip{\pi(s_\alpha)}{\omega\pi(s)} \rightarrow 0$ and similarly 
$\ip{\pi(s_\alpha s)}{\omega} \rightarrow 0$.  As such functions are norm-dense
in $C_0(S)$ this shows that $f(ss_\alpha)\rightarrow 0, f(s_\alpha s)
\rightarrow 0$ for all $s\in S,f\in C_0(S)$.  It follows that translates in
$S$ are proper maps, and so we can turn $S_\infty$ into a semitopological
semigroup in the canonical way.

Furthermore, this argument also shows that $s_\alpha\rightarrow\infty$
in $S$ implies that $\pi(s_\alpha)\rightarrow 0$ weak$^*$ in $\mc B(H)$.
Hence $\pi_\infty:S_\infty\rightarrow\mc B(H)_{\|\cdot\|\leq 1};
S\ni s\mapsto \pi(s), \infty\mapsto 0$ defines an injective, continuous
semigroup homomorphism.  As $S_\infty$ is compact, this is a homeomorphism.
Once again, this gives $0\in\pi_\infty(S_\infty)$, but we can follow the
construction before Lemma~\ref{lem:dense_subalg} and replace $H$ by $H\oplus
\mathbb C$ and replace $0$ by $0\oplus 1_{\mathbb C}$.

Now suppose that $(s_\alpha)$ is a net in $S$ with $\pi(s_\alpha) \rightarrow
\pi(s)$ for some $s\in S$.  Then $\pi_\infty(s_\alpha) \rightarrow
\pi_\infty(s)$ and so $s_\alpha\rightarrow s$ in $S_\infty$, and so
$s_\alpha\rightarrow s$ in $S$.  Thus $\pi$ is a homeomorphism, and so we
can indeed follow the rest of the proof of Theorem~\ref{thm:comm_unital_case}.
\end{proof}

\begin{remark}\label{rem:trivial_compact}
In fact, there is an analogous construction in the non-commutative setting,
which we can view as a way to ``compactify'' in a minimal sense;
see Section~\ref{sec:cmpts} for further details.
Indeed, given $(A,\Phi,V,H)$ with $A$ not unital, consider $B=A\oplus
\mathbb C1$ the unitisation, made into a $C^*$-algebra in the usual way
(e.g.\ consider $B \subseteq M(A)$ to obtain a $C^*$-norm).  So $A$ is a
co-dimension one ideal in $B$, and hence $B^{**} = A^{**} \oplus 1_B$.
One concrete way to see this is to let $A\subseteq\mc B(K)$ be the universal
representation, so $A^{**}\cong A''\subseteq\mc B(K)$.  Then let $A$ act on
$\mc B(K\oplus\mathbb C)$ as $a(\xi\oplus t) = a\xi\oplus 0$ for $a\in A,
\xi\in K,t\in\mathbb C$, and let $1_B$ act as the identity on $K\oplus
\mathbb C$.  Thus we obtain a $*$-representation of $B$, and under
$B^{**}\cong A^{**}\oplus\mathbb C1_B$, we see that $B^{**}$ acts on $K\oplus
\mathbb C$ in exactly the same way.

Define $\Phi_B$ in the obvious way (so $\Phi_B(1)=1\otimes 1$).
Let $H_B=H\oplus\mathbb C$, so we may identify $A^{**}\vnten\mc B(H)$ and
$1_B \vnten \mathbb C$ as subalgebras
of $B^{**}\vnten\mc B(H_B)$.  Under this, let $V_B = V \oplus
1_B\otimes 1_{\mathbb C}$, so $V_B$ is a contraction.
So axioms (\ref{defn:cstareb:one}) and (\ref{defn:cstareb:three}) hold.
Then slices of $V_B$ give us slices of $V$ and scalar multiples of $1_B$,
from which axiom (\ref{defn:cstareb:two}) follows.
As $B^* \cong A^* \oplus \mathbb C$, it
is easy to verify axiom (\ref{defn:cstareb:four}).

We remark that $V_B$ is never unitary (even if $V$ is) and that $V_B$ might
fail to be a multiplier of $B\otimes \mc B_0(H_B)$ even if $V\in
M(A\otimes\mc B_0(H))$ (performing the calculation, you end up needing that
also $V(1\otimes\theta) \in B\otimes\mc B_0(H)$ for all $\theta\in\mc B_0(H)$).

Then, if $A=C_0(S)$, then this abstract construction is exactly the same as
forming $S_\infty$ as in the previous proof, followed by the procedure to
ensure $0\not\in S_\infty$.  (A little care is required in this calculation,
as while $C(S_\infty) \cong C_0(S) \oplus \mathbb C 1$ and hence
$C(S_\infty)^* \cong M(S) \oplus \mathbb C$ this splitting is not naively as
$M(S)\oplus\mathbb C \delta_\infty$.)

Notice that even if $S\in\sch$, it's unlikely that
$S_\infty\in\textsf{CSCH}$; indeed, we'd need that $s_\alpha\rightarrow\infty$
in $S$ implied that $s_\alpha\rightarrow 0$ in the SOT$^*$.
\end{remark}

Thus, in the commutative setting, $C^*$-Eberlein algebras give precisely
\emph{compact} $S\in\wch$ or $S\in\sch$, or give locally compact $S$ such
that $S_\infty$ becomes semitopological.  (Equivalently, those locally
compact semigroups obtained by removing a ``semigroup zero'' from a member of
$\textsf{CWCH}$).

\subsection{Morphisms}\label{sec:semigps:morphisms}

Having now defined the objects we wish to study, we now look at making these
objects into a category; our motivation here is to eventually build a
``compactifcation'' theory.

\begin{definition}
For $i=1,2$ let $\mathbb S_i=(A_i,\Phi_i,V_i,H_i)$ be C$^*$-Eberlein algebras.
A morphism $\mathbb S_1 \rightarrow \mathbb S_2$ is a non-degenerate
$*$-homomorphism $\theta:A_2\rightarrow M(A_1)$ which ``intertwines the
coproducts'', in the weak sense that $\theta^*:A_1^*\rightarrow A_2^*$
is a homomorphism.
\end{definition}

\begin{lemma}\label{lem:mor_bet_cstareb}
With $\mathbb S_1,\mathbb S_2$ as above, and $\theta:A_2\rightarrow M(A_1)$
a non-degenerate $*$-homomorphism, the following are equivalent:
\begin{enumerate}
\item\label{lem:mor_bet_cstareb:one}
$\theta$ is a morphism;
\item\label{lem:mor_bet_cstareb:two}
regard $M(A_1)\subseteq A_1^{**}$ and hence $\theta:A_2\rightarrow
A_1^{**}$ is a homomorphism; let $\tilde\theta:A_2^{**}\rightarrow
A_1^{**}$ be the normal extension.  Similarly let $\tilde\Phi_1:A_1^{**}
\rightarrow A_1^{**}\vnten A_1^{**}$ be the normal extension.
Then we have that $\tilde\Phi_1\circ\theta =
(\tilde\theta\otimes\tilde\theta)\circ\Phi_2$;
\item\label{lem:mor_bet_cstareb:three}
the map $A_1^* \rightarrow \mc B(H_2); \mu\mapsto
(\mu\circ\tilde\theta\otimes\id)(V_2)$ is a homomorphism;
\end{enumerate}
\end{lemma}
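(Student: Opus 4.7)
The plan is to prove the cycle (\ref{lem:mor_bet_cstareb:one}) $\Leftrightarrow$ (\ref{lem:mor_bet_cstareb:two}) $\Leftrightarrow$ (\ref{lem:mor_bet_cstareb:three}) by unwinding each condition into a pairing identity and matching them. For (\ref{lem:mor_bet_cstareb:one}) $\Leftrightarrow$ (\ref{lem:mor_bet_cstareb:two}), first identify $\theta^*(\mu) = \mu\circ\tilde\theta$ as elements of $A_2^*$ for $\mu\in A_1^*$, since both define the same bounded functional on $A_2$. Next, observe that the prescription $\ip{\mu\star_1\nu}{a} = \ip{\Phi_1(a)}{\mu\otimes\nu}$ (valid for $a\in A_1$) extends by normality to
\[
\ip{\mu\star_1\nu}{y} = \ip{\tilde\Phi_1(y)}{\mu\otimes\nu} \qquad (y\in A_1^{**}),
\]
because both sides are weak$^*$-continuous in $y$ and agree on the weak$^*$-dense subspace $A_1$. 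Applying this to $y=\theta(a)\in M(A_1)\subseteq A_1^{**}$, the equation $\theta^*(\mu\star_1\nu) = \theta^*(\mu)\star_2\theta^*(\nu)$, tested on $a\in A_2$, becomes
\[
\ip{\tilde\Phi_1(\theta(a))}{\mu\otimes\nu} = \ip{\Phi_2(a)}{(\mu\circ\tilde\theta)\otimes(\nu\circ\tilde\theta)} = \ip{(\tilde\theta\otimes\tilde\theta)(\Phi_2(a))}{\mu\otimes\nu},
\]
and simple tensors $\mu\otimes\nu$ separate points of $A_1^{**}\vnten A_1^{**}$, so this is equivalent to (\ref{lem:mor_bet_cstareb:two}).

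For (\ref{lem:mor_bet_cstareb:two}) $\Leftrightarrow$ (\ref{lem:mor_bet_cstareb:three}), set $W := (\tilde\theta\otimes\id)(V_2) \in A_1^{**}\vnten\mc B(H_2)$. By normality of $\tilde\theta$ combined with axiom (\ref{defn:cstareb:two}) for $\mathbb S_2$, we have $(\id\otimes\omega)(W) = \theta((\id\otimes\omega)(V_2)) \in M(A_1)$, so (\ref{lem:mor_bet_cstareb:three}) says exactly that $\mu\mapsto(\mu\otimes\id)(W)$ is a homomorphism $(A_1^*,\star_1)\to \mc B(H_2)$. Re-running the orthonormal-basis computation of Section~\ref{sec:haatenprod} with $W$ in place of $V$ — which only uses the general identity $((\mu\otimes\id)(W)(\nu\otimes\id)(W)\xi|\eta) = \sum_i \ip{\mu\otimes\nu}{(\id\otimes\omega_{e_i,\eta})(W)\otimes(\id\otimes\omega_{\xi,e_i})(W)}$ together with the extension of $\mu\star_1\nu$ noted above — shows that (\ref{lem:mor_bet_cstareb:three}) is equivalent, via Remark~\ref{rem:corep}, to the corepresentation identity $(\tilde\Phi_1\otimes\id)(W) = W_{13}W_{23}$.

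Finally bridge this to (\ref{lem:mor_bet_cstareb:two}). Applying $\tilde\theta\otimes\tilde\theta\otimes\id$ to the corepresentation identity $(\tilde\Phi_2\otimes\id)(V_2) = (V_2)_{13}(V_2)_{23}$ for $V_2$ gives $((\tilde\theta\otimes\tilde\theta)\circ\tilde\Phi_2\otimes\id)(V_2) = W_{13}W_{23}$, while trivially $(\tilde\Phi_1\otimes\id)(W) = (\tilde\Phi_1\circ\tilde\theta\otimes\id)(V_2)$. Hence (\ref{lem:mor_bet_cstareb:three}) is equivalent to
\[
((\tilde\Phi_1\circ\tilde\theta)\otimes\id)(V_2) = (((\tilde\theta\otimes\tilde\theta)\circ\tilde\Phi_2)\otimes\id)(V_2).
\]
Slicing the $\mc B(H_2)$-leg by arbitrary $\omega\in\mc B(H_2)_*$ yields agreement of the two bounded $*$-maps $\tilde\Phi_1\circ\tilde\theta$ and $(\tilde\theta\otimes\tilde\theta)\circ\tilde\Phi_2$ on every element of the form $(\id\otimes\omega)(V_2)$, i.e.\ on the set $A_{V_2}$. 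The main (and essentially only) delicate point is the upgrade from $A_{V_2}$ to all of $A_2$: this uses norm-density of $A_{V_2}$ in $A_2$ from axiom~(\ref{defn:cstareb:two}) for $\mathbb S_2$ combined with norm-continuity of both $*$-homomorphisms, giving (\ref{lem:mor_bet_cstareb:two}). The reverse implication (\ref{lem:mor_bet_cstareb:two}) $\Rightarrow$ (\ref{lem:mor_bet_cstareb:three}) is just reading the same chain of identities backwards, and needs no density argument.
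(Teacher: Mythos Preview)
Your proof is correct and uses the same ingredients the paper invokes (axiom~(\ref{defn:cstareb:two}) and Remark~\ref{rem:corep}), just spelled out in full. One minor difference of organisation: the paper links (\ref{lem:mor_bet_cstareb:one}) and (\ref{lem:mor_bet_cstareb:three}) directly, which is slightly shorter than your route through (\ref{lem:mor_bet_cstareb:two}). Indeed, since axiom~(\ref{defn:cstareb:four}) for $\mathbb S_2$ already says that $\lambda\mapsto(\lambda\otimes\id)(V_2)$ is a homomorphism $A_2^*\to\mc B(H_2)$, condition~(\ref{lem:mor_bet_cstareb:three}) is precisely the statement that the composite $\mu\mapsto\theta^*(\mu)\mapsto(\theta^*(\mu)\otimes\id)(V_2)$ is a homomorphism; so (\ref{lem:mor_bet_cstareb:one})$\Rightarrow$(\ref{lem:mor_bet_cstareb:three}) is immediate, and the converse follows by testing $\theta^*(\mu\star_1\nu)=\theta^*(\mu)\star_2\theta^*(\nu)$ on $A_{V_2}$ and using density, without ever forming $W$ or its corepresentation identity. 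Your detour through $W$ and $(\tilde\Phi_1\otimes\id)(W)=W_{13}W_{23}$ is perfectly valid but does a little more work than is strictly needed.
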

\begin{proof}
That (\ref{lem:mor_bet_cstareb:one}) and (\ref{lem:mor_bet_cstareb:two})
are equivalent is routine.  That (\ref{lem:mor_bet_cstareb:one}) and
(\ref{lem:mor_bet_cstareb:three}) are equivalent
follows by using axiom (\ref{defn:cstareb:two}) and Remark~\ref{rem:corep}.
\end{proof}

Suppose that $\theta:A_2\rightarrow A_1$ is a surjection.  Then
$U=(\tilde\theta\otimes\id)(V_2)$ is a generator for $A_1$: indeed, all
that remains to check is that $A_V = \{ (\id\otimes\omega)(V):
\omega\in\mc B(H_2)_* \} = 
\{ \theta(\id\otimes\omega)(V_2): \omega\in\mc B(H_2)_* \}
= \theta(A_{V_2})$ is dense in $A_1$; but this is clear.

\begin{definition}
Let $\G$ be a locally compact quantum group, and let $\mathbb S=
(A,\Phi,V,H)$ be a $C^*$-Eberlein algebra.  A \emph{morphism}
$\G\rightarrow\mathbb S$ is a non-degenerate $*$-homomorphism
$\theta:A\rightarrow M(C_0^u(\G))$ such that:
\begin{enumerate}
\item $\theta^*:C_0^u(\G)^*\rightarrow A^*$ is a homomorphism of Banach
algebras;
\item we require that $U = (\tilde\theta\otimes\id)(V) \in C_0^u(\G)^{**}
\vnten\mc B(H)$ be a ``possibly degenerate unitary corepresentation''.
This means that $(\tilde\Delta_u\otimes\id)(U) = U_{13} U_{23}$, and there is an
orthogonal projection $p\in\mc B(H)$ with $U^*U=UU^*=1\otimes p$.
\end{enumerate}
\end{definition}

Let us motivate this definition from the commutative situation, and then
state some clarifying lemmas.

\begin{proposition}\label{prop:comcasemorphism}
Let $\G=G$ a locally compact group, and $\mathbb S=S$ a classical semigroup
(so $S\in\textsf{CWCH}$ or $\textsf{CSCH}$).  Let $S$ be presented as acting
on the Hilbert space $H$.  There is a bijection between morphisms
$\G\rightarrow\mathbb S$ and representations $\pi:G\rightarrow\mc B(H)$ with a
closed subspace $H_0\subseteq H$, such that, under the decomposition
$H=H_0\oplus H_0^\perp$, $\pi$ acts as unitaries on $H_0$, and as zero on
$H_0^\perp$.  In particular, condition (2) above is automatic in this case.
\end{proposition}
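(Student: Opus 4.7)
The plan is to reduce a morphism $\theta:C(S)\to M(C_0^u(\G))$ to a continuous semigroup homomorphism $\phi:G\to S$, and then to read the claimed representation-theoretic structure off $\phi(e_G)$. First, since $\G=G$ is classical we have $C_0^u(\G)=C_0(G)$ and so $M(C_0^u(\G))=C^b(G)$; any non-degenerate (hence unital) $*$-homomorphism $\theta:C(S)\to C^b(G)$ corresponds by Gelfand duality (using compactness of $S$) to a unique continuous map $\phi:G\to S$ with $\theta(f)(g)=f(\phi(g))$. The restriction of $\theta^*$ to $M(G)\subseteq C_0(G)^*$ is the pushforward of measures $\mu\mapsto\phi_*\mu$, so asking $\theta^*$ to be a convolution homomorphism (condition (1)) forces, after testing on point masses $\delta_g\star\delta_h=\delta_{gh}$, that $\phi(gh)=\phi(g)\phi(h)$. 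So $\phi$ is a continuous semigroup homomorphism from the group $G$ into the compact semigroup $S\subseteq\mc B(H)_{\|\cdot\|\leq 1}$.

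I would then extract the claimed structure directly from $\phi$. Since $\phi(e_G)$ is an idempotent contraction on $H$, it is an orthogonal projection $p$; put $H_0=pH$. The equalities $\phi(g)=\phi(g)p=p\phi(g)$ show that $\phi(g)$ annihilates $H_0^\perp$ and preserves $H_0$, while $\phi(g^{-1})\phi(g)=p=\phi(g)\phi(g^{-1})$ together with the fact that a contraction with a contraction inverse is a unitary give that $\phi(g)|_{H_0}$ is unitary. So $\pi:=\phi$ has exactly the structure asserted in the proposition, and strong continuity of $\pi|_{H_0}$ follows from weak-$*$ continuity of $\phi:G\to S$ plus the classical agreement of the weak and strong operator topologies on the unitary group.

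Condition (2) will be automatic. Using that $\tilde\theta$ is normal and that $\delta_g\circ\theta=\delta_{\phi(g)}$ on $C(S)$, slicing $U:=(\tilde\theta\otimes\id)(V_S)$ gives $(\id\otimes\omega)(U)\in C^b(G)$ equal to the continuous function $g\mapsto\ip{\phi(g)}{\omega}$, and correspondingly $(\delta_g\otimes\id)(U)=\phi(g)$. So $U$ encodes the bounded weak-$*$-continuous operator-valued map $g\mapsto\phi(g)$. The pointwise identities $\phi(g)^*\phi(g)=\phi(g^{-1})\phi(g)=p=\phi(g)\phi(g)^*$ then translate to $U^*U=UU^*=1\otimes p$, and $\phi(gh)=\phi(g)\phi(h)$ translates likewise to $(\tilde\Delta_u\otimes\id)(U)=U_{13}U_{23}$.

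For the converse, given a representation $\pi$ of the claimed form (implicitly, with $\pi(G)\subseteq S$), the map $\phi:=\pi:G\to S$ is a continuous semigroup homomorphism, so $\theta(f):=f\circ\phi$ is a unital $*$-homomorphism $C(S)\to C^b(G)$ whose adjoint is the convolution-preserving pushforward $\phi_*$; this gives a morphism, and by the preceding paragraph it satisfies condition (2). The two constructions are mutually inverse by Gelfand. The main obstacle is the rigorous transfer, in the third paragraph, of pointwise operator identities to global identities in $C^b(G)^{**}\vnten\mc B(H)$: a density argument is needed, using normality of $\tilde\theta$ together with the fact that point evaluations on $G$ and normal slices into $\mc B(H)$ jointly determine those elements of $C^b(G)^{**}\vnten\mc B(H)$ whose $\omega$-slices already lie in $C^b(G)$.
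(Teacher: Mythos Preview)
Your approach is essentially identical to the paper's: Gelfand duality gives $\phi$, testing condition (1) on point masses yields the homomorphism property, $\phi(e)$ is a contractive idempotent hence a projection $p$, and $H_0=pH$ is invariant with $\phi|_{H_0^\perp}=0$ and $\phi|_{H_0}$ unitary. The ``main obstacle'' you flag at the end dissolves once you observe (as the paper does) that strong continuity of $\pi$ forces $U\in M(C_0(G)\otimes\mc B_0(H))\cong C^b_{\mathrm{str}^*}(G,\mc B(H))$, where multiplication and involution are pointwise; the identities $U^*U=UU^*=1\otimes p$ then follow directly from $\pi(g)^*\pi(g)=\pi(g)\pi(g)^*=p$, with no density argument in $C^b(G)^{**}\vnten\mc B(H)$ required.
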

\begin{proof}
Given a morphism $\theta:C(S)\rightarrow M(C_0(G))$, there is a unique continuous
map $\phi:G\rightarrow S$ with $\theta(f)=f\circ\phi$ for each $f\in C(S)$.
Condition (1) implies that $\phi$ is a homomorphism.  Let $\iota:S\rightarrow
\mc B(H)_{\|\cdot\|\leq 1}$ be the inclusion, and let $\pi=\iota\circ\phi$
a group representation of $G$ by contractions.  Thus $p=\pi(e)$ is a
contractive idempotent, hence an orthogonal projection, on $H$; let $H_0=p(H)$.
Then, for any $g\in G$, we see that $\pi(g)=\pi(eg)=p\pi(g) = \pi(ge) = \pi(g)p
= p\pi(g) p$ and so $H_0$ is invariant for $\pi$.  Furthermore, if $\xi\in
H_0^\perp$ then $\pi(g)\xi = \pi(g)p\xi=0$, and so $\pi=0$ on $H_0^\perp$.
As $S\in\wch$, and $\phi$ is continuous, we see that $\pi$ is weakly, hence
also strongly, continuous.

As $V$ ``is'' the map $\iota$, it follows that $U=(\tilde\theta\otimes\id)(V)
\in C_0(G)^{**}\vnten\mc B(H)$ is the canonical corepresentation associated to
$\pi$.  Let us be a little
more careful: for $\omega=\omega_{\xi,\eta}\in\mc B(H)_*$, we find that
\[ (\id\otimes\omega)(U) = \tilde\theta((\id\otimes\omega)(V))
= \tilde\theta\big( (\pi(g)\xi|\eta)_{g\in G} \big) \in C^b(G) = M(C_0(G))
\subseteq C_0(G)^{**}, \]
from which the claim follows.
As $\pi$ is continuous, it follows that really $U$ is a member of
$M(C_0(G)\otimes\mc B(H)) \subseteq C_0(G)^{**}\vnten\mc B(H)$.
It readily follows that $U^*U= UU^* = 1\otimes p$ as claimed.

Clearly we can also reverse this process.
\end{proof}

\begin{lemma}\label{lem:notions_hm_one}
Let $\G,\mathbb S$ be as above, and let $\theta:A\rightarrow M(C_0^u(\G))$ be
a non-degenerate $*$-homomorphism.  Then the following are equivalent:
\begin{enumerate}
\item $\theta^*:C_0^u(\G)^*\rightarrow A^*$ is a homomorphism;
\item if $\tilde\theta:A^{**}\rightarrow C_0^u(\G)^{**}$ is the normal
extension, then $\tilde\Delta_u \circ \tilde\theta|_A = (\tilde\theta\otimes
\tilde\theta)\circ \Phi$.
\item $C_0^u(\G)^* \rightarrow\mc B(H); \mu\mapsto(\mu\circ\tilde\theta
\otimes\id)(V)$ is a homomorphism.
\end{enumerate}
\end{lemma}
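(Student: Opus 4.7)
The plan is to follow the template of Lemma~\ref{lem:mor_bet_cstareb}, since conditions (1)--(3) have exactly the same shape; the only change is that $\Delta_u$ now plays the role of the ``target'' coproduct and $\Phi$ that of the ``source'' coproduct, so the arguments transfer essentially verbatim.

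For (1)$\iff$(2), the approach is a direct duality unfolding. For $\mu,\nu \in C_0^u(\G)^*$ and $a \in A$, I would view $\theta(a)\in M(C_0^u(\G))\subseteq C_0^u(\G)^{**}$ and use the normal extension $\tilde\Delta_u$ to express $\ip{\theta^*(\mu\star\nu)}{a} = \ip{\mu\otimes\nu}{\tilde\Delta_u(\tilde\theta(a))}$, while by the definition of the product on $A^*$ one has $\ip{\theta^*(\mu)\star\theta^*(\nu)}{a} = \ip{\mu\otimes\nu}{(\tilde\theta\otimes\tilde\theta)\Phi(a)}$. Requiring equality for all $\mu,\nu,a$ is then equivalent to (2), because functionals of the form $\mu\otimes\nu$ separate points of $C_0^u(\G)^{**}\vnten C_0^u(\G)^{**}$.

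For (2)$\iff$(3), I would exploit the corepresentation picture of Remark~\ref{rem:corep}, applied now on the $\G$-side. Setting $U := (\tilde\theta\otimes\id)(V)\in C_0^u(\G)^{**}\vnten\mc B(H)$, condition (3) is precisely the statement that $U$ is a corepresentation of $(C_0^u(\G),\Delta_u)$, i.e.\ $(\tilde\Delta_u\otimes\id)(U) = U_{13}U_{23}$. Using the corepresentation identity $(\tilde\Phi\otimes\id)(V) = V_{13}V_{23}$ for $V$, both sides may be rewritten as slices of $V$: $(\tilde\Delta_u\otimes\id)(U) = \bigl((\tilde\Delta_u\circ\tilde\theta)\otimes\id\bigr)(V)$ and $U_{13}U_{23} = \bigl((\tilde\theta\otimes\tilde\theta)\tilde\Phi\otimes\id\bigr)(V)$. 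Thus (2), extended normally to $A^{**}$, immediately yields (3). For the converse I would slice by $\id\otimes\id\otimes\omega$ for $\omega\in\mc B(H)_*$ to obtain $\tilde\Delta_u\bigl(\tilde\theta((\id\otimes\omega)(V))\bigr) = (\tilde\theta\otimes\tilde\theta)\Phi\bigl((\id\otimes\omega)(V)\bigr)$, and then invoke axiom (\ref{defn:cstareb:two}), namely norm density of $A_V$ in $A$, together with norm continuity of both sides, to extend the identity to all of $A$.

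The only item requiring real care is the interplay between $\theta:A\to M(C_0^u(\G))$ and its normal extension $\tilde\theta:A^{**}\to C_0^u(\G)^{**}$, together with the normality of $\tilde\Delta_u$ on $C_0^u(\G)^{**}$, which is what lets slicing commute with these extensions. I do not anticipate any substantive obstacle: once the bookkeeping of tensor products and normal extensions is in place, the equivalences are forced by exactly the same reasoning as in Lemma~\ref{lem:mor_bet_cstareb}.
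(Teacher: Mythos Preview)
Your proposal is correct and matches the paper's intent: the paper states this lemma without any proof, evidently treating it as a direct analogue of Lemma~\ref{lem:mor_bet_cstareb}, whose brief proof invokes exactly the ingredients you use (routine duality for (1)$\iff$(2), and axiom~(\ref{defn:cstareb:two}) together with Remark~\ref{rem:corep} for the equivalence with (3)). Your write-up simply makes explicit the bookkeeping with normal extensions that the paper leaves to the reader.
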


\begin{lemma}\label{lem:corepismult}
If $U\in C_0^u(\G)^{**}\vnten\mc B(H)$ with $U^*U=UU^*=1\otimes p$ for a 
projection $p\in\mc B(H)$, we have that $U\in M(C_0^u(\G)\otimes\mc B_0(H))$.
\end{lemma}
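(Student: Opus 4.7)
The plan is to exploit the partial-isometry structure together with the corepresentation identity $(\tilde\Delta_u\otimes\id)(U)=U_{13}U_{23}$ that is implicit in the preceding definition of a morphism, since this lemma is meant to clarify what a ``possibly degenerate unitary corepresentation'' looks like. First I would reduce to the unitary case: since $1\otimes p\in M(C_0^u(\G)\otimes\mc B_0(H))$ (because $pk$ remains compact for any $k\in\mc B_0(H)$) and $U=(1\otimes p)U(1\otimes p)$, passage to the subspace $p(H)$ reduces the statement to the case where $U$ is a unitary corepresentation of $\G$ on $p(H)$.

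The second step then invokes the universal property of $C_0^u(\G)$, as developed in \cite{ku}: unitary corepresentations of $\G$ on $H$ stand in bijective correspondence with non-degenerate $*$-representations $\pi\colon C_0^u(\hh{\G})\to\mc B(H)$, realised via the universal bicharacter $\mathbb V\in M(C_0^u(\G)\otimes C_0^u(\hh{\G}))$ by $U=(\id\otimes\pi)(\mathbb V)$. Since $\mathbb V$ already lies in the spatial multiplier algebra and $\pi$ is non-degenerate, applying $\id\otimes\pi$ yields an element of $M(C_0^u(\G)\otimes\mc B_0(H))$. Establishing that our $U$ arises in this way amounts to checking that the slice $\mu\mapsto(\mu\otimes\id)(U)$ from $C_0^u(\G)^*$ to $\mc B(H)$ is a bounded $*$-representation of the convolution algebra: multiplicativity comes from the corepresentation identity, involutivity from unitarity of $U$ combined with the standard involution on $C_0^u(\G)^*$ (via the universal antipode), and boundedness from $\|U\|\leq 1$.

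The main obstacle is that we begin with $U$ only \emph{a priori} in the bidual tensor product, so the universal property of $C_0^u(\G)$ is precisely what bootstraps $U$ into $M(C_0^u(\G)\otimes\mc B_0(H))$. A more hands-on alternative would expand $U(1\otimes\theta_{\xi,\eta})$ in an orthonormal basis $(e_j)$ of $p(H)$ as $\sum_j u_j\otimes|e_j\rangle\langle\eta|$ with $u_j=(\id\otimes\omega_{\xi,e_j})(U)$, and try to verify norm convergence in $C_0^u(\G)\otimes\mc B_0(H)$ directly; the relation $\sum_j u_j^*u_j=\|p\xi\|^2\cdot 1$ extracted from $U^*U=1\otimes p$ only yields $\sigma$-weak convergence, and indeed one can construct commutative counterexamples (e.g.\ a measurable but discontinuous unitary-valued function on $\mathbb R$) showing that the partial-isometry condition alone is insufficient. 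Hence norm convergence of tails, or equivalently membership in the spatial multiplier algebra, ultimately cannot be obtained without appealing to the corepresentation identity in some form.
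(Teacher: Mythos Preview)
Your reduction to the unitary case on $p(H)$ is fine and matches the paper's final step. The main argument, however, has a gap. You propose to invoke the bijection between unitary corepresentations and non-degenerate $*$-representations of $C_0^u(\hh\G)$, but that bijection, as set up in \cite{ku}, takes as \emph{input} a unitary in $M(C_0^u(\G)\otimes\mc B_0(H))$---precisely what you are trying to prove. Your workaround is to verify directly that $\mu\mapsto(\mu\otimes\id)(U)$ is a $*$-homomorphism on a suitable dense $*$-subalgebra of $C_0^u(\G)^*$. Multiplicativity does indeed follow from the corepresentation identity, but the involutivity step requires something like $S_u\big((\id\otimes\omega)(U)\big)=(\id\otimes\omega)(U^*)$, and here you have two problems: first, you do not yet know that the slices $(\id\otimes\omega)(U)$ lie in $M(C_0^u(\G))$ (they are a priori only in $C_0^u(\G)^{**}$, where $S_u$ is not defined); second, the standard proofs of this antipode formula already assume $U$ is a multiplier. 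So the argument is circular as stated.

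The paper's proof sidesteps the antipode entirely and uses only the multiplicative half of your program. It takes the universal left-regular corepresentation $\mc V\in M(C_0^u(\G)\otimes\mc B_0(L^2(\G)))$ together with its implementing relation $(\id\otimes\pi)\Delta_u(x)=\mc V^*(1\otimes\pi(x))\mc V$, extends this normally to $C_0^u(\G)^{**}$, and applies it to the corepresentation identity $(\tilde\Delta_u\otimes\id)(U)=U_{13}U_{23}$ to obtain
\[
\mc V^*_{12}\,(\tilde\pi\otimes\id)(U)_{23}\,\mc V_{12}
= U_{13}\,(\tilde\pi\otimes\id)(U)_{23}.
\]
Multiplying on the right by $(\tilde\pi\otimes\id)(U)^*_{23}$ expresses $(U(1\otimes p))_{13}$ as a product of elements of $M(C_0^u(\G)\otimes\mc B_0(L^2(\G))\otimes\mc B_0(H))$, hence $U(1\otimes p)\in M(C_0^u(\G)\otimes\mc B_0(H))$; the partial-isometry structure then finishes. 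Your final paragraph correctly diagnoses that the corepresentation identity is the essential ingredient---the paper simply deploys it via the implementing unitary $\mc V$ rather than via the dual-representation correspondence.
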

\begin{proof}
We follow \cite{ku}.  Let $L^2(\G)$ be the GNS space for the (left) Haar
weight of $\G$, so the reduced algebra $C_0(\G)$ acts canonically on $L^2(\G)$.
Let $\pi:C_0^u(\G)\rightarrow C_0(\G)$ be the ``reducing morphism''.
There is a unitary corepresentation $\mc V$ of $C_0^u(\G)$ on $L^2(\G)$ such
that $(\id\otimes\pi)\Delta_u(x) = \mc V^*(1\otimes\pi(x))\mc V$ for all
$x\in C_0^u(\G)$; see \cite[Section~6]{ku}.

With $\tilde\Delta_u:C_0^u(\G)^{**} \rightarrow C_0^u(\G)^{**} \vnten C_0^u(\G)^{**}$
and $\tilde\pi:C_0^u(\G)^{**} \rightarrow C_0^u(\G)$ being the normal extensions,
arguing by weak$^*$-continuity, we also have that
\[ (\id\otimes\tilde\pi)\tilde\Delta_u(x) = \mc V^*(1\otimes\tilde\pi(x))\mc V
\qquad (x\in C_0^u(\G)^{**}). \]
It then follows that
\[ \mc V^*_{12} (\tilde\pi\otimes\id)(U)_{23} \mc V_{12}
= (\id\otimes\tilde\pi\otimes\id)(U_{13}U_{23})
= U_{13} (\tilde\pi\otimes\id)(U)_{23}. \]
As $\mc V \in M(C_0^u(\G)\otimes\mc B_0(L^2(\G)))$, it follows that
\begin{align*} (U(1\otimes p))_{13}
&= U_{13} (\tilde\pi\otimes\id)(UU^*)_{23}
= \mc V^*_{12} (\tilde\pi\otimes\id)(U)_{23} \mc V_{12}
(\tilde\pi\otimes\id)(U)_{23}^* \\
&\in M\big( C_0^u(\G) \otimes \mc B_0(L^2(\G)) \otimes \mc B_0(L^2(\G))
\big). \end{align*}
Thus $U(1\otimes p) \in M(C_0^u(\G)\otimes\mc B_0(L^2(G)))$.
If we represent $C_0^u(\G)$ faithfully and non-degenerately on an auxiliary
Hilbert space $K$, then we can regard $U$ as a partial isometry with initial
and final spaces $K\otimes P(H)$, this is enough to show that also
$U\in M(C_0^u(\G)\otimes \mc B_0(L^2(\G)))$.
\end{proof}

\begin{remark}
In the proof of Proposition~\ref{prop:comcasemorphism} we showed that in the
commutative situation, a contractive corepresentation is necessarily a
(possibly degenerate) unitary corepresentation.  Unfortunately, we don't
know this to be true for locally compact quantum groups, even in
the cocommutative case, see \cite{bs} and \cite{bds}.
\end{remark}

\begin{remark}\label{rem:lcqg_is_cstareb}
Let $\mc V\in M(C_0^u(\G)\otimes\mc B_0(L^2(\G)))$ again denote the universal
left regular representation, see \cite[Section~6]{ku}.
Then $(C_0^u(\G),\Delta_u,\mc V,L^2(\G))$
satisfies all the axioms for a $C^*$-Eberlein algebra; for axiom
(\ref{defn:cstareb:two}) see the discussion after \cite[Proposition~5.1]{ku}.
Hence the category of locally compact quantum groups
forms a full sub-category of the $C^*$-Eberlein algebras.
Unfortunately, due to the above comment, we have to restrict the natural
morphisms between locally compact quantum groups and $C^*$-Eberlein algebras.
\end{remark}

\subsection{Banach algebra issues}\label{sec:baissues}

Let $(A,\Phi,V,H)$ be a $C^*$-Eberlein algebra, so that $A^*$ is a Banach
algebra.  We claim that it is a \emph{dual Banach algebra}, meaning that the
multiplication is separately weak$^*$-continuous.  Alternatively, we can
always turn $A^{**}$ into an $A^*$ bimodule via
\[ \ip{x\cdot\mu}{\lambda} = \ip{x}{\mu\star\lambda}, \quad
\ip{\mu\cdot x}{\lambda} = \ip{x}{\lambda\star\mu}
\qquad (x\in A^{**}, \mu,\lambda\in A^*). \]
Then $A^*$ is a dual Banach algebra if and only if $A$ is a submodule of
$A^{**}$.

Let us introduce some notation: for $\mu\in A^*$ let
$V_\mu = (\mu\otimes\id)(V) \in \mc B(H)$.  By axiom (\ref{defn:cstareb:four}),
we know that $\mu\mapsto V_\mu$ is a representation of $A^*$.
By axiom (\ref{defn:cstareb:two}), this map
$\mu\mapsto V_\mu$ is both injective, and weak$^*$-weak$^*$-continuous.

\begin{proposition}\label{prop:get_dba}
$A^*$ is a dual Banach algebra.
\end{proposition}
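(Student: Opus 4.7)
The plan is to use the standard criterion that $A^*$ is a dual Banach algebra if and only if $A$ is a sub-$A^*$-bimodule of $A^{**}$, i.e.\ that $a\cdot\mu,\mu\cdot a\in A$ for every $a\in A,\mu\in A^*$. So the task reduces to verifying this module condition.

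First, I would observe that the set $\{a\in A : a\cdot\mu\in A\}$ is a norm-closed subspace of $A$, because the right-multiplication map $A^{**}\to A^{**}, x\mapsto x\cdot\mu$ is bounded and $A$ is norm-closed in $A^{**}$. By axiom (\ref{defn:cstareb:two}), the subspace $A_V$ is norm dense in $A$, so it suffices to verify the module condition on elements of the form $a=(\id\otimes\omega)(V)$ with $\omega\in\mc B(H)_*$. The analogous reduction works for left multiplication.

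Next, the main computation: for $a=(\id\otimes\omega)(V)$ and $\lambda\in A^*$, unwinding definitions and using axiom (\ref{defn:cstareb:four}) (which gives $V_{\mu\star\lambda}=V_\mu V_\lambda$), I would compute
\[ \ip{a\cdot\mu}{\lambda} = \ip{a}{\mu\star\lambda}
= \omega\bigl((\mu\star\lambda\otimes\id)(V)\bigr)
= \omega(V_\mu V_\lambda)
= (\omega V_\mu)(V_\lambda)
= \ip{(\id\otimes\omega V_\mu)(V)}{\lambda}, \]
where $\omega V_\mu\in\mc B(H)_*$ is the functional $x\mapsto\omega(V_\mu x)$. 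Hence $a\cdot\mu=(\id\otimes\omega V_\mu)(V)\in A_V\subseteq A$. The symmetric calculation using $V_\mu\omega\in\mc B(H)_*$ defined by $x\mapsto\omega(xV_\mu)$ shows $\mu\cdot a=(\id\otimes V_\mu\omega)(V)\in A$.

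Combining the density argument with these explicit formulas gives the result. There is no serious obstacle; the only point that requires the $C^*$-Eberlein structure (as opposed to being formal) is the multiplicativity of $\mu\mapsto V_\mu$ used to rewrite $(\mu\star\lambda\otimes\id)(V)$ as $V_\mu V_\lambda$, and the density of $A_V$ that allows the verification to be made on the distinguished generating set.
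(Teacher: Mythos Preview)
Your proof is correct and follows essentially the same approach as the paper: reduce to checking that $A_V$ is an $A^*$-submodule via density, and then use the multiplicativity $V_{\mu\star\lambda}=V_\mu V_\lambda$ from axiom~(\ref{defn:cstareb:four}) to obtain the explicit formulas $a\cdot\mu=(\id\otimes\omega V_\mu)(V)$ and $\mu\cdot a=(\id\otimes V_\mu\omega)(V)$. The paper additionally sketches an alternative argument via the weak$^*$-homeomorphism of the unit ball of $A^*$ onto its image under $\mu\mapsto V_\mu$, but your version matches the primary proof given there.
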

\begin{proof}
We verify that $A$ is a submodule of $A^{**}$.  By axiom (\ref{defn:cstareb:two}),
and continuity, it's enough to check that $A_V$ is a submodule.  However, for
$a=(\id\otimes\omega)(V)\in A$ and $\mu,\lambda\in A^*$,
\[ \ip{a\cdot\mu}{\lambda} = \ip{V}{\mu\star\lambda\otimes\omega}
= \ip{V_\mu V_\lambda}{\omega} = \ip{V}{\lambda\otimes \omega V_\mu}, \]
so $a\cdot\mu = (\id\otimes \omega V_\mu)(V) \in A_V$.  Similarly, one can
check that $\mu\cdot a = (\id\otimes V_\mu\omega)(V) \in A_V$.

Alternatively, we note that $\mu\mapsto V_\mu$, when restricted to the closed
unit ball of $A^*$, must be a homeomorphism onto its range (as the ball is
weak$^*$-compact).  Then $\{ V_\mu : \|\mu\|\leq 1 \}$ is a weak$^*$-compact
sub-semigroup of $\mc B(H)_{\|\cdot\|\leq 1}$ and is hence semitopological.
Thus $\{ \mu\in A^* : \|\mu\|\leq 1\}$ with the product $\star$ is a
semitopological semigroup, and the result follows.
\end{proof}

Recall that if $A$ is a C$^*$-algebra and $M(A)$ its multiplier algebra, then
any $\mu\in A^*$ is strictly continuous, and so has an extension to $M(A)$.
Alternatively, we can regard $M(A)$ as a (norm) closed subalgebra of $A^{**}$,
and then $A^*$ acts by restriction on $M(A)$.  These two maps $A^*\rightarrow
M(A)^*$ are the same.
If $\theta:A_2 \rightarrow M(A_1)$ is a morphism $\mathbb S_1\rightarrow
\mathbb S_2$, we get an induced homomorphism $\theta^*:A_1^*\rightarrow A_2^*$;
but this is a slight abuse of notation, as $\theta$ might not map into $A_1$,
and so $\theta^*$ might not be weak$^*$-continuous.

We now study some permanence properties of $C^*$-Eberlein algebras, making
use of the above observations.

\begin{proposition}\label{prop:when_can_quotient}
Let $(A,\Phi,V,H)$ be a $C^*$-Eberlein algebra, let $B$ be a $C^*$-algebra,
and let $\theta:A\rightarrow B$ be a surjection.  Set $I$ to be the kernel of
$\theta$, so $B\cong A/I$.  Then there exists
$\Phi':B\rightarrow B^{**}\vnten B^{**}$ and $V'\in B^{**}\vnten\mc B(H')$
making $(B,\Phi',V',H')$ into a $C^*$-Eberlein algebra with $\theta$ a
morphism, if, and only if, $I^\perp$ is a subalgebra of $A^*$.
\end{proposition}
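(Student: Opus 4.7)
My plan is to handle necessity quickly and concentrate the effort on the converse. \emph{Necessity} is immediate from Lemma~\ref{lem:mor_bet_cstareb}: if $\theta$ is a morphism from some $C^*$-Eberlein structure $(B,\Phi',V',H')$ to $(A,\Phi,V,H)$, then by definition $\theta^*:B^*\rightarrow A^*$ is an algebra homomorphism, and its image is precisely $I^\perp$. Hence $I^\perp$ is closed under $\star$, that is, a subalgebra of $A^*$.

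\emph{Sufficiency}: I would take $H'=H$ and $V'=(\tilde\theta\otimes\id)(V)\in B^{**}\vnten\mc B(H)$, where $\tilde\theta:A^{**}\rightarrow B^{**}$ is the normal extension of $\theta$, and then define $\Phi':B\rightarrow B^{**}\vnten B^{**}$ by the forced formula
\[ \Phi'(\theta(a)) := (\tilde\theta\otimes\tilde\theta)\Phi(a) \qquad (a\in A). \]
The main obstacle — and the only place where the hypothesis is really needed — is showing that $\Phi'$ is \emph{well-defined}, i.e.\ that $(\tilde\theta\otimes\tilde\theta)\Phi(a)=0$ for all $a\in I$. For this, pair with an arbitrary elementary tensor $\mu\otimes\nu$ where $\mu,\nu\in B^*$ and compute
\[ \ip{(\tilde\theta\otimes\tilde\theta)\Phi(a)}{\mu\otimes\nu}
= \ip{\Phi(a)}{\theta^*\mu\otimes\theta^*\nu}
= \ip{a}{\theta^*\mu\star\theta^*\nu}. \]
Now $\theta^*\mu,\theta^*\nu\in I^\perp$, and by hypothesis $\theta^*\mu\star\theta^*\nu\in I^\perp$ as well, so this annihilates $a\in I$. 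Since product functionals $\mu\otimes\nu$ span a norm-dense subspace of the predual of $B^{**}\vnten B^{**}$ (in particular they separate points), we conclude $(\tilde\theta\otimes\tilde\theta)\Phi(a)=0$. The same computation simultaneously yields the identity $\theta^*(\mu\star_B\nu)=\theta^*\mu\star\theta^*\nu$, which will be used below.

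It then remains to verify the four axioms for $(B,\Phi',V',H)$ and that $\theta$ is a morphism; this is routine. For axiom~(\ref{defn:cstareb:one}), $\Phi'$ is a $*$-homomorphism as the composition of $\Phi$ with the normal $*$-homomorphism $\tilde\theta\otimes\tilde\theta$ passed to the quotient $B=A/I$, and unitality follows by applying $\tilde\theta\otimes\tilde\theta$ to $\tilde\Phi(1)=1\otimes 1$ (or by checking on a bounded approximate identity). For axiom~(\ref{defn:cstareb:two}), we have $(\id\otimes\omega)(V')=\theta\bigl((\id\otimes\omega)(V)\bigr)$, so $A_{V'}=\theta(A_V)$ is norm-dense in $B$ by surjectivity. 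Axiom~(\ref{defn:cstareb:three}) holds because the induced product on $B^*$ is the pullback along the injection $\theta^*$ of the (associative) product on $I^\perp$. For axiom~(\ref{defn:cstareb:four}), $(\mu\otimes\id)(V')=V_{\theta^*\mu}$, and the composition $\mu\mapsto\theta^*\mu\mapsto V_{\theta^*\mu}$ of two homomorphisms is a homomorphism. Finally, the defining formula gives $\tilde\Phi'\circ\theta=(\tilde\theta\otimes\tilde\theta)\circ\Phi$, which by Lemma~\ref{lem:mor_bet_cstareb} is exactly what is required for $\theta$ to be a morphism.
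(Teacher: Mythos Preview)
Your proof is correct and follows essentially the same route as the paper's. The only cosmetic difference is that the paper identifies $B^{**}$ with $pA^{**}$ for the central projection $p\in A^{**}$ with $(1-p)A^{**}=I^{\perp\perp}$, and writes $\Phi'(a+I)=(p\otimes p)\Phi(a)$ and $V'=(p\otimes 1)V$, whereas you phrase the same maps via the normal extension $\tilde\theta$; since $\tilde\theta$ is precisely multiplication by $p$ under that identification, the two arguments are the same.
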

\begin{proof}
Note that $\theta^*:B^*\rightarrow A^*$ is an isometry onto its range, which
is $I^\perp = \{ \mu\in A^* : \ip{\mu}{a}=0\ (a\in I) \}$.  Then 
$\theta:A^{**}\rightarrow B^{**}$ is a surjection with kernel $I^{\perp\perp}
\cong I^{**}$.  As $I^{\perp\perp}$ is an ideal in $A^{**}$, there is a central
projection $p\in A^{**}$ with $(1-p)A^{**} = I^{\perp\perp}$ and so $B^{**}
\cong A^{**}/I^{\perp\perp}$ is isomorphic to $pA^{**}$.  Notice also that
the map $\mu\mapsto p\mu$ defines a surjection $A^*\rightarrow I^\perp$.

If $\theta$ is a morphism, then $I^\perp\cong B^*$ is a subalgebra of $A^*$.
Conversely, if $I^\perp$ is a subalgebra of $A^*$, then consider
$\Phi'':A\rightarrow B^{**}\vnten B^{**}, a\mapsto (p\otimes p)\Phi(a)$ which
is a $*$-homomorphism as $p$ is central.  Now let $\mu,\lambda\in A^*$ so
that $p\mu,p\lambda\in I^\perp$ and so by assumption $p\mu \star p\lambda
\in I^\perp$.  Thus, for $a\in I$ we have that $0 = \ip{p\mu\star p\lambda}{a}
= \ip{(p\otimes p)\Phi(a)}{\mu\otimes\lambda}$.  This shows that $\Phi''(I)
= \{0\}$ and so there is a well-defined $*$-homomorphism
$\Phi':B=A/I\rightarrow B^{**} \vnten B^{**}$ given by
$\Phi'(a+I) = \Phi''(a) = (p\otimes p)\Phi(a)$.

So $\Phi'$ induces a product on $B^*$.
Then, for $a\in A$ and $\mu,\lambda\in B^*\cong I^\perp$, we see that
\begin{align*} \ip{\theta^*(\mu\star\lambda)}{a}
&= \ip{\Phi'(\theta(a))}{\mu\otimes\lambda}
= \ip{(p\otimes p)\Phi(a)}{\mu\otimes\lambda}
= \ip{\Phi(a)}{\mu\otimes\lambda} = \ip{\mu\star\lambda}{a} \\
&= \ip{\theta^*(\mu)\star \theta^*(\lambda)}{a}, \end{align*}
as $\theta^*:B^*\cong I^\perp\rightarrow A^*$ is just the inclusion.

Finally, we set $H'=H$ and $V'=(p\otimes 1)V
\in B^{**}\vnten\mc B(H')$, so that for $\omega\in\mc B(H')_*$,
\[ (\id\otimes\omega)(V') = p(\id\otimes\omega)(V)
= \theta^{**}\big( (\id\otimes\omega)(V) \big)
= \theta( (\id\otimes\omega)(V) ) \in B, \]
as $(\id\otimes\omega)(V)\in A$.  As $\theta$ is onto, it follows that
$\{ (\id\otimes\omega)(V') : \omega\in\mc B(H')_* \}$ is dense in $B$.
Also $\Phi'((\id\otimes\omega)(V')) =
(p\otimes p)\Phi((\id\otimes\omega)(V)) =
(\id\otimes\id\otimes\omega)(V'_{13} V'_{23})$ and so $V'$ is a
corepresentation, as required.
\end{proof}

We remark that the ``coproduct'' $\Phi'$ on $B$ is uniquely determined by
the product on $B^*$.  As $\theta:A\rightarrow B$ is onto, the coproduct
$\Phi'$ is hence unique.

\begin{proposition}\label{prop:factoring}
Let $\mathbb S_1\rightarrow\mathbb S_2$ be a morphism induced by
$\theta:A_2\rightarrow M(A_1)$.  With $I=\ker\theta$ we have that $I^\perp$
is a subalgebra of $A_2^*$ and so $B=\theta(A_2)\subseteq M(A_1)$ has the
structure of a $C^*$-Eberlein algebra.  The inclusion $B\rightarrow
M(A_1)$ is a morphism.
\end{proposition}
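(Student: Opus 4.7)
The plan is to proceed in three steps: (i) show that $I^\perp \subseteq A_2^*$ is a subalgebra under the convolution product $\star$; (ii) invoke Proposition~\ref{prop:when_can_quotient} to transport the $C^*$-Eberlein structure to $B \cong A_2/I \cong \theta(A_2)$; (iii) using the factorisation $\theta = \iota\circ q$, where $q\colon A_2\twoheadrightarrow A_2/I$ is the quotient and $\iota\colon B\hookrightarrow M(A_1)$ the inclusion, deduce that $\iota$ is a morphism.

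For (i), the key observation is that $\theta^*(A_1^*) \subseteq I^\perp$ since $\theta(I)=0$. Because $A_1$ is an essential ideal in $M(A_1)$, the space $A_1^*$ (acting by its unique strict extension) separates points of $M(A_1)$, so the bipolar theorem gives $\overline{\theta^*(A_1^*)}^{w^*}=I^\perp$ inside $A_2^*$. As $\theta^*$ is a Banach algebra homomorphism by hypothesis, $\theta^*(A_1^*)$ is a $\star$-subalgebra of $A_2^*$; combining this with the separate weak$^*$-continuity of $\star$ furnished by Proposition~\ref{prop:get_dba}, the weak$^*$-closure $I^\perp$ is again closed under $\star$. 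This route has the advantage of avoiding any direct manipulation of $\Phi_2$ inside $A_2^{**}\vnten A_2^{**}$ (where one would otherwise need delicate control of the kernel of $\tilde\theta\otimes\tilde\theta$).

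For (ii) and (iii), Proposition~\ref{prop:when_can_quotient} now produces $\Phi_B$, $V_B$ making $B \cong \theta(A_2)$ a $C^*$-Eberlein algebra with $q$ a morphism; in particular $q^*\colon B^*\cong I^\perp\hookrightarrow A_2^*$ is an injective Banach algebra homomorphism. Since $\theta = \iota\circ q$, we have $\theta^* = q^*\circ\iota^*$ as maps $A_1^*\to A_2^*$. For $\mu,\lambda\in A_1^*$,
\[ q^*\bigl(\iota^*(\mu\star\lambda)\bigr)=\theta^*(\mu\star\lambda)=\theta^*(\mu)\star\theta^*(\lambda)=q^*\bigl(\iota^*(\mu)\star\iota^*(\lambda)\bigr), \]
and injectivity of $q^*$ then yields $\iota^*(\mu\star\lambda)=\iota^*(\mu)\star\iota^*(\lambda)$, so $\iota$ is a morphism. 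The main obstacle is really just the density $\overline{\theta^*(A_1^*)}^{w^*}=I^\perp$; once that bipolar identification is in hand the rest of the proof reduces to functorial bookkeeping with the already-established Proposition~\ref{prop:when_can_quotient}.
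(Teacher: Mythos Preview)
Your argument is correct and follows essentially the same route as the paper: both proofs hinge on the fact that every $\mu\in I^\perp$ is a weak$^*$-limit of elements $\theta^*(\mu_i)$ with $\mu_i\in A_1^*$, and then use separate weak$^*$-continuity of $\star$ on $A_2^*$ to pass to the limit. The paper establishes the approximation via Hahn--Banach extensions to $M(A_1)^*$ and then $A_1^{***}$, and carries out the limit by an explicit chain of equalities using the module actions $\lambda\cdot a$ and $a\cdot\mu$; your use of the bipolar theorem together with the abstract observation ``weak$^*$-closure of a subalgebra in a dual Banach algebra is a subalgebra'' packages the same computation more cleanly. The treatment of the inclusion $\iota$ is identical to the paper's. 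One small omission: you should note that $\iota\colon B\to M(A_1)$ is non-degenerate, which is immediate since $\iota(B)A_1=\theta(A_2)A_1$ has dense linear span by non-degeneracy of $\theta$.
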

\begin{proof}
That we have to work in $M(A_1)$ complicates matters.  Let $\mu\in I^\perp
\cong B^*$ so by Hahn-Banach there is $\mu_0\in M(A_1)^*$ extending $\mu$.
Viewing $M(A_1)$ as a subalgebra of $A_1^{**}$ there is $\mu_1\in A_1^{***}$
extending $\mu_0$ and so there is a bounded net $(\mu_i)$ in $A_1^*$ converging
weak$^*$ to $\mu_2$.  That is,
\[ \lim_i \ip{\mu_i}{\theta(a)} = \ip{\mu}{a} \qquad (a\in A_2). \]
Similarly choose $(\lambda_i)$ for $\lambda\in I^\perp$.  Then, for $a\in I$,
\begin{align*} \ip{\mu\star\lambda}{a} &=
\ip{\mu}{\lambda\cdot a} = \lim_i \ip{\mu_i}{\theta(\lambda\cdot a)}
= \lim_i \ip{\theta^*(\mu_i)\star\lambda}{a} \\
&= \lim_i \ip{\lambda}{a\cdot \theta^*(\mu_i)}
= \lim_i \lim_j \ip{\lambda_j}{\theta(a\cdot \theta^*(\mu_i))}
= \lim_{i,j} \ip{\theta^*(\mu_i)\star\theta^*(\lambda_j)}{a} \\
&= \lim_{i,j} \ip{\theta^*(\mu_i\star\lambda_j)}{a} = 0, \end{align*}
as $\theta(a)=0$.  Thus $\mu\star\lambda\in I^\perp$ as claimed.

The inclusion $i:B\rightarrow M(A_1)$ is a $*$-homomorphism, and it is
non-degenerate as $\lin\{ ba : b\in A, a\in A_1 \} = \lin\{ \theta(a')a:
a'\in A_2,a\in A_1 \}$ is dense in $A_1$ as $\theta$ is non-degenerate.
Then $i$ is a morphism if and only if $i^*:A_1^*\rightarrow B^*$ is a
homomorphism.  However, $B^*$ is isomorphic (by construction, as a Banach
algebra) to $I^\perp\subseteq A_2^*$ and $i^*$, when composed with this
inclusion $B^*\rightarrow A_2^*$, is the map $\theta^*$ which is a
homomorphism.  So $i^*$ is a homomorphism as well, and $i$ is a morphism.
\end{proof}

\subsection{Embedded $C^*$-Eberlein algebras}\label{sec:embedded}

We now start a process of thinking about compactifcations, which as we will
see in Section~\ref{sec:cmpts} leads naturally to considering a locally compact
quantum group $\G$ and C$^*$-Eberlein algebras $(A,\Phi,V,H)$ with $A$ a
subalgebra of $M(C_0^u(\G))$ and $\Phi$ being ``induced'' by the coproduct of
$C_0^u(\G)$.  The purpose of this section is to make this concept precise, and
to characterise when it can occur.

We start with a useful technical proposition.

\begin{proposition}\label{prop:subalg}
Let $A$ be a $C^*$-algebra, $H$ a Hilbert space, and $B\subseteq M(A)$
a sub-$C^*$-algebra.  Let \[ X_0=\{ T\in M(A\otimes\mc B_0(H)) :
(\id\otimes\omega)(T) \in B \ (\omega\in\mc B(H)_*) \}. \]
Then $X_0$ is a closed, $*$-closed subspace of $M(A\otimes\mc B_0(H))$.
For each $T\in X_0$ there is $S\in B^{**}\vnten \mc B(H)$ with
$(\id\otimes\omega)(T) = (\id\otimes\omega)(S) \in B \subseteq B^{**}$ for all
$\omega\in\mc B(H)_*$.  If $T_1,T_2\in X_0$ are such that $T_1T_2 \in X_0$,
and $T_i\mapsto S_i$, then $T_1T_2 \mapsto S_1S_2$.
\end{proposition}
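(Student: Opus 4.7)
The plan is to realize $B^{**} \vnten \mc B(H)$ as a weak$^*$-closed $*$-subalgebra of $A^{**} \vnten \mc B(H)$ and then show that any $T \in X_0$ already lies inside this subalgebra; the desired $S$ will simply be $T$ viewed inside $B^{**} \vnten \mc B(H)$, and part (3) will fall out for free.

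For (1), linearity of slicing $T \mapsto (\id \otimes \omega)(T)$ makes $X_0$ a subspace; norm-continuity of slicing combined with norm-closedness of $B$ in $M(A)$ gives norm-closedness; and the identity $(\id \otimes \omega)(T^*) = (\id \otimes \omega^\sharp)(T)^*$, with $\omega^\sharp(x)=\overline{\omega(x^*)}$, together with $*$-closedness of $B$, gives $*$-closedness. This part is routine.

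For (2), the key observation is that $B \hookrightarrow M(A) \subseteq A^{**}$ is an injective $*$-homomorphism, hence a complete isometry; its normal extension $B^{**}\to A^{**}$ is then an injective normal $*$-homomorphism (the kernel would be $pB^{**}$ for some central projection $p$, which must vanish since $B$ is weak$^*$-dense in $B^{**}$ yet embeds into $A^{**}$), exhibiting $B^{**}$ as a von Neumann subalgebra of $A^{**}$. Consequently $B^{**} \vnten \mc B(H)$ sits as a von Neumann subalgebra of $A^{**} \vnten \mc B(H)$. The crux is the identification
\[ B^{**} \vnten \mc B(H) = \{ x \in A^{**} \vnten \mc B(H) : (\id \otimes \omega)(x) \in B^{**} \ (\omega \in \mc B(H)_*) \}. \]
I would prove this via the operator-space duality $A^{**} \vnten \mc B(H) \cong \mc{CB}(\mc B(H)_*, A^{**})$, $x \leftrightarrow (\omega \mapsto (\id\otimes\omega)(x))$: if the associated map has image in $B^{**}$, then because $B^{**} \hookrightarrow A^{**}$ is a complete isometry the factored map $\mc B(H)_* \to B^{**}$ is CB and corresponds to an element of $B^{**} \vnten \mc B(H)$ that maps onto $x$. (An alternative route is to compute the preannihilator of $B^{**} \vnten \mc B(H)$ in $A^* \proten \mc B(H)_*$ as $B^\perp \proten \mc B(H)_*$, using that $B^* = A^*/B^\perp$ completely isometrically and that the operator-space projective tensor product respects quotients.) For $T \in X_0 \subseteq M(A\otimes\mc B_0(H)) \subseteq A^{**} \vnten \mc B(H)$, every slice lies in $B \subseteq B^{**}$, so $T$ already lies in $B^{**} \vnten \mc B(H)$; take $S=T$ under this identification.

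For (3), since $S_i$ and $T_i$ coincide as elements of $A^{**} \vnten \mc B(H)$ under the $*$-homomorphic inclusion $B^{**} \vnten \mc B(H) \hookrightarrow A^{**} \vnten \mc B(H)$, the products agree: $S_1 S_2 = T_1 T_2$ in either algebra. If the hypothesis $T_1 T_2 \in X_0$ holds, applying (2) to $T_1 T_2$ gives $T_1 T_2 \mapsto S_1 S_2$, as required. The main obstacle is the displayed characterization of $B^{**} \vnten \mc B(H)$ inside $A^{**} \vnten \mc B(H)$: one must invoke the operator-space machinery correctly to see that ``slices in $B^{**}$'' really forces membership in the tensor subalgebra. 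Once that is settled, the rest of the proposition is essentially bookkeeping.
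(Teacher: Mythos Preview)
Your argument contains a genuine gap: the normal extension $B^{**}\to A^{**}$ of the inclusion $B\hookrightarrow M(A)\subseteq A^{**}$ need \emph{not} be injective, so you cannot realise $B^{**}\vnten\mc B(H)$ as a subalgebra of $A^{**}\vnten\mc B(H)$ in the way you propose. A concrete counterexample is $A=\mc B_0(H)$ for infinite-dimensional $H$ and $B=M(A)=\mc B(H)$: then $A^{**}=\mc B(H)$ while $B^{**}=\mc B(H)^{**}$, and the normal extension $\mc B(H)^{**}\to\mc B(H)$ is the projection onto the normal part, whose kernel is the (nonzero) singular part. Your justification ``$B$ is weak$^*$-dense in $B^{**}$ yet embeds into $A^{**}$'' only shows $\ker\cap B=\{0\}$, which does not force the weak$^*$-closed ideal $\ker$ to vanish.

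This undermines both parts (2) and (3) of your plan. For (2) the repair is easy and is what the paper does: the slices of $T$ land in $B$ itself, not merely in some copy of $B^{**}$, so one factors the completely bounded map $\mc B(H)_*\to M(A)$ through the complete isometry $B\hookrightarrow M(A)$ and then composes with $B\hookrightarrow B^{**}$ to obtain $S$. But once you construct $S$ this way, part (3) is no longer ``for free'': there is no ambient algebra in which $S_i$ and $T_i$ literally coincide, so you cannot simply multiply. The paper instead expands $(\id\otimes\omega_{\xi,\eta})(T_1T_2)=\sum_i(\id\otimes\omega_{e_i,\eta})(T_1)(\id\otimes\omega_{\xi,e_i})(T_2)$ over an orthonormal basis, observes that this sum of elements of $B$ converges strictly in $M(A)$ to the left-hand side and $\sigma$-weakly in $B^{**}$ to $(\id\otimes\omega_{\xi,\eta})(S_1S_2)$, and uses the hypothesis $T_1T_2\in X_0$ to identify these limits. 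Some argument of this kind comparing the two convergences is unavoidable.
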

\begin{proof}
For $T\in X_0$, the map $\mc B(H)_* \rightarrow M(A); \omega \mapsto
(\id\otimes\omega)(T)$ is completely bounded, and hence so is the corestriction
$\mc B(H)_*\rightarrow B$, and hence so is the inclusion
$B\subseteq B^{**}$, which finally induces $S\in B^{**}\vnten\mc B(H)$.
We have that $\|S\|=\|T\|$.
We first claim that this map is a $*$-map.  Notice that
$(\id\otimes\omega)(T^*) = (\id\otimes\omega^*)(T)^* \in B$ so $T^*\in X_0$.
Then, for $\mu\in B^*$,
\[ \ip{\mu}{(\id\otimes\omega)(S^*)} = \overline{ \ip{S}{\mu^*\otimes\omega^*}}
= \overline{ \ip{\mu^*}{(\id\otimes\omega^*)(T)} }
= \ip{\mu}{(\id\otimes\omega)(T^*)}, \]
so $T^*\mapsto S^*$ as claimed.

Now let $T_1,T_2\in X_0$ map to $S_1,S_2$.  Let $(e_i)$ be an orthonormal basis
of $H$.  For $\xi,\eta\in H$,
\[ (\id\otimes\omega_{\xi,\eta})(T_1T_2) = \sum_i (\id\otimes\omega_{\xi,\eta})
(T_1 (1\otimes\theta_{e_i,e_i}) T_2) =
\sum_i (\id\otimes\omega_{e_i,\eta})(T_1) (\id\otimes\omega_{\xi,e_i})(T_2). \]
This sum converges in the strict topology on $M(A)$; compare with the discussion
in \cite[Section~5.5]{kv}, or \cite[Lemma~A.5]{kv}, for example.
However, each summand is a member of
$B\subseteq B^{**}$ and one can similarly prove that the sum converges
in the $\sigma$-weak topology on $B^{**}$, compare with
Section~\ref{sec:haatenprod} above.  Note in passing that we don't see
why the sum need converge in norm in $B$, and so there is no reason why $X=X_0$.
Continuing, we see that in $B^{**}$, the sum is also equal to
\[ \sum_i (\id\otimes\omega_{e_i,\eta})(S_1) (\id\otimes\omega_{\xi,e_i})(S_2)
= (\id\otimes\omega_{\xi,\eta})(S_1S_2). \]
By assumption, $T_1T_2\in X_0$ and so $(\id\otimes\omega)(T_1T_2)\in B$.
The argument now shows this to be equal to $(\id\otimes\omega)(S_1S_2)$
as required.
\end{proof}

\begin{remark}\label{rem:counter_example1}
We shall apply this proposition in the case when $B$ is unital.  Then, for
example, if $U\in X_0$ is unitary, then $U^*U=UU^*=1_A\otimes I_H=1_B\otimes I_H$
and so the 2nd part of the proposition allows us to conclude that the image
of $U$ in $B^{**}\vnten\mc B(H)$ is also unitary.

In general, the sum may indeed fail to converge in norm.  For example,
if $A=B=\mc B_0(H)$ then $M(A\otimes\mc B_0(H)) = M(B_0(H)\otimes\mc B_0(H))
\cong \mc B(H\otimes H)$.  Let $\Sigma\in \mc B(H\otimes H)$ be the tensor
swap map, so for $\xi,\eta,\alpha,\beta\in H$,
\[ ((\id\otimes\omega_{\xi,\eta})(\Sigma)\alpha|\beta)
= (\Sigma(\alpha\otimes\xi)|\beta\otimes\eta)
= (\xi\otimes\alpha|\beta\otimes\eta)
= (\xi|\beta) (\alpha|\eta) = (\theta_{\xi,\eta}\alpha|\beta), \]
and so $(\id\otimes\omega_{\xi,\eta})(\Sigma) = \theta_{\xi,\eta} \in B$.
So $\Sigma\in X_0$.  However, $\Sigma^2 = I_H\otimes I_H$ and so
$\Sigma^2\not\in X_0$.
\end{remark}

Suppose we start with a locally compact quantum group $\G$ and a $C^*$-Eberlein
algebra $\mathbb S=(A,\Phi,V,H)$ together with a morphism $\G\rightarrow\mathbb S$
given by $\theta:A\rightarrow M(C_0^u(\G))$.  If $B = \theta(A) \subseteq M(C_0^u(\G))$
then by Proposition~\ref{prop:factoring} we can give $B$ a unique structure
as a $C^*$-Eberlein algebra, in such a way that the inclusion
$\phi:B\rightarrow M(C_0^u(\G))$ is a morphism.  Indeed, this uses
Proposition~\ref{prop:when_can_quotient} to form $(B,\Phi_B,V_B,H)$ with
$V_B=(\theta_0^{**}\otimes\id)(V) \in B^{**}\vnten\mc B(H)$, where 
$\theta_0:A\rightarrow B$ is the corestriction of $\theta$.  Let us pin down
the ``corepresentation'' $V_B$ a little further.

\begin{proposition}
Continuing with the notation of the previous paragraph, there is a subspace $K$
of $H$ which is invariant for $V_B$ and such that if $U'$ is the restriction of
$V_B$ to $K$ then $U'$ is unitary and $(B,\Phi_B,U',K)$ is a $C^*$-Eberlein
algebra.  Furthermore, there is a unitary corepresentation $U$ of $C_0^u(\G)$
on $K$ such that
\[ \big\{ (\id\otimes\omega)(U) : \omega\in\mc B(K)_* \big\} \]
is dense in $B$, and $U'$ is generated from $U$ by the procedure of
Proposition~\ref{prop:subalg}.
\end{proposition}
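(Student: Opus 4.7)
The plan is to transport unitarity from the ambient quantum group back to $B^{**}$. Start by forming $U_0 := (\tilde\theta\otimes\id)(V) \in C_0^u(\G)^{**}\vnten\mc{B}(H)$; the morphism axiom supplies a projection $p\in\mc{B}(H)$ with $U_0^*U_0 = U_0 U_0^* = 1\otimes p$. Set $K := pH$. Lemma~\ref{lem:corepismult} places $U_0 \in M(C_0^u(\G)\otimes\mc{B}_0(H))$, and the relations $U_0 = (1\otimes p)U_0 = U_0(1\otimes p)$ allow us to compress $U_0$ to $K$, yielding a unitary $U \in M(C_0^u(\G)\otimes\mc{B}_0(K))$; the corepresentation identity $(\tilde\Delta_u\otimes\id)(U) = U_{13}U_{23}$ descends from the one for $U_0$ by compressing the third leg.

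Next I would recognise $K$ as an invariant subspace for $V_B$ and identify $U' := V_B|_K$ with the lift of $U$ furnished by Proposition~\ref{prop:subalg}. Write $\theta = \phi\circ\theta_0$ with $\phi:B\hookrightarrow M(C_0^u(\G))$ the inclusion, so that $V_B = (\tilde\theta_0\otimes\id)(V)$ and $U_0 = (\tilde\phi\otimes\id)(V_B)$; for $\omega\in\mc{B}(H)_*$ the slice $(\id\otimes\omega)(U_0)$ equals $\phi((\id\otimes\omega)(V_B))$. Since $(\id\otimes\omega)(U_0) = 0$ whenever $p\omega p = 0$, and $\phi$ is injective on $B$, also $(\id\otimes\omega)(V_B) = 0$; testing with $\omega_{(1-p)\xi,\eta}$ and $\omega_{\xi,(1-p)\eta}$ this forces $V_B = (1\otimes p)V_B(1\otimes p)$. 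Thus $K$ is invariant, $U' := V_B|_K \in B^{**}\vnten\mc{B}(K)$ is well-defined, and for $\omega\in\mc{B}(K)_*$ extended to $\tilde\omega\in\mc{B}(H)_*$ via $\tilde\omega(T)=\omega(pTp)$ one checks
\[ (\id\otimes\omega)(U) = \theta\big((\id\otimes\tilde\omega)(V)\big) = \phi\big((\id\otimes\omega)(U')\big) \in B, \]
so $\{(\id\otimes\omega)(U): \omega\in\mc{B}(K)_*\}$ inherits density in $B$ from axiom~(\ref{defn:cstareb:two}) for $V$, and by matching slices $U'$ is exactly the element produced from $U$ by Proposition~\ref{prop:subalg}.

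The main obstacle will be showing that $U'$ is unitary: the direct computation only yields $(\tilde\phi\otimes\id)(U'^*U' - 1_{B^{**}}\otimes I_K) = 0$, and $\tilde\phi$ need not be injective. My plan is to invoke the multiplicativity clause of Proposition~\ref{prop:subalg}. Since $B$ contains the unit $1_{M(C_0^u(\G))}$ (as $A$ may be assumed unital, otherwise one first passes to the unitisation of Remark~\ref{rem:trivial_compact}), the identity $U^*U = 1\otimes I_K$ lies in $X_0$, and its lift is simultaneously $U'^*U'$ and the obvious $1_{B^{**}}\otimes I_K$, forcing $U'^*U' = 1_{B^{**}}\otimes I_K$; the symmetric argument yields $U'U'^* = 1_{B^{**}}\otimes I_K$. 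With $U'$ unitary, axioms~(\ref{defn:cstareb:one}) and~(\ref{defn:cstareb:three}) for $(B,\Phi_B,U',K)$ are inherited from $(B,\Phi_B,V_B,H)$, axiom~(\ref{defn:cstareb:two}) is the density above, and axiom~(\ref{defn:cstareb:four}) follows by restricting the homomorphism $B^*\to\mc{B}(H),\ \mu\mapsto(\mu\otimes\id)(V_B)$ to the invariant subspace $K$.
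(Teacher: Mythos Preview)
Your proof is correct and follows essentially the same route as the paper: form $U_0=(\tilde\theta\otimes\id)(V)$, use the morphism axiom and Lemma~\ref{lem:corepismult} to get the projection $p$ and the unitary corepresentation $U$ on $K=pH$, identify the compression of $V_B$ to $K$ with the element $U'$ produced from $U$ by Proposition~\ref{prop:subalg}, and then invoke the multiplicativity clause of that proposition (exactly as in Remark~\ref{rem:counter_example1}) to deduce that $U'$ is unitary.

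There is one small organisational difference worth noting. You establish $V_B=(1\otimes p)V_B(1\otimes p)$ directly at the outset, using that $\phi$ is injective on $B$ to pull back the support relations $U_0=(1\otimes p)U_0(1\otimes p)$; this gives invariance of $K$ before unitarity of $U'$. The paper instead defines $U'$ via Proposition~\ref{prop:subalg}, checks it agrees with the compression $(1\otimes p)V_B(1\otimes i)$, and only then concludes invariance of $K$ from the unitarity of $U'$ together with $V_B$ being contractive. Your route is a bit more direct; both work. Your flagging of the unitality of $B$ (needed so that $1\otimes I_K\in X_0$) is exactly the point the paper covers by Remark~\ref{rem:counter_example1}, and your appeal to the unitisation of Remark~\ref{rem:trivial_compact} in the non-unital case is an acceptable reduction, though strictly speaking one should also verify that the morphism $\theta$ extends compatibly to the unitisation.
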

\begin{proof}
By the definition of $\theta$ being a morphism,
$U_0 = (\tilde\theta\otimes\id)(V) \in C_0^u(\G)^{**}\vnten\mc B(H)$
is such that $U_0^*U_0=U_0U_0^*=1\otimes p$ for some projection $p$ on $H$.
By Lemma~\ref{lem:corepismult} we know that actually
$U_0\in M(C_0^u(\G)\otimes\mc B(H))$, so we may let $U$ be the restriction of $U_0$
to a unitary corepresentation of $C_0^u(\G)$ in
$M(C_0^u(\G)\otimes\mc B_0(K))$, where $K=p(H)$.

Now let $\tilde\phi:B^{**}\rightarrow C_0^u(\G)^{**}$ be the normal extension
of $\phi$ and notice that $\tilde\phi\circ\theta_0^{**} = \tilde\theta$.  Hence
$(\tilde\phi\otimes\id)(V_B) = U_0$.  If $i:K\rightarrow H$ is the inclusion,
then
\[ (\id\otimes\omega)(U) = (\id\otimes i\omega p)(U_0)
= \tilde\theta( (\id\otimes i\omega p)(V) )
= \theta( (\id\otimes i\omega p)(V) )
\qquad ( \omega\in\mc B(K)_* ) \]
and this is a member of $B$ by axiom (\ref{defn:cstareb:two}).
Furthermore, $B$ is in fact the closure of the set
$\{ (\id\otimes\omega)(U) : \omega\in\mc B(K)_* \}$.
So can apply the procedure of Proposition~\ref{prop:subalg} to $U$ to yield
$U'\in B^{**}\vnten\mc B(K)$.  By construction,
$(\id\otimes\omega)(U') = (\id\otimes\omega)(U) = \tilde\phi((\id\otimes
i\omega p)(V_B)) = \phi((\id\otimes i\omega p)(V_B))$ as 
$(\id\otimes i\omega p)(V_B)\in B$, for any $\omega\in B(K)_*$.  It follows
that $U'$ is simply the compression of $V_B$ to $K$, that is,
$U' = (1\otimes p)V_B(1\otimes i)$.

However, by Proposition~\ref{prop:subalg}, as $U^*U=UU^*=I$, also $U'$ is
unitary.  So $K$ is an invariant subspace of $V_B$, and hence $(B,\Phi_B,U',K)$
is a $C^*$-Eberlein algebra.
\end{proof}

Let us think when we can reverse this process: that is, if $U\in M(C_0^u(\G)
\otimes\mc B(K))$ is a unitary corepresentation of $C_0^u(\G)$, when is
there a $C^*$-Eberlein algebra $(A,\Phi,V,H)$ and a morphism
$\theta:A\rightarrow M(C_0^u(\G))$ such that $U$ arises from the construction
above?  It is clearly necessary that $\{ (\id\otimes\omega)(U) :
\omega\in\mc B(K)_* \}$ is dense in a $C^*$-algebra $B\subseteq M(C_0^u(\G))$
(which will be the image of $\theta$).  This turns out to also be sufficient.

\begin{theorem}\label{thm:emb_cseb}
Let $U\in M(C_0^u(\G)\otimes\mc B(K))$ be a unitary corepresentation of
$C_0^u(\G)$ such that the closure of $\{ (\id\otimes\omega)(U) :
\omega\in\mc B(K)_* \}$ is a $C^*$-algebra $B\subseteq M(C_0^u(\G))$.
Let $U'\in B^{**}\vnten\mc B(K)$ be given by Proposition~\ref{prop:subalg}
applied to $U$.  Then there is a unique $\Phi_B$ such that
$\mathbb S=(B,\Phi_B,U',K)$ is a C$^*$-Eberlein algebra with the inclusion
$B\rightarrow M(C_0^u(\G))$ inducing a morphism $\G\rightarrow\mathbb S$.
\end{theorem}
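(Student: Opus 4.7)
My plan is to produce $\Phi_B$ as a ``$*$-homomorphism lifting'' of $\Delta_u|_B$ into $B^{**}\vnten B^{**}$, using the corepresentation identity $(\Delta_u\otimes\id)(U)=U_{13}U_{23}$ as the structural input, and then to read off all four axioms plus the morphism condition from this construction.

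First, I would apply Proposition~\ref{prop:subalg} directly to $T=U\in M(C_0^u(\G)\otimes\mc B_0(K))$, with sub-$C^*$-algebra $B\subseteq M(C_0^u(\G))$; this yields $U'\in B^{**}\vnten\mc B(K)$ with $(\id\otimes\omega)(U')=(\id\otimes\omega)(U)\in B$ for every $\omega\in\mc B(K)_*$. Contractivity of $U'$ is inherited from $\|U\|=1$, and Axiom~(\ref{defn:cstareb:two}) of Definition~\ref{defn:cstareb} is then just the hypothesis that slices of $U$ are dense in $B$. So from this point the only substantive task is to construct $\Phi_B$.

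For $a=(\id\otimes\omega_{\xi,\eta})(U')\in A_{U'}$, and an orthonormal basis $(e_i)$ of $K$, I would define
\[ \Phi_B(a) := \sum_i (\id\otimes\omega_{e_i,\eta})(U')\otimes(\id\otimes\omega_{\xi,e_i})(U'), \]
interpreted as an element of the extended Haagerup product $B\ehten B\subseteq B^{**}\vnten B^{**}$, as in Section~\ref{sec:haatenprod}. Convergence in $B\ehten B$ follows because $U$ is unitary: the identity $\sum_i |e_i\rangle\langle e_i|=1_K$ collapses the row and column squares $\sum_i x_i x_i^*$ and $\sum_i y_i^* y_i$ to scalar multiples of $1_{M(C_0^u(\G))}$, so both are weak$^*$-bounded in $B^{**}$. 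The corepresentation identity $(\Delta_u\otimes\id)(U)=U_{13}U_{23}$ then forces $(\tilde\phi\vnten\tilde\phi)(\Phi_B(a))=\Delta_u(\phi(a))$, where $\phi:B\hookrightarrow M(C_0^u(\G))$ and $\tilde\phi$ is its normal extension. Since $\Delta_u\circ\phi$ is a contractive $*$-homomorphism on $B$, this inequality $\|\Phi_B(a)\|\leq\|a\|$ combined with multiplicativity and $*$-preservation on the dense $*$-subalgebra $A_{U'}$ (proved by a direct reindexing of Haagerup sums using that $\Delta_u$ is itself a $*$-homomorphism) lets me extend $\Phi_B$ uniquely by continuity to a $*$-homomorphism $B\to B^{**}\vnten B^{**}$. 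Unitality (Axiom~(\ref{defn:cstareb:one})) is inherited from that of $\Delta_u$ via the usual bounded-approximate-identity argument, Axiom~(\ref{defn:cstareb:four}) is the very formula defining $\Phi_B$, and Axiom~(\ref{defn:cstareb:three}) (associativity of $\star$) follows from coassociativity of $\Delta_u$, transported through $\tilde\phi\vnten\tilde\phi$.

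Finally, the morphism property is automatic: the inclusion $\phi$ is non-degenerate (slices of a unitary corepresentation generate a non-degenerate subalgebra of $M(C_0^u(\G))$, as in \cite{ku}), the intertwining identity $(\tilde\phi\vnten\tilde\phi)\Phi_B=\tilde\Delta_u\tilde\phi$ is exactly what was built in, and condition~(2) of the definition of a morphism $\G\to\mathbb S$ reduces to $U$ being a unitary corepresentation, which is our starting hypothesis. Uniqueness of $\Phi_B$ follows from Remark~\ref{rem:corep}: Axioms~(\ref{defn:cstareb:three}) and (\ref{defn:cstareb:four}) together with Axiom~(\ref{defn:cstareb:two}) and continuity force $\Phi_B$ on $A_{U'}$, hence on $B$.

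The main obstacle I anticipate is Part~2: making precise that the extended Haagerup sum defines a genuine $*$-homomorphism into $B^{**}\vnten B^{**}$ rather than merely a bounded $*$-linear map on a dense subspace. Checking multiplicativity requires reindexing a product of two Haagerup sums and then applying the corepresentation identity together with the $*$-homomorphism property of $\Delta_u$; one also has to confirm that the natural maps $B\ehten B\to B^{**}\vnten B^{**}\to C_0^u(\G)^{**}\vnten C_0^u(\G)^{**}$ are compatible with all the products and involutions being used, which is essentially a matter of repeated appeal to Proposition~\ref{prop:subalg} and the fact that the $\sigma$-weak topology is compatible with the extended Haagerup sums (compare the argument in the proof of Proposition~\ref{prop:subalg}).
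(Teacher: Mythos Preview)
Your approach of defining $\Phi_B$ directly via the extended Haagerup sum is natural, and much of your outline is sound: well-definedness on slices can indeed be checked via injectivity of $B\ehten B\to C_0^u(\G)^{**}\ehten C_0^u(\G)^{**}$, and multiplicativity and the $*$-property on slices follow from those of $\Delta_u$. However, there is a genuine gap in your argument for the bound $\|\Phi_B(a)\|\leq\|a\|$ needed to extend $\Phi_B$ by continuity from $A_{U'}$ to all of $B$.

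You write that this inequality follows because ``$\Delta_u\circ\phi$ is a contractive $*$-homomorphism on $B$.'' But what you actually have is $(\tilde\phi\vnten\tilde\phi)(\Phi_B(a))=\Delta_u(\phi(a))$, hence only $\|(\tilde\phi\vnten\tilde\phi)(\Phi_B(a))\|\leq\|a\|$. To deduce $\|\Phi_B(a)\|\leq\|a\|$ you would need $\tilde\phi\vnten\tilde\phi:B^{**}\vnten B^{**}\to C_0^u(\G)^{**}\vnten C_0^u(\G)^{**}$ to be isometric, and there is no reason for the normal extension $\tilde\phi:B^{**}\to C_0^u(\G)^{**}$ even to be injective. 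The Haagerup injectivity result you invoke only controls the $\ehten$-norm, and the explicit row/column estimate your representation gives is $\|\xi\|\,\|\eta\|$, which can be strictly larger than $\|a\|=\|(\id\otimes\omega_{\xi,\eta})(U)\|$. Nor can you fall back on automatic continuity: a $*$-homomorphism from a dense $*$-subalgebra of a $C^*$-algebra into another $C^*$-algebra need not be contractive for the ambient $C^*$-norm (evaluate polynomials, viewed as a dense $*$-subalgebra of $C[0,1]$, at the point $2$).

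This is precisely why the paper takes a different route. It first shows that $B$ is a $C_0^u(\G)^*$-submodule with $B\subseteq\wap(C_0^u(\G)^*)$, so that the Arens products on $C_0^u(\G)^{***}$ drop to a well-defined, separately weak$^*$-continuous product on $B^*$. Because $C_0^u(\G)^*$ is a completely contractive Banach algebra, this product is automatically completely contractive, and its adjoint furnishes a complete contraction $\Phi_B:B\to B^{**}\vnten B^{**}$ with no further work. Only after $\Phi_B$ is known to be bounded is your Haagerup-sum formula identified with it, and Haagerup injectivity is then used solely to transfer multiplicativity from $\Delta_u$, not to obtain boundedness. Your outline can be repaired by importing this $\wap$/Arens-product step to construct a bounded $\Phi_B$ first and then identifying it with your Haagerup sum; without that, the extension from $A_{U'}$ to $B$ is not justified.
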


Before giving the proof, we need to introduce a little Banach algebraic
background.  For a Banach algebra $\mf A$ turn $\mf A^*$ into an
$\mf A$-bimodule in the usual way.  For a functional $\mu\in\mf A^*$,
define the orbit maps
\[ L_\mu:\mf A\rightarrow\mf A^*, a\mapsto \mu\cdot a, \qquad
R_\mu:\mf A\rightarrow\mf A^*, a\mapsto a\cdot\mu. \]
Let $\kappa:\mf A\rightarrow\mf A^{**}$ be the canonical map from a Banach
space to its bidual.  Then it's easy to see that $L_\mu^*\circ\kappa = R_\mu$
and $R_\mu^* \circ \kappa = L_\mu$.  From Gantmacher's theorem, it follows
that $R_\mu$ is a weakly-compact map if and only if $L_\mu$ is, and in this
case we say that $\mu$ is \emph{weakly almost periodic}, written $\mu\in
\wap(\mf A)$.  Then $\wap(\mf A)$ is a closed (two-sided) submodule of $\mf A^*$.

Now recall the Arens products on $\mf A^{**}$: these are the two natural algebra
products on $\mf A^{**}$ obtained by extending the product from $\mf A$ by
weak$^*$-continuity, either on the left or the right.  More algebraically,
for $\Phi,\Psi\in\mf A^{**}$ and $\mu\in\mf A, a\in\mf A$ we successively define
\begin{gather*} \ip{\Phi\aone\Psi}{\mu} = \ip{\Phi}{\Psi\cdot\mu},
\qquad \ip{\Phi\atwo\Psi}{\mu} = \ip{\Psi}{\mu \cdot \Phi} \\
\ip{\Psi\cdot\mu}{a} = \ip{\Psi}{\mu\cdot a}, \qquad
\ip{\mu\cdot\Phi}{a} = \ip{\Phi}{a\cdot\mu}. \end{gather*}
Then $\aone,\atwo$ are Banach algebra products on $\mf A^{**}$ making
$\kappa$ a homomorphism.  We say that a closed submodule $X$ of $\mf A^*$
is \emph{introverted} if $\Phi\cdot\mu,\mu\cdot\Phi\in X$ for all $\mu\in X,
\Phi\in\mf A^{**}$.  In this case, it is easy to see that $\aone$ and
$\atwo$ drop to well-defined products on $X^* = \mf A^{**} / X^\perp$.
One can then show that the resulting products on $X^*$ are the same if
and only if $X\subseteq\wap(\mf A)$ (and we have that $\wap(\mf A)$ is
always introverted).  Furthermore, in this case, the product in $X^*$ is
separately weak$^*$-continuous, and so $X^*$ becomes a dual Banach
algebra.  For a self-contained proof of this, see
\cite[Proposition~2.4]{daws1}; compare \cite[Lemma~1.4]{lr}.

\begin{proof}[Proof of Theorem~\ref{thm:emb_cseb}]
Firstly, $\theta:B\rightarrow M(C_0^u(\G))$ will be non-degenerate.  Indeed,
as $U$ is a unitary multiplier, $\{ U(a\otimes\theta) : a\in C_0^u(\G),
\theta\in\mc B_0(K) \}$ will have dense linear span in $C_0^u(\G)\otimes
\mc B_0(K)$ and hence
\begin{align*} \overline{\lin}\{ (\id\otimes\omega)(U)a  &
  : a\in C_0^u(\G), \omega\in\mc B(K)_* \} \\
&= \overline{\lin}\{ (\id\otimes\theta\omega)(U(a\otimes 1))
  : a\in C_0^u(\G), \theta\in\mc B_0(K), \omega\in\mc B(K)_* \} \\
&= \overline{\lin}\{ (\id\otimes\omega)(U(a\otimes\theta))
  : a\in C_0^u(\G), \theta\in\mc B_0(K), \omega\in\mc B(K)_* \} \\
&= \overline{\lin}\{ (\id\otimes\omega)(a\otimes\theta)
  : a\in C_0^u(\G), \theta\in\mc B_0(K), \omega\in\mc B(K)_* \}
= C_0^u(\G),
\end{align*}
as required to show that $\theta(B)C_0^u(\G)$ has dense span in $C_0^u(\G)$.

We have that $C_0^u(\G)^*$ is a Banach algebra, and so $C_0^u(\G)^{**}$ is
a $C_0^u(\G)^*$-bimodule.  If we regard $B$ as a subalgebra of $M(C_0^u(\G))$,
which in turn we regard as a subalgebra of $C_0^u(\G)^{**}$, then we claim that
$B$ is a $C_0^u(\G)^*$-submodule.  Indeed, for $b=(\id\otimes\omega)(U)\in B$
and $\mu\in C_0^u(\G)^*$, we find that for all $\lambda\in C_0^u(\G)^*$,
\[ \ip{\mu\cdot b}{\lambda} = \ip{\lambda\star\mu\otimes\omega}{U}
= \ip{\lambda\otimes\mu\otimes\omega}{U_{13}U_{23}}
= \ip{\lambda\otimes\omega}{U(1\otimes(U_\mu))}
= \ip{(\id\otimes U_\mu\omega)(U)}{\lambda}, \]
so $\mu\cdot b = (\id\otimes U_\mu\omega)(U)\in B$.
Similarly $b\cdot\mu = (\id\otimes \omega U_\mu)(U)\in B$.
We next claim that $B\subseteq\wap(C_0^u(\G)^*)$.  Indeed, for $b$ of the form
above, the map $C_0^u(\G)^*\rightarrow B; \mu\mapsto \mu\cdot b =
(\id\otimes\omega U_\mu)(U)$ factors through the map $\mc B(H)\rightarrow
\mc B(H)_*; T\mapsto \omega T$ which is weakly-compact.  As $\wap(C_0^u(\G)^*)$
is closed, the claim follows.

From the discussion above, the Arens products on $C_0^u(\G)^{***}$ drop to
a well-defined product on $B^* = C_0^u(\G)^{***} / B^\perp$ turning $B^*$ into
a dual Banach algebra.  As $C_0^u(\G)^*$ is a
completely contractive Banach algebra (it is the predual of the
Hopf-von Neumann algebra $(C_0^u(\G)^{**},\tilde\Delta_u)$)
we may work in the category of Operator Spaces to conclude that $B^*$ is also
a completely contractive dual Banach algebra.  That is, the product on $B^*$
induces a complete contraction $\Phi_*:B^*\proten B^*\rightarrow B^*$ and the
adjoint gives a complete contraction $B^{**}\rightarrow B^{**}\vnten B^{**}$;
let $\Phi_B$ be this map restricted to $B$.

We can describe $\Phi_B$ more concretely as follows.  Let $\mu\in B^*$ and let
$\mu_0\in C_0^u(\G)^{***}$ be a Hahn-Banach extension.  There is hence a
bounded net $(\mu_i)$ in $C_0^u(\G)^*$ converging weak$^*$ to $\mu_0$.  For
$\lambda\in B^*$ similarly find $(\lambda_j)$.  Then, for $b\in B$,
\[ \ip{\Phi_B(b)}{\mu\otimes\lambda} = \ip{\mu\star\lambda}{b}
= \ip{\mu_0}{\lambda_0\cdot b} = \lim_i \ip{\lambda_0\cdot b}{\mu_i}
= \lim_i \ip{\lambda_0}{b\cdot\mu_i}
= \lim_i \lim_j \ip{b}{\mu_i\star\lambda_j}. \]
We now use this to show that $\Phi_B$ is a $*$-homomorphism.  From the above,
\[ \ip{\Phi_B(b^*)}{\mu\otimes\lambda}
= \lim_i \lim_j \ip{b^*}{\mu_i\star\lambda_j}
= \lim_i \lim_j \overline{ \ip{b}{\mu_i^*\star\lambda_j^*} }
= \overline{\ip{\Phi(b)}{\mu^*\otimes\lambda^*}}
= \ip{\Phi_B(b)^*}{\mu\otimes\lambda}. \]
That $(\mu_i\star\lambda_j)^* = \mu_i^*\star\lambda_j^*$ follows as $\Delta_u$
is a $*$-homomorphism.  So $\Phi_B$ is a $*$-map.

We now proceed to check that $\Phi_B$ is multiplicative, for which we
``play-off'' $U$ and $U'$.
By construction, $(\id\otimes\omega)(U') = (\id\otimes\omega)(U) \in B$ for all
$\omega\in\mc B(H)_*$.  For $\mu,\lambda\in B^*$ choose $(\mu_i),(\lambda_i)$
as above, and let $\omega\in\mc B(H)_*$.  Then
\begin{align*} \ip{U'_\mu U'_\lambda}{\omega}
&= \ip{\mu}{(\id\otimes U'_\lambda\omega)(U)}
= \lim_i \ip{(\id\otimes U'_\lambda\omega)(U)}{\mu_i}
= \lim_i \ip{U'_\lambda}{\omega U_{\mu_i}} \\
&= \lim_i \ip{\lambda}{(\id\otimes \omega U_{\mu_i})(U)}
= \lim_i \lim_j \ip{U_{\lambda_j}}{\omega U_{\mu_i}}
= \lim_i \lim_j \ip{U_{\mu_i} U_{\lambda_j}}{\omega} \\
&= \lim_i \lim_j \ip{U_{\mu_i\star\lambda_j}}{\omega}
= \lim_i \lim_j \ip{(\id\otimes\omega)(U)}{\mu_i\star\lambda_j}
= \ip{\Phi_B((\id\otimes\omega)(U))}{\mu\otimes\lambda}.
\end{align*}
We conclude $\Phi_B((\id\otimes\omega)(U'))
= (\id\otimes\id\otimes\omega)(U'_{13} U'_{23})$.  Notice also that
\[ \ip{U'_\mu}{\omega} = \ip{\mu}{(\id\otimes\omega)(U')}
= \lim_i \ip{(\id\otimes\omega)(U)}{\mu_i}
= \lim_i \ip{U_{\mu_i}}{\omega}, \]
so $U_{\mu_i} \rightarrow U'_\mu$ weak$^*$ in $\mc B(H)$.

We now use the extended Haagerup tensor product, compare
Section~\ref{sec:haatenprod}.
Let $b = (\id\otimes\omega_{\xi,\eta})(U)\in B$, and let $(e_i)$ be an
orthonormal basis of $H$, so that, as in the previous paragraph
\begin{align*} \ip{\Phi_B(b)}{\mu\otimes\lambda}
&= \lim_i \lim_j \ip{U_{\mu_i} U_{\lambda_j}}{\omega}
= \lim_i \ip{U_{\mu_i} U'_\lambda}{\omega}
= \ip{U'_\mu U'_\lambda}{\omega} \\
&= \sum_k (U'_\mu e_k|\eta) (U'_\lambda\xi|e_k)
= \sum_k \ip{\mu}{(\id\otimes\omega_{e_k,\eta})(U)}
\ip{\lambda}{(\id\otimes\omega_{\xi,e_k})(U)}.
\end{align*}
Thus $\Phi_B(b) = \sum_i (\id\otimes\omega_{e_i,\eta})(U)
\otimes (\id\otimes\omega_{\xi,e_i})(U)$ a sum which converges in
$B\ehten B\subseteq B^{**}\ehten B^{**}$ treated as a (not closed) subspace
of $B^{**}\vnten B^{**}$.  However, we could alternatively treat $B$ as a
subalgebra of $M(C_0^u(\G))$ and in turn treat this as a subalgebra of
$C_0(\G)^{**}$.  Exactly the same argument then shows that
$\sum_i (\id\otimes\omega_{e_i,\eta})(U)
\otimes (\id\otimes\omega_{\xi,e_i})(U)$ converges weak$^*$ in
$C_0^u(\G)^{**} \ehten C_0^u(\G)^{**}$ and is equal to $\Delta_u(b)$,
identifying $M(C_0^u(\G)\otimes C_0^u(\G))$ with a subalgebra of
$C_0^u(\G)^{**} \vnten C_0^u(\G)^{**}$.

By \cite[Lemma~5.4]{er2} we know that the extended Haagerup tensor product
is ``injective''.  In particular, thinking of $B$ as a subalgebra of
$C_0^u(\G)^{**}$, the map $B\ehten B\rightarrow C_0^u(\G)^{**}\ehten
C_0^u(\G)^{**}$ is a complete isometry.  Using the arguments of
\cite[Sections~6-7]{er2} (a result proved for von Neumann algebras
in \cite{bsm}) we know that $B\ehten B$ is an algebra.  Thus, if we
set $X = \{ (\id\otimes\omega)(U) : \omega\in\mc B(H)_* \}$ then $X$ is
a dense subalgebra of $B$, and $\Phi_B$ restricts to a map $X\rightarrow
B\ehten B$.  Under the identifications derived above, $\Phi_B$ is equal to
$\Delta_u$ regarded as a map $X\rightarrow B\ehten B$.  As $\Delta_u$ is
homomorphism, so is $\Phi_B$, restricted to $X$.  By continuity,
$\Phi_B:B\rightarrow B^{**}\vnten B^{**}$ is a homomorphism, as required.

In summary, we have that $\mathbb S = (B,\Phi_B,U',H)$ is a $C^*$-Eberlein
algebra and the inclusion $\theta:B\rightarrow M(C_0^u(\G))$ is a non-degenerate
$*$-homomorphism.  By construction, $\theta^*:C_0^u(\G)^*\rightarrow B^*$ is
a homomorphism, and so $\theta$ defines a morphism $\mathbb S\rightarrow
\mathbb G$.
\end{proof}

\begin{definition}\label{def:embedded}
$C^*$-Eberlein algebras $(B,\Phi_B,U',H)$ which arise in this way are
\emph{embedded $C^*$-Eberlein algebras of $\G$}.
\end{definition}

\begin{remark}\label{rem:why_use_full_cstar}
Let us just remark that we used very little of the structure of locally
compact quantum groups here: the results would make sense of any $C^*$-bialgebra
$(A,\Delta)$ replacing $C_0^u(\G)$, with the appropriate notion of a ``morphism''.

In particular, the theory would work with $C_0^u(\G)$ replaced by $C_0(\G)$.
However, there is a subtle point here.  Any unitary corepresentation $U$ of
$C_0(\G)$ ``lifts'' uniquely to a corepresentation $V$ of $C_0^u(\G)$,
see \cite[Proposition~6.6]{ku}.  If $V$ satisfies the condition of
Theorem~\ref{thm:emb_cseb} above, then so does $U$, as we can just apply the
canonical surjection $C_0^u(\G) \rightarrow C_0(\G)$.  However, we have been
unable to decide if the converse to this claim holds.
\end{remark}

\section{Invariant means}\label{sec:means}

Classically, the existance of invariant means is proved by use of a fixed
point theorem (for example, the use of the Ryll-Nardzewski Theorem in the proof
of \cite[Theorem~2.15, Chapter~4]{bjm}).
Instead, we shall use a Hilbert space technique.

We proceed with some generality.  Let $A$ be a $C^*$-algebra with
$\Phi:A\rightarrow A^{**}\vnten A^{**}$ a unital $*$-homomorphism inducing a
Banach algebra product on $A^*$.  Let $U\in A^{**}\vnten\mc B(H)$ be a
corepresentation of $(A,\Phi)$, in the sense of Remark~\ref{rem:corep}.

\begin{definition}\label{defn:invvec}
A vector $\xi\in H$ is \emph{invariant} for $U$ if $(\mu\otimes\iota)(U)
\xi = \xi \ip{\mu}{1}$ for all $\mu\in A^*$.
Denote by $\inv(U)$ the collection of all invariant vectors of $U$.

For $\xi\in H$, the \emph{average} of $\xi$ is the unique vector of minimal
norm in the closure of the convex set $C_\xi := \{ (\mu\otimes\iota)(U)\xi
: \mu\text{ a state on }A \}$, which exists by properties of Hilbert spaces.
\end{definition}

\begin{proposition}\label{prop:average}
Let $U$ be a contractive corepresentation of $A$ on $H$, and let $\xi\in H$.
The average of $\xi$ is invariant for $U$.
\end{proposition}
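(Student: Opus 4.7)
The plan is to run the classical Hilbert-space averaging argument, exploiting the minimum-norm characterisation of the average. Let $\eta$ denote the average of $\xi$, and for $\mu\in A^*$ write $T_\mu := (\mu\otimes\iota)(U)$. Since $U$ is a contraction in $A^{**}\vnten\mc B(H)$ and any state $\nu$ on $A$ extends uniquely to a normal state on $A^{**}$, each $T_\nu$ is a contraction on $H$, and axiom (\ref{defn:cstareb:four}) supplies $T_\nu T_\mu = T_{\nu\star\mu}$.

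The first step is to show that $C_\xi$ is stable under $T_\nu$ for every state $\nu$. Because $\Phi$ is a unital $*$-homomorphism, the convolution $\nu\star\mu$ of two states is positive, and $\ip{\nu\star\mu}{1}=\ip{\Phi(1)}{\nu\otimes\mu}=1$, so it is again a state. Hence $T_\nu T_\mu\xi = T_{\nu\star\mu}\xi \in C_\xi$ for every state $\mu$, giving $T_\nu(C_\xi)\subseteq C_\xi$ and, by continuity of $T_\nu$, $T_\nu(\overline{C_\xi})\subseteq\overline{C_\xi}$.

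Next, since $T_\nu$ is a contraction, $\|T_\nu\eta\|\leq\|\eta\|$; but $T_\nu\eta$ lies in $\overline{C_\xi}$, on which $\eta$ is the unique element of minimal norm. Thus $\|T_\nu\eta\|=\|\eta\|$, and the uniqueness of the minimiser (the standard parallelogram-identity argument applied to $\eta$ and $T_\nu\eta$ inside the closed convex set $\overline{C_\xi}$) forces $T_\nu\eta=\eta$. Finally, the identity $T_\mu\eta = \eta\,\ip{\mu}{1}$ is linear in $\mu$ and holds whenever $\mu$ is a state, so by the Jordan decomposition of functionals it extends to all $\mu\in A^*$, showing $\eta\in\inv(U)$.

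The only delicate step is the minimality/uniqueness argument in $\overline{C_\xi}$, which is the usual strict-convexity fact in Hilbert space; once this is in hand, the proof is a short combination of contractivity of $T_\nu$, the multiplicativity $T_\nu T_\mu = T_{\nu\star\mu}$, and the fact that $\Phi$ is a unital $*$-homomorphism so that convolution preserves states.
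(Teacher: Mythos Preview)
Your argument is correct and follows essentially the same route as the paper's own proof: show that convolution of states is a state (using that $\Phi$ is a unital $*$-homomorphism), deduce that each $T_\nu$ with $\nu$ a state maps $\overline{C_\xi}$ contractively into itself, invoke uniqueness of the minimal-norm element to get $T_\nu\eta=\eta$, and then extend to arbitrary $\mu\in A^*$ by decomposing $\mu$ as a linear combination of states. The only cosmetic difference is that you spell out the passage to $\overline{C_\xi}$ and the strict-convexity step more explicitly than the paper does.
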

\begin{proof}
Let $\xi_0$ be the average of $\xi$.  If $\mu,\lambda\in A^*$ are states,
then so is $\mu\star\lambda$, as $\Phi$ is a unital $*$-homomorphism.  Hence
$(\mu\otimes\id)(U) (\lambda\otimes\id)(U) \xi =
(\mu\star\lambda\otimes\id)(U)\xi \in C_\xi$.  It follows by continuity that
$(\mu\otimes\id)(U)$ leaves $C_\xi$ invariant.  As $U$ is contractive, if
$\mu\in A^*$ is a state, then $(\mu\otimes\id)(U)$ is a contractive operator.
Hence $(\mu\otimes\id)(U)\xi_0 \in C_\xi$ and $\| (\mu\otimes\id)(U)\xi_0 \|
\leq \|\xi_0\|$.  By uniqueness, $(\mu\otimes\id)(U)\xi_0 = \xi_0$.
This holds for all states, so by polar decomposition, $(\mu\otimes\id)(U)\xi_0
= \ip{\mu}{1} \xi_0$ for all $\mu\in A^*$, as claimed.
\end{proof}

We now proceed to construct means on C$^*$-Eberlein algebras.
We are inspired by \cite[Chapter~2.3]{bjm}, but we shall work with the language
of Banach algebras.  Let $\mathbb S=(A,\Phi,V,H)$ be a $C^*$-Eberlein algebra,
where in this section we shall suppose that $A$ is unital.
Recall from Proposition~\ref{prop:get_dba} that $A^*$ becomes a dual Banach
algebra, and so $A$ is an $A^*$-bimodule.

\begin{definition}\label{defn:invmean}
A state $M\in A^*$ is a \emph{left invariant mean} if
$\ip{M}{a\cdot\mu} = \ip{M}{a}$ for all states $\mu\in A^*$ and $a\in A$.
Similarly we defined a \emph{right invariant mean}.  A mean which is both
left and right invariant is an \emph{invariant mean}.
\end{definition}

As $\Phi:A\rightarrow A^{**}\vnten A^{**}$ we can make sense of the map
$(\id\otimes M)\Phi:A\rightarrow A^{**}$ (which will be completely positive).
Then $M$ is left invariant if and only if $(\id\otimes M)\Phi$ is the rank-one
map $a\mapsto \ip{M}{a} 1$; similarly $M$ is right invariant if 
$(M\otimes \id)\Phi$ is the rank-one map $a\mapsto \ip{M}{a} 1$.

\begin{theorem}\label{thm:wheninvmean}
For $a\in A$, let $K(a) = \{ \mu\cdot a : \mu \text{ is a state on }A \}$.
Then the following are equivalent:
\begin{enumerate}
\item\label{thm:wheninvmean:one}
$A$ has a left invariant mean;
\item\label{thm:wheninvmean:two}
$K(a)\cap \mathbb C1 \not= \emptyset$ for all $a\in A$;
\item\label{thm:wheninvmean:three}
$0\in K(a-a\cdot\mu)$ for all $a\in A$ and all states $\mu\in A^*$.
\end{enumerate}
\end{theorem}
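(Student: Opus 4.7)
The plan is to prove the implications (1) $\Rightarrow$ (2) $\Rightarrow$ (3) $\Rightarrow$ (1). The first two are short computations; the third requires a Hahn--Banach argument combined with an inductive construction.

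For (1) $\Rightarrow$ (2), I would first record two identities. Unitality of $\Phi$ gives $1\cdot\mu = \mu(1)\cdot 1$, so $1\cdot\mu = 1$ whenever $\mu$ is a state. If $M$ is a left invariant mean then $\langle M, a\cdot\mu\rangle = \langle M,a\rangle$ for every state $\mu$, which translates under the pairing identity $\langle M, a\cdot\mu\rangle = \langle a, \mu\star M\rangle$ to $\mu\star M = M$ for every state $\mu$; decomposing a general functional into complex combinations of states then yields $\lambda\star M = \lambda(1)\cdot M$ for all $\lambda \in A^*$. Pairing with $\lambda$ gives $\langle M\cdot a, \lambda\rangle = \langle a, \lambda\star M\rangle = \langle M,a\rangle\,\lambda(1)$, so $M\cdot a = \langle M,a\rangle\cdot 1 \in K(a)\cap \mathbb{C}1$. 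For (2) $\Rightarrow$ (3), pick a state $\nu_a$ with $\nu_a\cdot a = c\cdot 1$; using associativity of the bimodule structure and $1\cdot\mu = 1$, $\nu_a\cdot(a - a\cdot\mu) = c\cdot 1 - (\nu_a\cdot a)\cdot\mu = c\cdot 1 - c\cdot 1 = 0$, so $0 \in K(a - a\cdot\mu)$.

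For (3) $\Rightarrow$ (1), set $N := \overline{\lin}\{a - a\cdot\mu : a\in A,\ \mu\ \text{a state on } A\}$. Since $A$ is unital, any norm-one functional on $A$ sending $1$ to $1$ is automatically a state, so by Hahn--Banach it suffices to verify $\|1 + n\| \geq 1$ for every $n \in N$. By norm continuity I may restrict to finite sums $n = \sum_{i=1}^{k}(a_i - a_i\cdot\mu_i)$. The central claim is that for every such $n$ there is a state $\nu$ with $\nu\cdot n = 0$: I prove this by induction on $k$, the base case being exactly (3). For the inductive step, if $\nu$ kills $n_k$ on the left, then
\[
\nu\cdot(a_{k+1} - a_{k+1}\cdot\mu_{k+1}) \;=\; (\nu\cdot a_{k+1}) - (\nu\cdot a_{k+1})\cdot\mu_{k+1}
\]
is again of the form $b - b\cdot\mu_{k+1}$, so (3) yields a state $\nu'$ killing it on the left, whence $\nu'\star\nu$ (itself a state, since $\Phi$ is a unital $*$-homomorphism, so convolution preserves states) annihilates $n_{k+1}$.

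Given such a $\nu$, taking any state $\mu$ and $\rho := \mu\star\nu$ gives a state with $\rho(n) = \langle \mu, \nu\cdot n\rangle = 0$, so $\rho(1+n) = 1$ and $\|1+n\|\ge 1$. Hahn--Banach now produces a state $M$ with $M|_N = 0$, which is by definition a left invariant mean. The main obstacle is spotting the inductive observation: that the class of invariance defects $\{a - a\cdot\mu\}$ is stable under left convolution by elements of $A^*$, so (3) can be chained to simultaneously annihilate finitely many such defects by a single state; once this is in hand, the Hahn--Banach step is routine.
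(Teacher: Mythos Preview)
Your proof is correct and follows essentially the same route as the paper: the implications (1)$\Rightarrow$(2)$\Rightarrow$(3) are identical, and for (3)$\Rightarrow$(1) both arguments hinge on the same inductive observation that $\nu\cdot(a-a\cdot\mu)=(\nu\cdot a)-(\nu\cdot a)\cdot\mu$, allowing applications of (3) to be chained via convolution of states. The only difference is in the final packaging: the paper shows the sets $M(a,\mu)=\{\lambda\text{ a state}:\lambda\cdot(a-a\cdot\mu)=0\}$ are weak$^*$-closed with the finite intersection property and picks $\lambda$ in their intersection (then $M=\lambda\star\lambda$), whereas you convert the same inductive claim into the norm estimate $\|1+n\|\ge 1$ and invoke Hahn--Banach directly; these are interchangeable standard moves.
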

\begin{proof}
This is an abstract version of \cite[Theorem~2.3.11]{bjm}.  By definition,
$M\in A^*$ is left invariant when $\mu\star M=\ip{\mu}{1} M$ for all
$\mu\in A^*, a\in A$, or equivalently, if $M\cdot a = \ip{M}{a} 1$ for all
$a\in A$.
So (\ref{thm:wheninvmean:one})$\Rightarrow$(\ref{thm:wheninvmean:two}).
If (\ref{thm:wheninvmean:two}) holds then for $a\in A$ there is a state
$\mu\in A^*$ and $t\in\mathbb C$ with $\mu\cdot a=t1$.  Then for a state
$\lambda\in A^*$ we have that $\mu\cdot(a-a\cdot\lambda) =
t1 - (\mu\cdot a)\cdot\lambda = t1-t1\cdot\lambda = 0$ as $1\cdot\lambda
= (\lambda\otimes\id)\Phi(1)=1$.
So (\ref{thm:wheninvmean:two})$\Rightarrow$(\ref{thm:wheninvmean:three}).

Now suppose that (\ref{thm:wheninvmean:three}) holds.  For each $a\in A$ and
state $\mu$ let
\[ M(a,\mu)= \{ \lambda\in A^*\text{ a state } : \lambda\cdot
(a-a\cdot\mu) = 0 \}, \]
which is non-empty by assumption.  If $(\lambda_\alpha)$ is a net in $M(a,\mu)$
converging weak$^*$ to $\lambda$, then $\lambda$ is a state, and for any
$\phi\in A^*$,
\[ \ip{\phi}{\lambda\cdot(a-a\cdot\mu)}
= \ip{\lambda}{(a-a\cdot\mu)\cdot\phi}
= \lim_\alpha \ip{\lambda_\alpha}{(a-a\cdot\mu)\cdot\phi}
= \lim_\alpha \ip{\phi}{\lambda_\alpha\cdot(a-a\cdot\mu)} = 0. \]
So $\lambda\in M(a,\mu)$ and we conclude that $M(a,\mu)$ is weak$^*$-closed.

We claim that the family $\{ M(a,\mu) : a\in A, \mu \text{ a state} \}$ has the
finite intersection property.  If so, then as the unit ball of $A^*$ is
weak$^*$-compact, there is $\lambda\in M(a,\mu)$ for all $a,\mu$.  Set $M=
\lambda\star\lambda$ so
\[ \ip{M}{a\cdot\mu} = \ip{\lambda}{\lambda\cdot(a\cdot\mu)}
= \ip{\lambda}{\lambda\cdot a} = \ip{M}{a}, \]
that is, $M$ is left invariant.  As $\Phi$ is a unital $*$-homomorphism,
and $\lambda$ is a state, also $M$ is a state as required to show
(\ref{thm:wheninvmean:one}).

To show the finite intersection property, we use induction.  Let
$a_1,\cdots,a_n\in A$ and $\mu_1,\cdots,\mu_n$ be states on $A$, and
suppose that $\lambda\in \bigcap_{j=1}^{n-1} M(a_j,\mu_j)$.  As
(\ref{thm:wheninvmean:three}) holds, we can find
$\phi \in M(\lambda\cdot a_n,\mu_n)$, so
\[ 0 = \phi\cdot( \lambda\cdot a_n  - \lambda\cdot a_n\cdot\mu_n)
= (\phi\star\lambda)\cdot(a_n-a_n\cdot\mu_n). \]
However, for $1\leq j<n$,
\[ (\phi\star\lambda)\cdot (a_j-a_j\cdot\mu_j)
= \phi\cdot\big( \lambda\cdot(a_j-a_j\cdot\mu_j) \big)
= \phi\cdot 0 = 0. \]
Thus $\phi\star\lambda\in\bigcap_{j=1}^{n} M(a_j,\mu_j)$, and so the result
follows by induction.
\end{proof}

\begin{remark}
If $M$ is a left invariant mean on $A$ then for any state $\mu\in A^*$ and
$a\in A$, we have that $(M\star\mu)\cdot a = M \cdot(\mu\cdot a)
= \ip{M}{\mu\cdot a} 1 = \ip{M\star\mu}{a}1$ and so $M\star\mu$ is also
a left invariant mean.  Thus, if $t1\in K(a)$, that is, there is a state
$\mu$ with $\mu\cdot a=t1$, then $(M\star\mu)\cdot a = M\cdot(\mu\cdot a)
= t1$.  Thus $K(a) \cap \mathbb C1 = \{ \lambda\cdot a : \lambda
\text{ a left invariant mean} \}$.
\end{remark}

\begin{theorem}
Let $(A,\Phi,V,H)$ be a C$^*$-Eberlein algebra with $A$ unital.
Then $A$ has a left invariant mean.
\end{theorem}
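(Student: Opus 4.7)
The plan is to verify condition~(\ref{thm:wheninvmean:three}) of Theorem~\ref{thm:wheninvmean}: for each $a\in A$ and each state $\mu\in A^*$, produce a net of states $(\lambda_\alpha)$ with $\lambda_\alpha\cdot(a-a\cdot\mu)\to 0$ in norm. Since states have unit norm and the module action of $A^*$ on $A$ is contractive (because $\Phi$ is a unital $*$-homomorphism), axiom~(\ref{defn:cstareb:two}) together with continuity lets me reduce to the case $a=(\id\otimes\omega_{\xi,\eta})(V)$ for vectors $\xi,\eta\in H$. A routine slice calculation as in the proof of Proposition~\ref{prop:get_dba} then gives $a\cdot\mu=(\id\otimes\omega_{V_\mu\xi,\eta})(V)$ and so $a-a\cdot\mu=(\id\otimes\omega_{(I-V_\mu)\xi,\eta})(V)$, and similarly $\lambda\cdot(a-a\cdot\mu)=(\id\otimes\omega_{(I-V_\mu)\xi,V_\lambda^*\eta})(V)$, where as usual $V_\nu=(\nu\otimes\id)(V)$.

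The main construction is a Hilbert-space averaging on $H$ applied to $\eta$ through the ``adjoint'' action $\lambda\mapsto V_\lambda^*$. Set $C_\eta=\mathrm{conv}\{V_\lambda^*\eta:\lambda\in A^*\text{ a state}\}$. The convolution of two states is a state and axiom~(\ref{defn:cstareb:four}) gives $V_\nu V_\lambda=V_{\nu\star\lambda}$, so $(V_\lambda V_\nu)^*=V_\nu^*V_\lambda^*$ is again of the form $V_{\rho}^*$ with $\rho=\lambda\star\nu$ a state; hence for every state $\nu$ the contraction $v\mapsto V_\nu^*v$ sends $C_\eta$, and therefore its norm closure, into itself. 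Let $\eta_0\in\overline{C_\eta}$ be the unique element of minimum norm. As in Proposition~\ref{prop:average}, contractivity combined with uniqueness of the minimum-norm element forces $V_\nu^*\eta_0=\eta_0$ for every state $\nu$. Extending by Jordan decomposition, every $\phi\in A^*$ is a linear combination of states $\phi=\sum_ic_i\lambda_i$, and linearity of $\nu\mapsto V_\nu^*$ yields
\[
V_\phi^*\eta_0=\sum_ic_iV_{\lambda_i}^*\eta_0=\Big(\sum_ic_i\Big)\eta_0=\phi(1)\,\eta_0.
\]

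To finish, pick from the definition of $\eta_0$ a net of convex combinations $\lambda_\alpha=\sum_jc_{\alpha,j}\lambda_{\alpha,j}$ of states (each $\lambda_\alpha$ is itself a state) with $V_{\lambda_\alpha}^*\eta\to\eta_0$ in norm; since $v\mapsto(\id\otimes\omega_{\xi',v})(V)$ is norm-contractive, this gives
\[
\lambda_\alpha\cdot(a-a\cdot\mu)\longrightarrow(\id\otimes\omega_{(I-V_\mu)\xi,\eta_0})(V)
\]
in norm in $A$. Because $\mu$ is a state, $V_\mu^*\eta_0=\eta_0$, so $\eta_0\in\ker(I-V_\mu^*)=\mathrm{Ran}(I-V_\mu)^\perp$; in particular $((I-V_\mu)\xi\,|\,\eta_0)=0$. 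Testing the limit against any $\phi\in A^*$ gives
\[
\phi\big((\id\otimes\omega_{(I-V_\mu)\xi,\eta_0})(V)\big)=(V_\phi(I-V_\mu)\xi\,|\,\eta_0)=((I-V_\mu)\xi\,|\,V_\phi^*\eta_0)=\overline{\phi(1)}((I-V_\mu)\xi\,|\,\eta_0)=0,
\]
so the limit vanishes, completing the verification of (\ref{thm:wheninvmean:three}). The main conceptual subtlety is the choice to average on the right-hand vector $\eta$ by the anti-homomorphism $\lambda\mapsto V_\lambda^*$ (rather than on $\xi$), followed by extension of invariance from the cone of states to all of $A^*$ via Jordan decomposition; once both are in place, the cancellation is automatic from $\mu(1)=1$.
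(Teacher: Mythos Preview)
Your overall strategy is the same as the paper's: reduce to coefficients $a=(\id\otimes\omega_{\xi,\eta})(V)$ and then use the Hilbert-space averaging of Proposition~\ref{prop:average} to produce an invariant vector. However, your slice computations swap left and right throughout. From the proof of Proposition~\ref{prop:get_dba} one has $a\cdot\mu=(\id\otimes\,\omega V_\mu)(V)$, and for $\omega=\omega_{\xi,\eta}$ this is $\omega_{\xi,V_\mu^*\eta}$, \emph{not} $\omega_{V_\mu\xi,\eta}$; the latter is $\mu\cdot a$. Likewise, for $b=(\id\otimes\omega_{\xi',\eta'})(V)$ one gets $\lambda\cdot b=(\id\otimes\omega_{V_\lambda\xi',\eta'})(V)$, whereas your formula $(\id\otimes\omega_{\xi',V_\lambda^*\eta'})(V)$ is $b\cdot\lambda$. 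With these consistent swaps, what you have actually verified is the right-handed analogue of condition~(\ref{thm:wheninvmean:three}), which yields a \emph{right} invariant mean rather than the left one asserted. (A related slip: $\nu\mapsto V_\nu^*$ is conjugate-linear, so the extension step gives $V_\phi^*\eta_0=\overline{\phi(1)}\,\eta_0$, not $\phi(1)\,\eta_0$.)

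The repair is easy and brings you essentially to the paper's argument: with the correct formulas, $a-a\cdot\mu=(\id\otimes\omega_{\xi,(I-V_\mu^*)\eta})(V)$ and $\lambda\cdot(a-a\cdot\mu)=(\id\otimes\omega_{V_\lambda\xi,(I-V_\mu^*)\eta})(V)$, so one averages $\xi$ (not $\eta$) via the homomorphism $\lambda\mapsto V_\lambda$, obtaining $\xi_0$ with $V_\nu\xi_0=\nu(1)\xi_0$. The paper in fact proceeds slightly more directly, verifying condition~(\ref{thm:wheninvmean:two}) instead of~(\ref{thm:wheninvmean:three}): once $\xi_0$ is invariant, $\mu\cdot a=(\id\otimes\omega_{\xi_0,\eta})(V)=(\xi_0|\eta)1$ for the limiting state $\mu$, so $K(a)\cap\mathbb C1\neq\emptyset$ without any further cancellation argument. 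One more small point: condition~(\ref{thm:wheninvmean:three}) requires an actual state $\lambda$ with $\lambda\cdot(a-a\cdot\mu)=0$, not just a net tending to zero; you should pass to a weak$^*$-cluster point of your $\lambda_\alpha$ (as in the paper's reduction step) to close this gap.
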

\begin{proof}
We verify condition (\ref{thm:wheninvmean:two}) of the previous theorem.
Indeed, we shall show that
$K(a)\cap\mathbb C1\not=\emptyset$ for $a\in A_V$.  This suffices, for if
$a\in A$ there is a sequence $(a_n)$ in $A_V$ converging to $a$.
By assumption, for each $n$ there is a state $\mu_n$ and $t_n\in\mathbb C$
with $\mu_n \cdot a_n = t_n 1$.  Notice that $|t_n| \leq \|\mu_n\|\|a_n\|
= \|a_n\|\rightarrow \|a\|$.  So by moving to subnets, we may suppose that
$t_n\rightarrow t\in\mathbb C$ and $\mu_n\rightarrow\mu$ weak$^*$.  Then,
for $\lambda\in A^*$,
\begin{align*} |\ip{\lambda}{\mu_n\cdot a_n} - \ip{\lambda}{\mu\cdot a}|
&= | \ip{\mu_n}{a_n\cdot\lambda} - \ip{\mu}{a\cdot\lambda}| \\
&\leq | \ip{\mu_n}{a_n\cdot\lambda} - \ip{\mu_n}{a\cdot\lambda} |
  + | \ip{\mu_n}{a\cdot\lambda} - \ip{\mu}{a\cdot\lambda} | \\
&= | \ip{\mu_n}{a_n\cdot\lambda - a\cdot\lambda} |
  + | \ip{\mu_n - \mu}{a\cdot\lambda} |,
\end{align*}
which converges to $0$, as $a_n\cdot\lambda - a\cdot\lambda\rightarrow 0$
in norm, and $\mu_n-\mu\rightarrow 0$ weak$^*$.  So $t_n 1 = \mu_n\cdot a_n
\rightarrow \mu\cdot a$ weakly, hence $\mu\cdot a = t1$.  So $K(a)\cap
\mathbb C1\not=\emptyset$ for all $a$.

By replacing $V$ by $V_{12}$ acting on $H\otimes\ell^2$, we may suppose that
$A_V = \{ (\id \otimes\omega_{\xi,\eta})(V) : \xi,\eta\in H \}$.  So let
$a=(\id\otimes\omega_{\xi,\eta})(V)$.  As in the proof of
Proposition~\ref{prop:get_dba}, $\mu\cdot a=(\id\otimes
V_\mu\omega_{\xi,\eta})(V) = (\id\otimes\omega_{V_\mu\xi,\eta})(V)$
for $\mu\in A^*$.  As $V$ is unitary, by
Proposition~\ref{prop:average}, if $\xi_0$ is the average of $\xi$,
then $\xi_0 \in \inv(V)$ and $\xi_0$ is in the
closure of $\{ V_\mu\xi : \mu\text{ a state}\}$.  So there is a net of
states $(\mu_i)$ with $V_{\mu_i} \xi \rightarrow \xi_0$ in norm.  Passing to
a subnet, we may suppose that $\mu_i\rightarrow\mu$ weak$^*$.  So
\[ \mu\cdot a = (\id\otimes\omega_{V_\mu\xi,\eta})(V)
= \lim_i (\id\otimes\omega_{V_{\mu_i}\xi,\eta})(V)
= (\id\otimes\omega_{\xi_0,\eta})(V) = (\xi_0|\eta)1, \]
as $\xi_0$ is invariant.  Hence $(\xi_0|\eta)1 \in K(a)$, as required.
\end{proof}

\begin{theorem}\label{thm:invmeanuni}
Let $(A,\Phi,V,H)$ be a C$^*$-Eberlein algebra with $A$ unital.
Let $M\in A^*$ be a left invariant mean.  Then $M$ is the unique left
invariant mean, and $(M\otimes\id)(V)\in\mc B(H)$
is the orthogonal projection onto $\inv(V)$.  If $V$ is unitary, then
$M$ is also right invariant.
\end{theorem}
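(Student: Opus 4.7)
The approach is to first identify the operator $P := (M\otimes\id)(V)\in\mc B(H)$ as the orthogonal projection onto $\inv(V)$. From this, uniqueness of $M$ is immediate via injectivity of the representation $\mu\mapsto V_\mu$, and right invariance in the unitary case will follow from a classical Hilbert space fixed-vector trick.

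For the first step, note that left invariance ($\mu\star M=\ip{\mu}{1}M$ for every $\mu\in A^*$) applied to $\mu=M$ gives $M\star M = \ip{M}{1}M = M$, as $M$ is a state. By axiom~(\ref{defn:cstareb:four}) of Definition~\ref{defn:cstareb}, the map $\mu\mapsto V_\mu$ is a homomorphism, so $P^2 = V_{M\star M} = V_M = P$. Since $\|P\|\leq\|M\|\,\|V\|\leq 1$, any contractive idempotent on a Hilbert space is an orthogonal projection. For its range, left invariance gives $V_\mu P = V_{\mu\star M} = \ip{\mu}{1}P$ for all $\mu$, so the range of $P$ lies in $\inv(V)$; and if $\xi\in\inv(V)$, then for every $\eta\in H$,
\[ (P\xi|\eta) = \ip{M\otimes\omega_{\xi,\eta}}{V} = \ip{M}{1}(\xi|\eta) = (\xi|\eta), \]
forcing $P\xi=\xi$.

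Uniqueness then drops out immediately: any other left invariant mean $M'$ would yield $(M'\otimes\id)(V)=P=(M\otimes\id)(V)$, and the injectivity of $\mu\mapsto V_\mu$ recorded in Proposition~\ref{prop:get_dba} (itself a consequence of axiom~(\ref{defn:cstareb:two})) forces $M'=M$.

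For right invariance when $V$ is unitary, the statement $M\star\mu = \ip{\mu}{1}M$ translates via the same injectivity to $PV_\mu = \ip{\mu}{1}P$, and by linearity I only need to check this for states $\mu$. The issue is to show that $V_\mu$ preserves $\inv(V)^\perp = \ker P$ (it already acts as the identity on $\inv(V) = \mathrm{range}(P)$, by definition). For this I would invoke the classical fact that a contraction $T$ on a Hilbert space with $T\xi=\xi$ automatically satisfies $T^*\xi = \xi$ (a one-line calculation $\|T^*\xi-\xi\|^2 = \|T^*\xi\|^2 - 2\Re(\xi|T\xi) + \|\xi\|^2 \leq 0$), applied to $T = V_\mu$ and $\xi=\eta\in\inv(V)$. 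This gives $V_\mu^*\eta = \eta$, and hence $(V_\mu\zeta|\eta) = (\zeta|V_\mu^*\eta) = 0$ for every $\zeta\in\inv(V)^\perp$, as required. The chief obstacle here is largely bookkeeping---transferring between the algebraic statement on $A^*$ and the operator statement on $H$; the Hilbert space step itself is classical, and one notes in passing that it relies only on $V_\mu$ being a contraction, so unitarity of $V$ does not appear to be essential beyond what is already used.
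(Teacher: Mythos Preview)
Your argument is correct, and the overall structure matches the paper's: identify $P=(M\otimes\id)(V)$ as the orthogonal projection onto $\inv(V)$, deduce uniqueness from injectivity of $\mu\mapsto V_\mu$, and then handle right invariance separately. Two points of genuine difference are worth noting.

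For the idempotence of $P$, you go via $M\star M=M$ and the homomorphism property, whereas the paper first computes the range of $p$ to be $\inv(V)$ and observes that $p$ fixes this range (so $p^2=p$ follows a posteriori). Both are fine; yours is marginally more direct.

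The more interesting divergence is in the right-invariance step. The paper argues at the level of $A^{**}\vnten\mc B(H)$: from $V(1\otimes p)=1\otimes p$ and $V^*V=1$ one gets $V^*(1\otimes p)=1\otimes p$, hence $(1\otimes p)V=1\otimes p$, and then slices. This genuinely uses that $V$ is an isometry. Your argument instead works slice by slice: for a state $\mu$ the operator $V_\mu$ is a contraction fixing each $\eta\in\inv(V)$, and the classical identity $\|T^*\eta-\eta\|^2\leq 0$ forces $V_\mu^*\eta=\eta$, so $V_\mu$ preserves $\inv(V)^\perp$. This needs only that $V$ is a contraction, which is part of axiom~(\ref{defn:cstareb:one}). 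Your closing remark is therefore correct: the unitarity hypothesis on $V$ is not actually needed for the conclusion that a left invariant mean is automatically right invariant, and your route yields a slight strengthening of the theorem as stated.
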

\begin{proof}
Let $p=(M\otimes\id)(V)\in\mc B(H)$.  For $a=(\id\otimes\omega)(V) \in A$ we
have that $\ip{M}{a}1 = M\cdot a = (\id\otimes p\omega)(V)$.  So with
$\omega=\omega_{\xi,\eta}$ we see that for any $\mu\in A^*$,
\[ (p\xi|\eta) \ip{\mu}{1} = \ip{M\otimes\omega}{V} \ip{\mu}{1}
= \ip{M}{a} \ip{\mu}{1} = \ip{\mu}{(\id\otimes p\omega)(V)}
= (V_\mu p \xi|\eta), \]
so $V_\mu p\xi = \ip{\mu}{1} p\xi$ for all $\mu\in A^*$, that is,
$p\xi\in\inv(V)$.

Conversely, if $\xi\in\inv(V)$ then $V_\mu\xi = \ip{\mu}{1}\xi$ for all
$\xi\in A^*$, and so in particular, $p\xi = V_M\xi = \xi$ as $M$ is a state.
So $p$ is an idempotent with image $\inv(V)$.  As $p$ is contractive, $p$ must
be the orthogonal projection onto $\inv(V)$.

If $N$ is also a left invariant mean then also $(N\otimes\id)(V)=p$.
So for $a=(\id\otimes\omega)(V)\in A_V$ we find that $\ip{N}{a} =
\ip{p}{\omega} = \ip{M}{a}$.  By density of $A_V$ in $A$, it follows that
$M=N$.

That $p\xi \in \inv(V)$ for all $\xi$ can be equivalently stated as
$V(1\otimes p) = 1\otimes p$.  If $V$ is unitary then also $V^*(1\otimes p)
= 1\otimes p$ and so $(1\otimes p)V = 1\otimes p$.  Then, for
$a=(\id\otimes\omega)(V)\in A_V$, we see that
\[ a\cdot M = (\id\otimes\omega p)(V) = (\id\otimes\omega)((1\otimes p)V)
= (\id\otimes\omega)(1\otimes p) = \ip{M}{a} 1. \]
By continuity this holds for all $a\in A$ and so $M$ is right invariant.
\end{proof}

\begin{remark}
Let $\sigma:A^{**}\vnten A^{**} \rightarrow A^{**}\vnten A^{**}$
be the swap map, so $\sigma\Phi$ induces the opposite multiplication on $A^*$.
As $A_V$ is norm dense in $A$, which is a $C^*$-algebra, it follows that
$A_{V^*}$ and $A_V$ have the same closure (which is $A$).  Then $V^*$ will
be a corepresentation for $\sigma\Phi$, and so we conclude that
$(A,\sigma\Phi,V^*,H)$ will be a $C^*$-Eberlein algebra.

In this way, we can swap the roles of left and right invariant means.  So the
previous theorem also shows that right invariant means are unique, and if $V$
is unitary, right invariant means are invariant.
\end{remark}

\section{Compactifications}\label{sec:cmpts}

We take a category theory approach to compactifications, as follows.
By a ``compactification'' of $G\in\lcg$ we mean the universal object in
one of $\textsf{CSCH}$, $\textsf{CWCH}$, $\textsf{CSTS}$, $\textsf{CG}$.
Letting $\textsf{Cat}$ be one of these categories, this means that we seek 
$K\in \textsf{Cat}$ and a morphism $G\rightarrow K$ with the universal
property that for any $H\in\textsf{Cat}$ and any morphism $G\rightarrow H$,
there is a unique morphism $K\rightarrow H$ with the following diagram
commuting:
\[ \xymatrix{ G \ar[r] \ar[rd] & K \ar[d]^{!} \\ & H } \]
For example, as explained in \cite[Section~2]{daws}, one can think of the
\emph{Bohr} or \emph{almost periodic} compactification of $G$ as being
a universal object in $\textsf{CG}$, the category of compact groups.
This is just the ``largest'' compact group to contain a dense
homomorphic copy of our starting semigroup.  Working instead in
$\textsf{CSTS}$ we obtain the \emph{weakly almost periodic} compactification.

Let us think about what happens in $\textsf{CWCH}$ in the context of \cite{ss}.
Well, \cite[Theorem~2.17]{ss} shows that for $G\in\lcg$ there is 
$G^{\mc CH}\in\textsf{CWCH}$ and a morphism
$\epsilon_{\mc CH} : G \rightarrow G^{\mc CH}$ with dense range,
such that for any $S\in\textsf{CWCH}$ with
$\phi:G\rightarrow S$ having dense range, there is
$\theta:G^{\mc CH}\rightarrow S$ with $\theta\circ\epsilon_{\mc CH}=\phi$;
compare the definitions in \cite[Section~2.1]{ss}.  This is also a
compactification in our sense:
if $S\in\textsf{CWCH}$ and $\phi:G\rightarrow S$ is any morphism, then set
$S'$ to be the closure of the image of $\phi$, so also $S'\in\textsf{CWCH}$.
Let $\phi':G\rightarrow S'$
be the corestriction of $\phi$.  So there is
$\theta':G^{\mc CH}\rightarrow S'$, which is surjective, with
$\theta'\circ\epsilon^{\mc CH} = \phi'$.  Let $\theta$ be the composition of
$\theta'$ with the inclusion $S'\rightarrow S$, so
$\theta\circ\epsilon^{\mc CH} = \phi$.  As $\epsilon^{\mc CH}$ has dense range,
it is clear that $\theta$ is unique, as required.

We now wish to construct compactifications in our non-commutative setting,
using the notion of morphisms considered in the previous section.
Recall the notion of an embedded $C^*$-Eberlein algebra from
Definition~\ref{def:embedded}.  Such $C^*$-Eberlein algebras biject with
unitary corepresentations $U$ of $C_0^u(\G)$ such that the norm closure of
$\{ (\id\otimes\omega)(U) : \omega\in\mc B(H)_* \}$ is a C$^*$-algebra,
say $A$.

\begin{theorem}\label{thm:comp_construction}
Let $\G$ be a locally compact quantum group.  There is an (essentially unique)
unital embedded $C^*$-Eberlein algebra $(A,\Phi_A,U_A,H_A)$ such that for any
other unital embedded $C^*$-Eberlein algebra $(B,\Phi,U,H)$, we have that
$B\subseteq A$ with the inclusion $B\rightarrow A$ being a morphism.
\end{theorem}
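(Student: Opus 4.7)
The plan is to construct $A$ as the $C^*$-subalgebra of $M(C_0^u(\G))$ generated by the matrix coefficients of \emph{all} unitary corepresentations of $C_0^u(\G)$, then realise $A$ as an embedded $C^*$-Eberlein algebra via Theorem~\ref{thm:emb_cseb} by exhibiting a single ``universal'' corepresentation whose coefficients are dense in $A$. Explicitly, set
\[ A = \overline{\lin}\big\{ (\id\otimes\omega)(U) : U \text{ a unitary corepresentation of } C_0^u(\G) \text{ on } H_U,\ \omega\in\mc B(H_U)_* \big\}. \]
The first task is to check that $A$ is a $*$-subalgebra, hence a $C^*$-subalgebra. Closure under products follows from the tensor product of corepresentations: if $U,V$ are unitary corepresentations, so is $U\tp V$, and $(\id\otimes\omega_1\otimes\omega_2)(U\tp V) = (\id\otimes\omega_1)(U)\cdot(\id\otimes\omega_2)(V)$. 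Closure under adjoints uses the \emph{conjugate} corepresentation $\bar U := (\id\otimes\top)(U^*)\in M(C_0^u(\G)\otimes\mc B_0(\bar H))$; since $\top$ is a $*$-antiisomorphism, a direct computation shows that $\bar U$ is unitary and satisfies $(\Delta_u\otimes\id)(\bar U) = \bar U_{13}\bar U_{23}$, with coefficients $(\id\otimes\omega_{\bar\xi,\bar\eta})(\bar U) = (\id\otimes\omega_{\xi,\eta})(U)^*$.

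The next task is to realise $A$ as $B(U_A)$ for one corepresentation. By a Zorn's lemma argument any unitary corepresentation decomposes into cyclic subrepresentations, each with Hilbert space of dimension bounded by a fixed cardinal depending only on $C_0^u(\G)$, so the collection $\mc U$ of equivalence classes of cyclic unitary corepresentations forms a set. Put $H_A = \bigoplus_{U\in\mc U} H_U$ and $U_A = \bigoplus_{U\in\mc U} U \in M(C_0^u(\G)\otimes\mc B_0(H_A))$; block-diagonality gives that $U_A$ is unitary and that $(\Delta_u\otimes\id)(U_A) = (U_A)_{13}(U_A)_{23}$. Since each $(\id\otimes\omega)(U_A)$ is a norm-convergent sum of coefficients of the individual $U\in\mc U$, and every corepresentation is a direct sum of cyclic ones, we obtain
\[ \overline{\{(\id\otimes\omega)(U_A):\omega\in\mc B(H_A)_*\}} = A. \]
Applying Theorem~\ref{thm:emb_cseb} to $U_A$ produces the unique $\Phi_A$ and the associated $U_A'\in A^{**}\vnten\mc B(H_A)$ making $(A,\Phi_A,U_A',H_A)$ an embedded $C^*$-Eberlein algebra of $\G$.

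For the universal property, let $(B,\Phi_B,U_B',H_B)$ be any unital embedded $C^*$-Eberlein algebra, associated to a unitary corepresentation $U_B$ of $C_0^u(\G)$. The inclusion $B\subseteq A$ is immediate from the definition of $A$. To see that this inclusion is a morphism of $C^*$-Eberlein algebras, I would apply Lemma~\ref{lem:mor_bet_cstareb}: both $\Phi_A$ and $\Phi_B$ were constructed in the proof of Theorem~\ref{thm:emb_cseb} as the restrictions of the adjoint of the Arens product on $C_0^u(\G)^{***}$ to the respective subalgebras $A^*,B^*$ viewed as quotients of $C_0^u(\G)^{***}$, so the restriction map $A^*\rightarrow B^*$ coming from $B\subseteq A$ is automatically a homomorphism. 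Essential uniqueness of $A$ follows formally from the universal property. The main obstacle is the closure of $A$ under the adjoint operation, which is exactly what the conjugate-corepresentation construction resolves; a secondary difficulty is the set-theoretic legitimacy of a ``sum over all corepresentations,'' addressed by the cyclic decomposition giving a cardinal bound on the indexing set $\mc U$.
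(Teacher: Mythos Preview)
Your argument has a genuine gap in the step showing that $A$ is $*$-closed.  The element $\bar U := (\id\otimes\top)(U^*)$ is \emph{not} in general a unitary.  The map $\id\otimes\top$ is neither a homomorphism nor an anti-homomorphism on $C_0^u(\G)\otimes\mc B_0(H)$ (only the second factor is being anti-multiplied), so unitarity of $U^*$ does not transfer.  Concretely, for a finite-dimensional $U=(u_{ij})$ one has $\bar U=(u_{ij}^*)$, and $\bar U^*\bar U=1$ is equivalent to $\sum_k u_{ik}^*u_{jk}=\delta_{ij}$, i.e.\ to the \emph{transpose} $U^\top$ being a co-isometry.  This is exactly So{\l}tan's admissibility condition discussed in Section~\ref{sec:bohr}, and it is an open problem whether every finite-dimensional unitary corepresentation of a locally compact quantum group satisfies it.  In infinite dimensions the situation is worse: the ``partial transpose'' $\id\otimes\top$ need not even be bounded on $\mc B(K\otimes H)$, so $\bar U$ may fail to exist as a bounded operator.  (Your coefficient formula $(\id\otimes\omega_{\bar\xi,\bar\eta})(\bar U)=(\id\otimes\omega_{\xi,\eta})(U)^*$ is correct when $\bar U$ makes sense, and the corepresentation identity does hold formally; the problem is purely unitarity/boundedness.)  The correct contragradient $U^c=(R\otimes\top)(U)$ \emph{is} unitary, but its coefficients are $R$ applied to coefficients of $U$, which equal the adjoints only when $R=S$, i.e.\ in the Kac case treated in Section~\ref{sec:decomp}.

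The paper sidesteps this entirely: rather than taking coefficients of \emph{all} unitary corepresentations and proving $*$-closure, it indexes only over those corepresentations $U_x$ that already arise from embedded $C^*$-Eberlein algebras, so that the closure of $\{(\id\otimes\omega)(U_x):\omega\}$ is by hypothesis a $C^*$-algebra, hence $*$-closed.  Products are then obtained by a ``free-product'' direct sum $\bigoplus_n\bigoplus_{x_1\neq\cdots\neq x_n}U_{x_1}\tp\cdots\tp U_{x_n}$, whose slices give all finite products of slices of the $U_x$; since adjoints of individual slices are already approximable by slices of the same $U_x$, the closure of slices of this big corepresentation is the full $C^*$-algebra they generate.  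Your treatment of the morphism property and of the set-theoretic issue is fine, and essentially matches the paper's.
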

\begin{proof}
We claim that it is enough to find an embedded C$^*$-Eberlein $A$ such that
any other embedded C$^*$-Eberlein algebra $B$ is contained in $A$.  Indeed,
if so, then the inclusion $B\rightarrow A$ is non-degenerate, as it is unital.
As both $A$ and $B$ are embedded, the products on $A^*$ and $B^*$ are
induced by $\Delta_u$, and hence we immediately see that $A^*\rightarrow B^*$
is a homomorphism, so $B\rightarrow A$ is a morphism.

We now show the existence of $A$.  Let
\[ X = \{ (\id\otimes\omega)(U_B) : (B,\Phi_B,U_B,H_B) \text{ an embedded
C$^*$-Eberlein algebra}\}, \]
a subset of $M(C_0^u(\G))$.  For each $x\in X$ let $U_x$ be a choice of suitable
corepresentation on $H_x$, and $\omega_x\in\mc B(H_x)_*$, such that
$x = (\id\otimes\omega_x)(U_x)$.  Let $A$ be the C$^*$-algebra generated by
slices of $\{U_x:x\in X\}$, and let $U$ be the unitary corepresentation
\[ \bigoplus_{x\in X} U_x \oplus \bigoplus_{x\not=y}
U_x \tp U_y \oplus \bigoplus_{n\geq 3} \bigoplus_{x_1\not=x_2\not=\cdots\not=
x_n} U_{x_1}\tp\cdots\tp U_{x_n}, \]
acting on $H$ say.  This is a sort of ``free-product'' construction.
By slicing $U$ we generate elements in $A$; for example, looking at the second term,
we get elements (amoungst others) of the form $ (\id\otimes\omega_1)(U_x)
(\id\otimes\omega_2)(U_y) $ for $x\not=y$.
It follows that slices of $U$ will be norm dense in $A$ and hence $A$ is
an embedded C$^*$-Eberlein algebra.

By construction, for any embedded C$^*$-Eberlein algebra $(B,\Phi_B,U_B,H_B)$
and any $\omega\in\mc B(H_B)_*$, we have that $x=(\id\otimes\omega)(U_B)\in X$
and so in particular $x\in A$.  As such $x$ are dense in $B$, it follows that
$B\subseteq A$, as required.
\end{proof}

Let $E(\G) = (\mc E(\G), \Phi_\G, U_\G, H_\G)$ be the embedded $C^*$-Eberlein
algebra given by the previous theorem.  Notice that $\mc E(\G),\Phi_\G$ are
unique, but there is some, unimportant, choice in $U_\G,H_\G$.  There is a
further notational point: by definition, $U_\G$ is a ``corepresentation''
of $\mc E(\G)$, and so $U_\G \in \mc E(\G)^{**} \vnten \mc B(H_\G)$.  The
construction shows that $U_\G$ arises from Proposition~\ref{prop:subalg}
applied to a unitary corepresentation of $C_0^u(\G)$.  We shall abuse
notation, and also write $U_\G$ for this unitary in
$M(C_0^u(\G)\otimes\mc B_0(H_\G))$.

\begin{theorem}
$E(\G)$ is a compactification of $\G$ in the category of C$^*$-Eberlein
algebras.  That is, given any $C^*$-Eberlein algebra $\mathbb S = 
(B,\Phi_B,U_B,H_B)$ with $B$ unital, and any morphism $\G\rightarrow\mathbb S$,
there is a unique morphism $E(\G)\rightarrow\mathbb S$ making the diagram
commute:
\[ \xymatrix{ \G \ar[r] \ar[rd] & \mathbb S \\
& E(\G). \ar[u]_{!} } \]
\end{theorem}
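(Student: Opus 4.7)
The plan is to factor the given morphism through its image, and then to invoke the universal property that Theorem~\ref{thm:comp_construction} has already established for embedded $C^*$-Eberlein algebras. Let $\theta:B\rightarrow M(C_0^u(\G))$ represent the morphism $\G\rightarrow\mathbb S$, and set $C = \theta(B)\subseteq M(C_0^u(\G))$. Since $B$ is unital and $\theta$ is non-degenerate, $\theta(1_B) = 1$, so $C$ is a unital $C^*$-subalgebra. Viewing $\G$ as a $C^*$-Eberlein algebra via Remark~\ref{rem:lcqg_is_cstareb}, Proposition~\ref{prop:factoring} endows $C$ with a unique $C^*$-Eberlein structure making the corestriction $\theta_0:B\rightarrow C$ a morphism and the inclusion $i_C:C\rightarrow M(C_0^u(\G))$ a morphism. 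As discussed in the paragraph preceding Theorem~\ref{thm:emb_cseb}, this exhibits $C$ as an embedded $C^*$-Eberlein algebra of $\G$ in the sense of Definition~\ref{def:embedded}.

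By Theorem~\ref{thm:comp_construction}, $C\subseteq\mc E(\G)$ and the inclusion $j:C\rightarrow\mc E(\G)$ is a morphism. Using unitality of $\mc E(\G)$ to identify $M(\mc E(\G))=\mc E(\G)$, define
\[
\tilde\theta = j\circ\theta_0 : B\longrightarrow\mc E(\G).
\]
This is a unital (hence non-degenerate) $*$-homomorphism, and its dual map is $\tilde\theta^{\,*} = \theta_0^{\,*}\circ j^{\,*}:\mc E(\G)^*\rightarrow B^*$, a composition of Banach algebra homomorphisms and therefore itself a homomorphism. By Lemma~\ref{lem:mor_bet_cstareb}, $\tilde\theta$ represents a morphism $E(\G)\rightarrow\mathbb S$ in our category.

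For commutativity, the composition of morphisms $\G\rightarrow E(\G)\rightarrow\mathbb S$ corresponds at the level of $*$-homomorphisms to $B\rightarrow\mc E(\G)\rightarrow M(C_0^u(\G))$, the second arrow being the compactification inclusion $i$. Unpacking, $i\circ\tilde\theta = i\circ j\circ\theta_0 = i_C\circ\theta_0 = \theta$, as required. For uniqueness, suppose $\tilde\theta':B\rightarrow\mc E(\G)$ is another morphism representing $E(\G)\rightarrow\mathbb S$ and making the diagram commute. Then $i\circ\tilde\theta' = \theta = i\circ\tilde\theta$, and since $i$ is the inclusion $\mc E(\G)\subseteq M(C_0^u(\G))$ and hence injective, $\tilde\theta' = \tilde\theta$. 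The main conceptual step, rather than a genuine obstacle, is to recognise that Theorem~\ref{thm:comp_construction} already supplies the universal property among \emph{embedded} algebras; the present theorem then reduces to the observation that every morphism $\G\rightarrow\mathbb S$ factors canonically through its image, which is automatically an embedded $C^*$-Eberlein algebra by the machinery of Section~\ref{sec:embedded}.
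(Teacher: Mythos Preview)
Your proof is correct and follows essentially the same approach as the paper: factor through the image $\theta(B)$, recognise it as an embedded $C^*$-Eberlein algebra via the results of Section~\ref{sec:embedded}, invoke Theorem~\ref{thm:comp_construction} to get containment in $\mc E(\G)$, and deduce uniqueness from injectivity of the inclusion $\mc E(\G)\hookrightarrow M(C_0^u(\G))$. Your write-up is somewhat more explicit than the paper's in tracking why the corestriction is a morphism (composing the morphisms $\theta_0$ and $j$), whereas the paper simply asserts this; both are valid.
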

\begin{proof}
Let $\G\rightarrow\mathbb S$ be induced by $\theta:B\rightarrow M(C_0^u(\G))$.
Then $\theta(B)$ is an embedded $C^*$-Eberlein algebra and so $\theta(B)
\subseteq\mc E(\G)$.  Hence we may let $\theta_0$ be the corestriction of
$\theta$ to a map $B\rightarrow\mc E(\G)$.  As a unital $*$-homomorphism,
this is non-degenerate, and is hence a morphism; clearly the resulting
diagram commutes:
\[ \xymatrix{ M(C_0^u(\G)) & B \ar[l]_-{\theta} \ar[d]^{\theta_0} \\
& \mc E(\G). \ar@{_{(}->}[lu] } \]
If $\phi:B\rightarrow\mc E(\G)$ is another morphism making the diagram
commute, then as the inclusion $j:\mc E(\G)\rightarrow M(C_0^u(\G))$ is
injective, we see that as $j\theta_0 = \theta = j\phi$, also $\theta_0=\phi$.
Hence $\theta_0$ is unique.
\end{proof}

For any embedded $C^*$-Eberlein algebra, the generating corepresentation is
unitary, and we can apply the results of Section~\ref{sec:means} to show
the following.

\begin{theorem}
For any $\G$ the $C^*$-Eberlein algebra $E(\G)$ admits a unique invariant
mean.
\end{theorem}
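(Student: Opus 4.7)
The plan is a direct application of the machinery developed in Section~\ref{sec:means}. The key observations are (i) that $\mc E(\G)$ is unital, which is built into the construction of $E(\G)$ in Theorem~\ref{thm:comp_construction}, and (ii) that the generating corepresentation $U_\G \in \mc E(\G)^{**}\vnten\mc B(H_\G)$ is unitary. For (ii), I would note that $E(\G)$ is embedded in $\G$ in the sense of Definition~\ref{def:embedded}, so by Theorem~\ref{thm:emb_cseb} the element $U_\G$ sitting in the bidual is produced by applying Proposition~\ref{prop:subalg} to a genuine unitary corepresentation of $C_0^u(\G)$. As noted in Remark~\ref{rem:counter_example1}, that procedure preserves unitarity: since $U^*U = UU^* = 1 \otimes I$ in the multiplier algebra, the second part of Proposition~\ref{prop:subalg} transports this identity over to the $\mc E(\G)^{**}\vnten\mc B(H_\G)$ picture.

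With (i) and (ii) in hand, the existence theorem for left invariant means on unital $C^*$-Eberlein algebras (the unlabeled theorem immediately following Theorem~\ref{thm:wheninvmean}) applies to produce a left invariant mean $M$ on $\mc E(\G)$. Theorem~\ref{thm:invmeanuni} then delivers uniqueness of the left invariant mean and, precisely because $U_\G$ is unitary, also yields right invariance of $M$. Therefore $M$ is the unique two-sided invariant mean on $E(\G)$, which is what is asserted.

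There is no genuine obstacle here: the substantive work has been done in Sections~\ref{sec:embedded} and~\ref{sec:means}, and the proof is essentially just an assembly step. The only conceptual point requiring a moment of care is that the theorems of Section~\ref{sec:means} speak of a generator $V \in A^{**}\vnten\mc B(H)$ in the bidual, whereas $U_\G$ was also introduced as a unitary multiplier of $C_0^u(\G)\otimes\mc B_0(H_\G)$; once the two avatars are identified via Proposition~\ref{prop:subalg} and unitarity has been transferred as above, the hypotheses of Theorem~\ref{thm:invmeanuni} are literally met and both statements (existence and uniqueness of a two-sided invariant mean) follow at once.
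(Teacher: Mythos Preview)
Your proposal is correct and matches the paper's approach exactly: the paper simply observes that for any embedded $C^*$-Eberlein algebra the generating corepresentation is unitary, and then invokes the results of Section~\ref{sec:means}. You have merely spelled out the details of that invocation (unitality, unitarity via Proposition~\ref{prop:subalg} and Remark~\ref{rem:counter_example1}, then existence and Theorem~\ref{thm:invmeanuni}), which is precisely what the paper intends.
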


Thanks to Remark~\ref{rem:lcqg_is_cstareb}, we know that $C_0^u(\G)\subseteq
\mc E(\G)$.  The following is again known in the classical situation,
for example \cite[Corollary~3.7]{ber}.

\begin{proposition}
Let $\G$ be a locally compact, non-compact, quantum group.
Let $M$ be the unique invariant mean on $\mc E(\G)$.  Then $M$ annihilates
$C_0^u(\G)$.
\end{proposition}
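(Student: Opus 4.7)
The plan is to combine Theorem~\ref{thm:invmeanuni} with the standard fact that the left regular representation of a non-compact locally compact quantum group admits no nonzero invariant vector. Since $C_0^u(\G)\subseteq\mc E(\G)$ via the universal left regular corepresentation $\mc V\in M(C_0^u(\G)\otimes\mc B_0(L^2(\G)))$, whose slices $(\id\otimes\omega)(\mc V)$ are norm-dense in $C_0^u(\G)$, it suffices to show $M((\id\otimes\omega)(\mc V))=0$ for every $\omega\in\mc B(L^2(\G))_*$.

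I would first apply Proposition~\ref{prop:subalg}, with ambient algebra $C_0^u(\G)$ and sub-$C^*$-algebra $\mc E(\G)\subseteq M(C_0^u(\G))$, to lift $\mc V$ to a unitary $\mc V'\in\mc E(\G)^{**}\vnten\mc B(L^2(\G))$ with $(\id\otimes\omega)(\mc V')=(\id\otimes\omega)(\mc V)$. A short verification, using the description of $\Phi_\G$ from the proof of Theorem~\ref{thm:emb_cseb} as induced by the Arens product on $C_0^u(\G)^{***}$ together with the corepresentation identity for $\mc V$ over $C_0^u(\G)$, shows that $\mc V'$ satisfies axiom~(\ref{defn:cstareb:four}) for $(\mc E(\G),\Phi_\G)$. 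The proof of Theorem~\ref{thm:invmeanuni} then goes through for this unitary corepresentation (it uses only axiom~(\ref{defn:cstareb:four}) and the relation $\mu\star M=\mu(1)M$, not that $\mc V'$ is the generating corepresentation), so $p:=(M\otimes\id)(\mc V')\in\mc B(L^2(\G))$ is the orthogonal projection onto $\inv(\mc V')$. By the defining property of $\mc V'$ we then have $M((\id\otimes\omega)(\mc V))=\ip{p}{\omega}$ for every $\omega$, and the proposition reduces to the claim $\inv(\mc V')=\{0\}$.

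Any $\xi\in\inv(\mc V')$ is an invariant vector for $\mc V$ in the ordinary sense: for $\nu\in C_0^u(\G)^*$, letting $\mu\in\mc E(\G)^*$ be the restriction of the strict extension of $\nu$ to $M(C_0^u(\G))$ gives $(\mu\otimes\id)(\mc V')=(\nu\otimes\id)(\mc V)$ and $\mu(1_{\mc E(\G)})=\tilde\nu(1)$, so $(\nu\otimes\id)(\mc V)\xi=\tilde\nu(1)\xi$. The main obstacle is therefore the purely quantum-group fact that a non-compact $\G$ admits no nonzero invariant vector for $\mc V$: a unit invariant $\xi$ would force the vector state $\omega_\xi$ on $\mc B(L^2(\G))$ to agree with $\omega\mapsto\omega(1)$ on the weak$^*$-dense subset $\{(\omega\otimes\id)(W):\omega\in L^1(\G)\}\subseteq L^\infty(\hat{\G})$ (where $W=(\pi\otimes\id)(\mc V)$ is the reduced multiplicative unitary), producing a normal state on $L^\infty(\hat{\G})$ that implements the co-unit $\hat{\epsilon}$ of $\hat{\G}$; by the standard structure theory of locally compact quantum groups this is equivalent to $\hat{\G}$ being discrete, hence to $\G$ being compact, contradicting the hypothesis. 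Therefore $p=0$ and $M$ annihilates $C_0^u(\G)$.
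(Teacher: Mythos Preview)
Your argument is correct, but the paper proceeds quite differently. The paper argues by contradiction at the level of functionals: if $\varphi:=M|_{C_0^u(\G)}$ is nonzero, left-invariance gives $\mu\star\varphi=\mu(1)\varphi$ for all $\mu\in C_0^u(\G)^*$; writing $\varphi=\phi a$ by Cohen factorisation and using that $(1\otimes a)\Delta_u(x)\in C_0^u(\G)\otimes C_0^u(\G)$, one obtains $\varphi(x)1=(\id\otimes\phi)\big((1\otimes a)\Delta_u(x)\big)\in C_0^u(\G)$, forcing $1\in C_0^u(\G)$ and hence compactness. This is short and entirely self-contained, invoking no representation theory beyond the coproduct density property.

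Your route is more conceptual and fits well with the paper's Hilbert-space philosophy: you identify $M$ on slices of $\mc V$ with the projection onto $\inv(\mc V')$ via (the argument of) Theorem~\ref{thm:invmeanuni}, and then reduce to the absence of invariant vectors for the regular corepresentation. The lifting via Proposition~\ref{prop:subalg} and the verification that $\mc V'$ is a corepresentation of $(\mc E(\G),\Phi_\G)$ are handled correctly by pointing to the computation in the proof of Theorem~\ref{thm:emb_cseb}. One comment: your final step through $L^\infty(\hat\G)$ and the ``normal counit $\Leftrightarrow$ discrete'' criterion is heavier than necessary. Since slices of $\mc V$ lie in $C_0^u(\G)$ (Remark~\ref{rem:lcqg_is_cstareb}), a nonzero invariant $\xi$ immediately gives $(\id\otimes\omega_{\xi,\xi})(\mc V)=\|\xi\|^2\,1\in C_0^u(\G)$, so $C_0^u(\G)$ is unital and $\G$ is compact---which is exactly the paper's punchline, reached by your route without the detour through $\hat\G$.
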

\begin{proof}
Suppose not, so if $\varphi$ is $M$ restricted to $C_0^u(\G)$, then $\varphi$
is a non-zero state.  That $M$ is left-invariant, and that the inclusion
$C_0^u(\G) \rightarrow \mc E(\G)$ is a morphism, implies that $\mu\star\varphi
= \varphi \ip{\mu}{1}$ for all $\mu\in C_0^u(\G)^*$.

Now, by Cohen Factorisation, \cite[Proposition~A.2]{mnw}, there is
$a\in C_0^u(\G)$ and $\phi\in C_0^u(\G)^*$ with $\varphi = \phi a$.  For
$x\in C_0^u(\G)$ we know that $(1\otimes a)\Delta(x) \in C_0^u(\G)\otimes
C_0^u(\G)$, see \cite[Proposition~6.1]{ku}, and so
\[ (\id\otimes\phi)\big( (1\otimes a)\Delta(x) \big) \in C_0^u(\G). \]
Then, for $\mu\in C_0^u(\G)^*$, by left-invariance,
\[ \ip{\mu}{1} \varphi(x) = \ip{\mu\star\varphi}{x} =
\ip{\mu\otimes\phi}{(1\otimes a)\Delta(x)}. \]
As $\mu$ was arbitrary, this implies that $\varphi(x) 1 =
(\id\otimes\phi)\big( (1\otimes a)\Delta(x) \big) \in C_0^u(\G)$ and so by
a suitable choice of $x$ we conclude that $1\in C_0^u(\G)$
and so $\G$ is compact.
\end{proof}

\section{The Quantum Bohr compactification}\label{sec:bohr}

For a locally compact group $G$, we know that $\wap(G)$ splits as a direct
sum of the almost periodic functions $\ap(G)$ and those $f\in\wap(G)$ such
that $|f|^2$ is annihilated by the invariant mean; see \cite[Theorem~5.11]{dg}
or for a more semigroup-theoretic approach, see \cite[Theorem~3.11, Chapter~4]{bjm}.
For earlier results about positive definite functions (that is, in our
language, about the Eberlein compactification) were shown by Godement in
\cite{G}, see also \cite[Addenda, 16.5.2]{dix}.  Over the next two sections,
we develop analogous results for non-commutative $C^*$-Eberlein algebras,
starting with an investigation of the quantum analogue of almost periodic
functions.

In \cite{soltan}, So{\l}tan explored a similar construction to that in the
above section, but for compact quantum groups.  We recall, \cite{woro2,mvd},
that a compact quantum group is a unital $C^*$-bialgebra $(K,\Delta)$ with
the ``cancellation rules'', namely that the following sets are both linearly
dense in $A\otimes A$:
\[ \{ (a\otimes 1)\Delta(b) : a,b\in K \}, \qquad
\{ (1\otimes a)\Delta(b) : a,b\in K \}. \]
Then, for any $C^*$-bialgebra $(A,\Delta_A)$ So{\l}tan shows how to construct
a compact quantum group $(K,\Delta_K)$ and a morphism (here meaning a
non-degenerate $*$-homomorphism intertwining the coproducts) $K\rightarrow
M(A)$ such that for any compact quantum group $(B,\Delta_B)$ and any morphism
$B\rightarrow M(A)$, there is a morphism $B\rightarrow K$ making the three
morphisms commute.  Again, the construction realises $K$ as a subalgebra of
$M(A)$ and $\Delta_K$ as the restriction of $\Delta_A$.

Let $U = (U_{ij}) \in \mathbb M_n(M(A)) = M(A\otimes\mathbb M_n)$ be a
finite-dimensional (unitary, or just invertible) corepresentation of
$(A,\Delta_A)$.  We say that $U$ is admissible if the ``transpose''
$(U_{ji})$ is also invertible in the algebra $\mathbb M_n(M(A))$.  Then we can
construct $K$ as the closed linear span of elements of the form $U_{ij}$
as $U$ varies over all admissible corepresentations.

One of the problems of So{\l}tan's construction, further explored in
\cite{daws}, is that it might be hard to decide if a given finite-dimensional
unitary corepresentation is admissible (although we note that there are no
known counter-examples to the conjecture that any finite-dimensional unitary
corepresentation of a locally compact quantum group is already admissible.)
The following results are interesting from this perspective.

The second named author explored this construction more in \cite{daws}.
In particular, we introduced the notation that, when So{\l}tan's construction
is applied to $C_0^u(\G)$, we get the compact quantum group 
$(\mathbb{AP}(\G),\Delta_{\mathbb{AP}(\G)})$.  So $\mathbb{AP}(\G) \subseteq
M(C_0^u(\G))$ is the ``maximal'' compact quantum group inside $M(C_0^u(\G))$.
We remark that if we work instead with $C_0(\G)$, we may obtain a (possibly)
different compact quantum group, but the underlying Hopf $*$-algebra is the same.

\begin{proposition}\label{prop:haarstate}
For any $\G$ we have that $\mathbb{AP}(\G) \subseteq \mc E(\G)$, and the
mean on $\mc E(\G)$ restricts to the Haar state on $\mathbb{AP}(\G)$.
\end{proposition}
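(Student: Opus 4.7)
The plan is to realise $\mathbb{AP}(\G)$ as an embedded $C^*$-Eberlein algebra, so that the inclusion $\mathbb{AP}(\G) \subseteq \mc E(\G)$ follows from the universality of $\mc E(\G)$ proved in Theorem~\ref{thm:comp_construction}, and then to identify the restriction of the mean with the Haar state by a standard uniqueness argument.

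For the first step, recall that $\mathbb{AP}(\G)$ is itself a compact quantum group sitting inside $M(C_0^u(\G))$, with coproduct the restriction of $\Delta_u$. The Peter--Weyl theory for compact quantum groups \cite{woro2, mvd} then gives that matrix coefficients of finite-dimensional unitary corepresentations of $\mathbb{AP}(\G)$ form a dense $*$-subalgebra of $\mathbb{AP}(\G)$. Since $\Delta_{\mathbb{AP}(\G)}$ is the restriction of $\Delta_u$, each such finite-dimensional unitary corepresentation is simultaneously a unitary corepresentation of $C_0^u(\G)$. Taking a Hilbert-space direct sum $U$ of a set of representatives (one from each unitary equivalence class) yields a unitary corepresentation of $C_0^u(\G)$ on some Hilbert space $K$ such that $\{(\id \otimes \omega)(U) : \omega \in \mc B(K)_*\}$ has norm-dense linear span equal to $\mathbb{AP}(\G)$. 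By Theorem~\ref{thm:emb_cseb}, this equips $\mathbb{AP}(\G)$ with the structure of an embedded $C^*$-Eberlein algebra of $\G$. Since $\mathbb{AP}(\G)$ is unital, Theorem~\ref{thm:comp_construction} then gives $\mathbb{AP}(\G) \subseteq \mc E(\G)$, with the inclusion a morphism in the category of $C^*$-Eberlein algebras.

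For the second step, let $M$ be the invariant mean on $\mc E(\G)$ and set $h_0 = M|_{\mathbb{AP}(\G)}$, which is visibly a state on $\mathbb{AP}(\G)$. The Haar state on a compact quantum group is the unique left-invariant state, so it suffices to show $h_0$ is left-invariant on $\mathbb{AP}(\G)$. Given any state $\mu$ on $\mathbb{AP}(\G)$, Hahn--Banach gives a norm-preserving extension $\tilde\mu \in \mc E(\G)^*$ with $\tilde\mu(1) = 1$ (positivity of the extension is not needed). Left-invariance of $M$ on $\mc E(\G)$ gives $\tilde\mu \star_{\mc E(\G)} M = \tilde\mu(1) M = M$. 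Because the inclusion $\mathbb{AP}(\G) \hookrightarrow \mc E(\G)$ is a morphism, the dual restriction map $\mc E(\G)^* \to \mathbb{AP}(\G)^*$ is a Banach algebra homomorphism, so restricting the identity above to $\mathbb{AP}(\G)$ yields $\mu \star_{\mathbb{AP}(\G)} h_0 = h_0$. Hence $h_0$ is left-invariant, and by uniqueness of the Haar state it is the Haar state.

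The only subtle point is the Peter--Weyl decomposition ensuring that unitary corepresentations suffice to generate $\mathbb{AP}(\G)$ (since So{\l}tan's construction \emph{a priori} uses admissible corepresentations, which need not be unitary in general); this is not an issue here because $\mathbb{AP}(\G)$ is a genuine compact quantum group and its internal corepresentation theory is Peter--Weyl. All other ingredients are routine invocations of the structural results developed earlier in the paper.
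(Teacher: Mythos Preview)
Your proof is correct and follows essentially the same approach as the paper: realise $\mathbb{AP}(\G)$ as an embedded $C^*$-Eberlein algebra so that the inclusion follows from Theorem~\ref{thm:comp_construction}, then invoke uniqueness of the Haar state. The only cosmetic differences are that the paper uses the left regular representation $W$ of the compact quantum group $\mathbb{AP}(\G)$ as the generating unitary corepresentation (rather than your Peter--Weyl direct sum of finite-dimensional irreducibles, which is of course equivalent), and the paper simply asserts that the restriction of $M$ is a bi-invariant state without spelling out the Hahn--Banach/morphism argument you give.
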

\begin{proof}
Set $A=\mathbb{AP}(\G) \subseteq M(C_0^u(\G))$.  Let
$W\in M(A\otimes\mc B_0(H))$ be the left-regular representation for the
compact quantum group $A$.   Thus slices of $W$ are norm dense in $A$, and it
is easy to verify that $(A,\Delta_u|_A,W,H)$ is a $C^*$-Eberlein algebra.
So we immediately see that $A\subseteq \mc E(\G)$.
The restriction of the mean on $\mc E(\G)$ to $\mathbb{AP}(\G)$ will be a
bi-invariant state, and hence by uniqueness, is the Haar state.
\end{proof}

\begin{proposition}
Let $V$ be a finite-dimensional sub-corepresentation of $U_\G$.  Then
$V$ is admissible.
\end{proposition}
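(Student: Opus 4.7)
The strategy is to show that the matrix coefficients $V_{ij}$ all lie in the quantum Bohr compactification $\mathbb{AP}(\G)\subseteq\mc E(\G)$. Once this is established, $V$ becomes a finite-dimensional unitary corepresentation of the compact quantum group $\mathbb{AP}(\G)$, which is automatically admissible by Woronowicz's Peter--Weyl theory: the antipode $S$ on the dense Hopf $*$-algebra of matrix coefficients provides the inverse via $(V^T)^{-1}_{ij}=S(V_{ji})$.

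Let $B\subseteq\mc E(\G)$ denote the unital $C^*$-subalgebra generated by $\{V_{ij},V_{ij}^*\}$. Since $\Delta_u(V_{ij})=\sum_k V_{ik}\otimes V_{kj}$ and $\Delta_u(V_{ij}^*)=\sum_k V_{ik}^*\otimes V_{kj}^*$, the coproduct restricts to a unital $*$-homomorphism $\Delta_u|_B:B\to M(B\otimes B)$, and the invariant mean on $\mc E(\G)$ restricts to a bi-invariant state on $B$. To recognise $(B,\Delta_u|_B)$ as a compact quantum group in the sense of Woronowicz, I would verify the cancellation axioms. Unitarity of $V$ yields the key identities
\[ \sum_m (V_{mi}^*\otimes 1)\Delta_u(V_{mj}) = 1\otimes V_{ij}, \qquad \sum_m(1\otimes V_{jm})\Delta_u(V_{im}^*) = V_{ij}^*\otimes 1, \]
coming from $V^*V=1$ and $VV^*=1$ respectively; taking adjoints produces analogous identities placing $1\otimes V_{ij}^*$ and $V_{ij}\otimes 1$ into the respective closed spans. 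Left multiplication by $c\otimes 1$ preserves the form $(a\otimes 1)\Delta_u(b)$, and left multiplication by $1\otimes c$ preserves $(1\otimes a)\Delta_u(b)$, giving the density of $B\otimes\lin\{V_{ij},V_{ij}^*\}$ and $\lin\{V_{ij},V_{ij}^*\}\otimes B$ in the corresponding spans. Once full density in $B\otimes B$ is established, $(B,\Delta_u|_B)$ is a compact quantum group, and the maximality of $\mathbb{AP}(\G)$ shown in \cite{soltan} gives $B\subseteq\mathbb{AP}(\G)$.

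The principal obstacle is extending density from $B\otimes\lin\{V_{ij},V_{ij}^*\}$ to all of $B\otimes B$, which is non-trivial because $B$ contains higher-order products of generators and the $V_{ij}$'s do not commute in general. An inductive bootstrap is required, exploiting the compatibility $\Delta_u(bc)=\Delta_u(b)\Delta_u(c)$ so that $(a\otimes 1)\Delta_u(bc)=((a\otimes 1)\Delta_u(b))\Delta_u(c)$: one iteratively combines the basic identity producing $1\otimes V_{ij}$ with multiplication by $1\otimes V_{kl}$ to place $1\otimes V_{ij}V_{kl}$, and similarly longer products, into the closed cancellation span.
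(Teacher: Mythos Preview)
Your approach has a genuine circularity. The bootstrap you sketch for the cancellation laws --- extending from $B\otimes\lin\{V_{ij},V_{ij}^*\}$ to $B\otimes B$ --- is precisely the step that requires admissibility of $V$. Concretely, right-multiplying $(a\otimes 1)\Delta_u(b)$ by $\Delta_u(V_{kl})=\sum_p V_{kp}\otimes V_{pl}$ produces elements of the form $\sum_p xV_{kp}\otimes V_{ij}V_{pl}$, and to disentangle the sum over $p$ and obtain $y\otimes V_{ij}V_{kl}$ one needs to invert the matrix $(V_{kp})$ on the opposite side, i.e.\ one needs $\overline V$ (equivalently $V^T$) invertible. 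This is not an accident: So{\l}tan's construction isolates \emph{admissible} corepresentations exactly because unitarity alone is not known to yield cancellation for the generated algebra. Your argument uses only that $V$ is a finite-dimensional unitary corepresentation with coefficients in $M(C_0^u(\G))$; if it could be completed it would prove that every such corepresentation is admissible, which the paper explicitly flags as open. Notice also that you never actually use the hypothesis that $V$ sits inside $U_\G$, nor the invariant mean, beyond mentioning them.

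The paper's proof avoids the circularity by using the invariant mean on $\mc E(\G)$ directly to manufacture an intertwiner, in the spirit of Schur orthogonality. After reducing to $V$ irreducible, one forms
\[
y=(M\otimes\id)\big({U_\G'}^*(1\otimes x)\overline{V}'\big)
\]
for $x\in\mc B(K,H_\G)$; invariance of $M$ yields $U_\G'(1\otimes y)=(1\otimes y)\overline{V}'$. If $y\neq 0$ for some $x$, irreducibility of $\overline{V}'$ forces it to be equivalent to a sub-corepresentation of the unitary $U_\G'$, hence similar to a unitary, hence invertible. Non-vanishing is checked by showing that $y=0$ for all $x$ would force $\ip{M}{aV_{ij}^*}=0$ for every $a\in\mc E(\G)$, contradicting $\sum_i\ip{M}{V_{ij}V_{ij}^*}=\ip{M}{1}=1$. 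Here the hypothesis $V\subseteq U_\G$ is essential: it supplies the ambient unitary against which to average.
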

\begin{proof}
This is similar to the argument for compact quantum groups, compare for
example \cite[Proposition~6.10]{mvd}, and so we shall not give all the details.
By assumption, there is a finite-dimensional subspace $K\subseteq H_\G$ such
that, with $q:H_\G\rightarrow K$ the orthogonal projection, we have that
$U_\G(1\otimes q) = (1\otimes q)U_\G$ and $V=(1\otimes q)U_\G(1\otimes q)
\in M(C_0^u(\G)\otimes\mc B(K))$.  If we fix an orthonormal basis
$(e_i)_{i=1}^n$ for $K$, then we can view $V$ as a matrix $(V_{ij}) \in
\mathbb M_n(M(C_0^u(\G)))$.  Then $V$ is admissible if $(V_{ji})$ is
invertible, or equivalently, if $\overline{V} = (V_{ij}^*)$ is invertible.
We shall show that actually $\overline{V}$ is similar to a unitary.

As $V$ is finite-dimensional and unitary, $V$ is the direct-sum of finitely
many irreducible unitary corepresentations.  So it suffices to prove the
claim when $V$ is irreducible, which we now assume.  As shown in
\cite[Lemma~6.9]{mvd}, for example, $\overline{V}$ will also be irreducible.
As $(\id\otimes\omega)(V) \in\mc E(\G)$ for all $\omega\in\mc B(K)_*$,
we can apply Proposition~\ref{prop:subalg} to form $V'\in\mc{E}(\G)^{**}
\vnten\mc B(K)$ (actually, as $K$ is finite-dimensional, it is both easy
to construct $V'$, and we find that actually $V'\in\mc E(\G)\otimes\mc B(K)$).
Similarly let $U_\G'$ be the copy of $U_\G$ in $\mc E(\G)^{**}\vnten
\mc B(H_\G)$. It is easy to verify that $V' = (1\otimes q)U_\G'(1\otimes q)$.

Let $M\in\mc E(\G)^*$ be the invariant mean so, as after
Definition~\ref{defn:invmean}, we have that $(M\otimes\id)\tilde\Phi_\G(x)
= \ip{x}{M}1$ for $x\in\mc E(\G)^{**}$.  We have the isomorphism
\[ \mc B(H_\G) \cong \begin{pmatrix} \mc B(K) & \mc B(K^\perp,K) \\
\mc B(K,K^\perp) & \mc B(K^\perp) \end{pmatrix}, \]
and similarly we can consider $\mc E(\G)^{**}\vnten\mc B(H_\G)$ as a matrix
algebra.  Let $x\in\mc B(K,H_\G)$, and consider $1\otimes x \in
\mc E(\G)^{**}\vnten\mc B(K,H_\G)$ regarded as the ``left column'' of our
matrix algebra.  This viewpoint allows us to define
\[ y = (M\otimes\id)\big( {U_\G'}^* (1\otimes x) \overline{V}' \big)
\in \mc B(K,H_\G). \]
As $\Phi$ is a $*$-homomorphism, it is easy to see that $\overline{V}$
will also be a corepresentation, and so
\[ (\tilde\Phi_\G\otimes\id)\big( {U_\G'}^* (1\otimes x) \overline{V}' \big)
= {U_\G'}^*_{23} {U_\G'}^*_{13} (1\otimes x) \overline{V}'_{13}
   \overline{V}'_{23}. \]
Applying $ (M\otimes\id\otimes\id)$ we find that
\[ 1\otimes y = 1\otimes (M\otimes\id)\big( {U_\G'}^* (1\otimes x)
   \overline{V}' \big)
= {U_\G'}^* (1\otimes y) \overline{V}'. \]
Thus $U_\G' (1\otimes y) = (1\otimes y) \overline{V}'$.  As $\overline{V}'$
is irreducible, if $y\not=0$, in a standard way this shows that
$\overline{V}'$ is equivalent to a sub-corepresentation of $U_\G'$, and
hence is similar to a unitary, showing the same for $\overline{V}$ as required.

So we need to show that for some choice of $x$, we get a non-zero $y$.
Towards a contradiction suppose not, so $y=0$ for all choices of $x$.
For $\xi\in H_\G, \eta\in K$ let $x = \theta_{\xi,\eta}$ a rank-one operator
$K\rightarrow H_\G$.  Then, for $\alpha\in K, \beta\in H_\G$,
\[ 0 = (y\alpha|\beta) = \ip{{U_\G'}^* (1\otimes \theta_{\xi,\eta})\overline{V}'}
   {M\otimes\omega_{\alpha,\beta}}
= \ip{M}{(\id\otimes\omega_{\xi,\beta})(U'_\G)
   (\id\otimes\omega_{\alpha,\eta})(\overline{V}')}. \]
Now, elements of the form $(\id\otimes\omega_{\xi,\beta})(U'_\G)$ are linearly
dense in $\mc E(\G)$, and $(\id\otimes\omega_{\alpha,\eta})(\overline{V}')
= (\id\otimes\omega_{\alpha,\eta})(V')$ is a linear combination of the
elements $V_{ij}^* \in \mc E(\G)$.  Thus $\ip{M}{a V_{ij}^*}=0$ for all
$a\in\mc E(\G)$, in particular, $\ip{M}{V_{ij} V_{ij}^*}=0$.  As $\ip{M}{1}=1$
and $V$ is unitary, this leads to the required contradiction.
\end{proof}

\subsection{Compact vectors}\label{sec:cmptvecs}

In this section, we generalise the classical notion of a \emph{compact vector}
for a unitary group representation. Classically, see \cite[Definition~1.7,
Lemma~1.8]{br} if $\pi$ is a unitary representation of a locally compact group
$G$ on a Hilbert space $H$, then $\xi\in H$ is \emph{compact} if and only if
$\{ \pi(g)\xi : g\in G \}$ is relatively compact in $H$, if and only if
$g\mapsto (\pi(g)\xi|\xi)$ is an almost periodic function.  The following is
hence a slightly stronger definition, but it is appropriate for our needs.

\begin{definition}
Let $\G$ be a locally compact group, and let $U\in M(C_0^u(\G)\otimes\mc B_0(H))$
be a unitary corepresentation.  Define $\xi\in H$ to be a \emph{compact vector} if
\[ (\id\otimes\omega_{\xi,\eta})(U) \in \mathbb{AP}(C_0^u(\G))
\qquad (\eta\in H). \]
It is easy to see that the collection of all compact vectors forms a closed
subspace of $H$, say $H_c$.
\end{definition}

From now on, fix a unitary corepresentation $U$ and let $H_c$ be the subspace
of compact vectors.

\begin{lemma}\label{lem:rest_com_vecs}
The corepresentation $U$ restricts to $H_c$ forming a corepresentation
$V\in M(C_0^u(\G)\otimes\mc B_0(H_c))$ say.  As $U$ is unitary, $U$ also
restricts to $H_c^\perp$ forming a corepresentation $V_0$ say.  Then
$V_0$ has no non-zero compact vectors.
\end{lemma}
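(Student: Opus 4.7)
The plan is to prove the lemma in three parts: first showing that $H_c$ is $U$-invariant directly from the definition; second, using that $U$ is unitary together with the compact quantum group structure on $\mathbb{AP}(\G)$ to force $H_c^\perp$ to be $U$-invariant too; third, deriving a contradiction from a nonzero compact vector of $V_0$.

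First I would prove $U$-invariance of $H_c$. For $\xi\in H_c$, $\eta\in H$ and $\mu\in C_0^u(\G)^*$, set $T_\mu=(\mu\otimes\iota)(U)$ and $a=(\id\otimes\omega_{\xi,\eta})(U)$. A direct calculation with the corepresentation identity $(\Delta_u\otimes\id)(U)=U_{13}U_{23}$ (analogous to the ``$a\cdot\mu$''-computation appearing in the proof of Theorem~\ref{thm:emb_cseb}) gives
\[ a\cdot\mu := (\id\otimes\mu)\Delta_u(a) = (\id\otimes\omega_{T_\mu\xi,\eta})(U). \]
Since $\mathbb{AP}(\G)$ is a unital sub-$C^*$-bialgebra of $M(C_0^u(\G))$, the right $C_0^u(\G)^*$-action preserves it, so $a\cdot\mu\in\mathbb{AP}(\G)$ for every $\eta$, i.e.\ $T_\mu\xi\in H_c$. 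Equivalently, with $p_c$ the projection onto $H_c$, $U(1\otimes p_c)=(1\otimes p_c)U(1\otimes p_c)$.

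Next I would use unitarity of $U$. Consider the compression $V = (1\otimes p_c)U(1\otimes p_c)$, viewed in $M(C_0^u(\G)\otimes\mc B_0(H_c))$ by identifying the corner. Taking adjoints in the previous display yields $(1\otimes p_c)U^*=(1\otimes p_c)U^*(1\otimes p_c)$, from which $V^*V=(1\otimes p_c)U^*U(1\otimes p_c)=1\otimes p_c$, so $V$ is an isometric corepresentation, and its slices all lie in $\mathbb{AP}(\G)$ by definition of $H_c$. Using Proposition~\ref{prop:subalg} with $B=\mathbb{AP}(\G)$, I would transport $V$ into a corepresentation of the compact quantum group $\mathbb{AP}(\G)$, and then invoke the Peter--Weyl decomposition for compact quantum groups (as in \cite{woro2,mvd}) to write $V$ as an orthogonal direct sum of finite-dimensional subcorepresentations. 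Each such summand is a finite-dimensional isometry, hence unitary, and therefore $V$ itself is unitary: $VV^*=1\otimes p_c$. Unwinding, this is exactly $Up_cU^*=p_c$, so $p_c$ commutes with $U$ and $H_c^\perp$ is also $U$-invariant; thus $V_0=(1\otimes p_c^\perp)U(1\otimes p_c^\perp)$ is a unitary corepresentation on $H_c^\perp$.

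Finally, for the no-compact-vector statement, suppose $\xi\in H_c^\perp$ is compact for $V_0$, so $(\id\otimes\omega_{\xi,\eta})(V_0)\in\mathbb{AP}(\G)$ for all $\eta\in H_c^\perp$. By the block-diagonal decomposition $U=V\oplus V_0$, one has $(\id\otimes\omega_{\xi,\eta})(U)=0$ for $\eta\in H_c$ and agrees with the $V_0$-coefficient for $\eta\in H_c^\perp$; hence $(\id\otimes\omega_{\xi,\eta})(U)\in\mathbb{AP}(\G)$ for every $\eta\in H$. Thus $\xi\in H_c\cap H_c^\perp=\{0\}$.

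The main obstacle is the unitarization step in the second part: passing from an isometric corepresentation of $C_0^u(\G)$ whose slices happen to lie in $\mathbb{AP}(\G)$ to an honest corepresentation of the compact quantum group $\mathbb{AP}(\G)$, to which the finite-dimensional decomposition theorem can be applied. Without the compact quantum group machinery, isometry of $V$ does not obviously upgrade to unitarity, and the naive ``symmetric in both arguments'' argument that works classically (using closure of $\ap(G)$ under inversion) has no direct analogue because the antipode of $\G$ is in general unbounded and $*$-reversing.
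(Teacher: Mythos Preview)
Your Parts~1 and~3 are correct and essentially identical to the paper's proof: the same computation
\[
(\id\otimes\omega_{U_\mu\xi,\eta})(U)=(\id\otimes\mu)\Delta_u\big((\id\otimes\omega_{\xi,\eta})(U)\big),
\]
together with $\Delta_u(\mathbb{AP}(C_0^u(\G)))\subseteq\mathbb{AP}(C_0^u(\G))\otimes\mathbb{AP}(C_0^u(\G))$, gives invariance of $H_c$, and your Part~3 is simply the contrapositive of the paper's argument using the block decomposition $U=V\oplus V_0$.

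Where you diverge is Part~2. The paper does not argue this point at all; the phrase ``As $U$ is unitary'' in the statement is an appeal to a general fact about unitary corepresentations of locally compact quantum groups that has nothing to do with $\mathbb{AP}(\G)$. Namely, a unitary corepresentation $U$ of $C_0^u(\G)$ on $H$ corresponds (via the universal bicharacter $\mc W\in M(C_0^u(\G)\otimes C_0^u(\hh\G))$, see \cite{ku}) to a non-degenerate $*$-representation $\phi$ of $C_0^u(\hh\G)$ on $H$, with $U_\mu=\phi\big((\mu\otimes\id)(\mc W)\big)$. Since such slices are norm-dense in $C_0^u(\hh\G)$, a closed subspace is $U$-invariant if and only if it is $\phi$-invariant; and for any $*$-representation of a $C^*$-algebra, invariant subspaces are automatically reducing. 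Hence $p_c$ commutes with $U$ and both $V,V_0$ are unitary with no further work.

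Your route through Peter--Weyl on $\mathbb{AP}(\G)$ is therefore unnecessary, and as you yourself flag, it is also problematic. Proposition~\ref{prop:subalg} only gives an element of $\mathbb{AP}(\G)^{**}\vnten\mc B(H_c)$, not a corepresentation in $M(\mathbb{AP}(\G)\otimes\mc B_0(H_c))$ to which the Peter--Weyl decomposition applies. Producing a genuine corepresentation of the compact quantum group $\mathbb{AP}(\G)$ from $V$ is precisely the content of the remainder of Section~\ref{sec:cmptvecs}, culminating in Corollary~\ref{corr:is_nondeg}; and that development explicitly \emph{uses} that $V$ is already unitary on $H_c$ (see the line ``of course $V$ is unitary on all of $H_c$'' in the proof just before Corollary~\ref{corr:is_nondeg}). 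So your Part~2, as written, would be circular.
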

\begin{proof}
We wish to show that for $\mu\in C_0^u(\G)^*$ and $\xi\in H_c$, we have that
$(\mu\otimes\id)(U)\xi \in H_c$.  Set $U_\mu = (\mu\otimes\id)(U)$, so for
$\eta\in H$, we see that
\[ (\id\otimes \omega_{U_\mu \xi,\eta})(U)
= (\id\otimes \omega_{\xi,\eta})(U(1\otimes U_\mu))
= (\id\otimes\mu\otimes\omega_{\xi,\eta})(U_{13} U_{23})
= (\id\otimes\mu)\Delta\big( (\id\otimes\omega_{\xi,\eta})(U) \big). \]
As $(\id\otimes\omega_{\xi,\eta})(U)\in\mathbb{AP}(C_0^u(G))$ we see that
\[ \Delta\big( (\id\otimes\omega_{\xi,\eta})(U) \big)  \in 
\mathbb{AP}(C_0^u(G)) \otimes \mathbb{AP}(C_0^u(G)), \]
the minimal tensor product, and so slices are also in
$\mathbb{AP}(C_0^u(G))$ as required to show that $U_\mu\xi\in H_c$ as claimed.

We next show that $V_0$ has no non-zero compact vectors.  Indeed, if
$\xi\in H_c^\perp$ is non-zero then $\xi$ is not compact for $U$, and so there
is $\eta = \eta_c + \eta_0 \in H_c \oplus H_c^\perp$ with
$(\id\otimes\omega_{\xi,\eta})(U) \not \in \mathbb{AP}(C_0^u(\G))$.
As $\xi\in H_c^\perp$ and $H_c^\perp$ is invariant for $U$, it follows that
\[ (\id\otimes\omega_{\xi,\eta})(U)
= (\id\otimes\omega_{\xi,\eta_0})(U)
= (\id\otimes\omega_{\xi,\eta_0})(V_0), \]
which shows that $\xi$ is not compact for $V_0$, as required.
\end{proof}

We shall now proceed to show that $V$ can be ``identified'' with a unitary
corepresentation of $\mathbb{AP}(C_0^u(G))$; part of our effort will be to
make this statement precise.

At this point, we shall need a little more of the theory of (locally)
compact quantum groups, which we now sketch.  For a compact quantum group $A$,
let $\varphi$ be
the Haar state for $A$, with GNS space $H$ and representation $\pi:A\rightarrow
\mc B(H)$.  Then, for example using the regular corepresentation of $A$ on $H$,
we can show that $\pi(A)$ also admits a coproduct making $\pi$ a morphism.
We call $\pi(A)$ the \emph{reduced form} of $A$, and with $\varphi$ as the
left and right Haar weights, this becomes a locally compact quantum group
in the sense of \cite{kv}.  Let $\K$ be the locally compact quantum group
thus induced, so $\pi(A) = C(\K)$ and $H=L^2(\K)$.  Let $L^\infty(\G)$ be the
von Neumann algebra generated by $C(\K)$ in $\mc B(L^2(\K))$; again an argument
using the regular representation shows that $L^\infty(\G)$ can be given a
weak$^*$-continuous coproduct which extends that on $C(\K)$.  Let $L^1(\K)$ be
the predual of $L^\infty(\G)$ (or, equivalently, the collection of normal
functionals on $C(\K)$) which becomes a Banach algebra for the pre-adjoint of
the coproduct on $L^\infty(\K)$.  See, for example, \cite[Section~8]{kv}.

Let $\G$ be a (locally) compact quantum group.  As the antipode $S$ is in
general unbounded, the natural way to define a $*$-structure on $L^1(\G)$
fails.  Instead, we let $L^1_\sharp(\G)$ be the collection of $\omega\in L^1(\G)$
such that there is $\omega^\sharp\in L^1(\G)$ with
\[ \ip{x}{\omega^\sharp} = \overline{ \ip{S(x)^*}{\omega} }
\qquad (x\in D(S)\subseteq L^\infty(\G)). \]
As $S(S(x)^*)^*=x$ for $x\in D(S)$, it follows that $\sharp$ gives an
involution on $L^1_\sharp(\G)$.  Under the norm $\|\omega\|_\sharp =
\max( \|\omega\|, \|\omega^\sharp\| )$ also $L^1_\sharp(\G)$ becomes a
Banach algebra.  The left-regular representation
\[ \lambda:L^1(\G)\rightarrow C_0(\hh\G); \quad \omega\mapsto
(\omega\otimes\id)(W) \]
is a homomorphism with range dense in $C_0(\hh\G)$, and it restricts
to a $*$-homomorphism on $L^1_\sharp(\G)$.  Indeed, $\omega\in L^1_\sharp(\G)$
if and only if there is $\tau\in L^1(\G)$ with $\lambda(\tau) =
\lambda(\omega)^*$, and then $\tau=\omega^\sharp$.  A ``smearing'' argument
using the scaling group shows that $L^1_\sharp(\G)$ is norm dense in $L^1(\G)$;
see \cite[Section~3]{ku} and \cite[Definition~2.3]{kvvn} and the discussion after.

Let $\K$ be the compact quantum group induced by $\mathbb{AP}(C_0^u(\G))$, so
that $C(\K)$ is the reduced form of $\mathbb{AP}(C_0^u(\G))$, and we have the
quotient map $\phi: \mathbb{AP}(C_0^u(\G)) \rightarrow C(\K)$ (as shown in
\cite{daws} this map can indeed fail to be an isomorphism).  We can then
restrict $\phi^*:C(\K)^*\rightarrow \mathbb{AP}(C_0^u(G))^*$, which is an
isometric algebra homomorphism, to $L^1(\K)$, still denoted by $\phi^*$.
For $\omega\in L^1(\K)$ let $\mu \in M(C_0^u(\G))^*$ be a Hahn-Banach extension
of $\phi^*(\omega)$, recalling that
$\mathbb{AP}(C_0^u(G)) \subseteq M(C_0^u(\G))$.
Then define $(\mu\otimes\id)(V)\in\mc B(H_c)$ in the usual way:
\[ \big( (\mu\otimes\id)(V) \xi \big| \eta \big)
= \ip{\mu}{(\id\otimes\omega_{\xi,\eta})(V)}
\qquad (\xi,\eta\in H_c). \]
As $(\id\otimes\omega_{\xi,\eta})(V) \in \mathbb{AP}(C_0^u(G))$ we hence see
that $(\mu\otimes\id)(V)$ depends only on $\omega$ and not on the particular
extension chosen.  Hence define this to be $\pi(\omega)\in\mc B(H_c)$.

\begin{lemma}
The map $\pi:L^1(\K)\rightarrow\mc B(H_c)$ is a homomorphism, and when
restricted to $L^1_\sharp(\K)$ is a $*$-homomorphism.
\end{lemma}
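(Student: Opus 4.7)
The plan is to deduce multiplicativity of $\pi$ directly from the fact that $V$ is a unitary corepresentation of $C_0^u(\G)$, and to handle the $\sharp$-involution by first identifying $V$ with a unitary corepresentation of the compact quantum group $\mathbb{AP}(C_0^u(\G))$, pushing it through $\phi$ to obtain a corepresentation $V_\K$ of the reduced form $\K$, and invoking the standard antipode identity $(S_\K\otimes\id)(V_\K)=V_\K^*$.

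For multiplicativity, let $\omega_1,\omega_2\in L^1(\K)$ and choose Hahn--Banach extensions $\mu_i\in M(C_0^u(\G))^*$ of $\phi^*(\omega_i)$. Since $V\in M(C_0^u(\G)\otimes\mc B_0(H_c))$ satisfies $(\Delta_u\otimes\id)(V)=V_{13}V_{23}$, we have
\[ \pi(\omega_1)\pi(\omega_2) = (\mu_1\otimes\id)(V)(\mu_2\otimes\id)(V) = (\mu_1\star\mu_2\otimes\id)(V). \]
Because the inclusion $\mathbb{AP}(C_0^u(\G))\hookrightarrow M(C_0^u(\G))$ is a $C^*$-bialgebra morphism, the restriction of $\mu_1\star\mu_2$ to $\mathbb{AP}(C_0^u(\G))$ equals $\phi^*(\omega_1)\star\phi^*(\omega_2)$, and $\phi^*$ being a Banach algebra homomorphism makes this equal to $\phi^*(\omega_1\omega_2)$. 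Since every slice of $V$ lies in $\mathbb{AP}(C_0^u(\G))$, $\pi(\omega)$ depends only on the restriction of any Hahn--Banach extension to $\mathbb{AP}(C_0^u(\G))$, and we conclude $\pi(\omega_1)\pi(\omega_2)=\pi(\omega_1\omega_2)$.

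For the $*$-property on $L^1_\sharp(\K)$, I would first upgrade $V$ from an element of $M(C_0^u(\G)\otimes\mc B_0(H_c))$ with slices in $\mathbb{AP}(C_0^u(\G))$ to a genuine unitary in $M(\mathbb{AP}(C_0^u(\G))\otimes\mc B_0(H_c))$. By the previous proposition, every finite-dimensional sub-corepresentation of $V$ is admissible, hence has matrix coefficients in $\mathbb{AP}(C_0^u(\G))$; the definition of $H_c$ ensures $V$ decomposes as a Hilbert space direct sum of such finite-dimensional sub-corepresentations (a Peter--Weyl-style decomposition), which gives the required upgrade. Pushing through $\phi$ yields a unitary corepresentation $V_\K := (\phi\otimes\id)(V) \in M(C(\K)\otimes\mc B_0(H_c))$ of the reduced compact quantum group $\K$, and the construction of $\pi$ gives $\pi(\omega) = (\omega\otimes\id)(V_\K)$ for all $\omega\in L^1(\K)$. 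The identity $(S_\K\otimes\id)(V_\K)=V_\K^*$, valid on matrix coefficients (which lie in $D(S_\K)$), then translates via the defining relation $\ip{x}{\omega^\sharp}=\overline{\ip{S_\K(x)^*}{\omega}}$ into the equality
\[ (\pi(\omega^\sharp)\xi|\eta) = \overline{\ip{\omega}{(\id\otimes\omega_{\eta,\xi})(V_\K)}} = \overline{(\pi(\omega)\eta|\xi)} \qquad (\xi,\eta\in H_c), \]
so $\pi(\omega^\sharp)=\pi(\omega)^*$ for $\omega\in L^1_\sharp(\K)$.

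The principal obstacle is the Peter--Weyl-style step: one must verify that $H_c$ is the closed linear span of finite-dimensional $V$-invariant subspaces, so that $V$ genuinely lies in $M(\mathbb{AP}(C_0^u(\G))\otimes\mc B_0(H_c))$. This combines the characterisation of compact vectors (slices in $\mathbb{AP}(C_0^u(\G))$) with the previous proposition's admissibility result and the standard decomposition theory for unitary corepresentations of compact quantum groups. Once this structural point is settled, the remaining ingredients --- the corepresentation identity for products and the antipode identity $(S_\K\otimes\id)(V_\K)=V_\K^*$ on matrix coefficients --- are routine manipulations of slice maps.
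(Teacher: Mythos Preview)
There are two genuine gaps.

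For multiplicativity, you write $(\mu_1\otimes\id)(V)(\mu_2\otimes\id)(V) = (\mu_1\star\mu_2\otimes\id)(V)$ for Hahn--Banach extensions $\mu_i\in M(C_0^u(\G))^*$. But the product $\mu_1\star\mu_2$ has no a priori meaning: the coproduct $\Delta_u$ extends only strictly to a map $M(C_0^u(\G))\to M(C_0^u(\G)\otimes C_0^u(\G))$, and arbitrary functionals on $M(C_0^u(\G))$ need not be strictly continuous, so they cannot simply be paired against elements of that multiplier algebra. The paper's proof addresses exactly this point by replacing each $\mu_i$ with a bounded net $(\mu^{(i)}_\alpha)$ in $C_0^u(\G)^*$ (where the convolution $\star$ is honestly defined) and computing $\pi(\omega_1)\pi(\omega_2)$ via iterated limits; the key fact that makes the limits tractable is that for $x\in\mathbb{AP}(C_0^u(\G))$ one has $\Delta(x)\in\mathbb{AP}(C_0^u(\G))\otimes\mathbb{AP}(C_0^u(\G))$ in the \emph{minimal} tensor product, so that slicing on one leg yields norm convergence. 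The authors explicitly remark that working directly with $\mu_i\in M(C_0^u(\G))^*$ is tempting but fails for precisely this reason.

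For the $*$-property, your plan to upgrade $V$ to a unitary in $M(\mathbb{AP}(C_0^u(\G))\otimes\mc B_0(H_c))$ via a Peter--Weyl decomposition of $H_c$ is circular. That $V$ arises from a genuine corepresentation of $\mathbb{AP}(C_0^u(\G))$---equivalently, that $H_c$ is the closed span of finite-dimensional invariant subspaces---is exactly the conclusion of the remaining results in this subsection, and the present lemma is one of the ingredients used to establish it. The definition of $H_c$ only guarantees that slices of $V$ lie in $\mathbb{AP}(C_0^u(\G))$, which is strictly weaker. The paper's argument sidesteps this entirely: since the inclusion $\mathbb{AP}(C_0^u(\G))\hookrightarrow M(C_0^u(\G))$ is a quantum group morphism it intertwines antipodes, so each $b=(\id\otimes\omega_{\xi,\eta})(V)\in\mathbb{AP}(C_0^u(\G))$ lies in $D(S_{\mathbb{AP}})$ with $S_{\mathbb{AP}}(b)=(\id\otimes\omega_{\xi,\eta})(V^*)$, and then $\phi(b)\in D(S_\K)$ with $S_\K(\phi(b))=\phi((\id\otimes\omega_{\xi,\eta})(V^*))$. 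The relation $\pi(\omega^\sharp)=\pi(\omega)^*$ then follows from a direct computation using the defining property of $\omega^\sharp$, with no need to know that $V$ is a corepresentation of $\mathbb{AP}(C_0^u(\G))$.
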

\begin{proof}
Let $\omega\in L^1(\K)$ and let $\mu\in M(C_0^u(\G))^*$ be an extension of
$\phi^*(\omega)$.  Treating $M(C_0^u(\G))$ as a subalgebra of $C_0^u(\G)^{**}$
we can further extend $\mu$ and then find a bounded net $(\mu_\alpha)$ in
$C_0^u(\G)^*$ with
\[ \lim_\alpha \ip{\mu_\alpha}{x} = \ip{\phi(x)}{\omega}
\qquad (x\in\mathbb{AP}(C_0(\G))). \]
It then follows that
\[ (\pi(\omega)\xi|\eta) = \ip{\phi((\id\otimes\omega_{\xi,\eta})(V))}{\omega}
= \lim_\alpha ((\mu_\alpha\otimes\id)(V)\xi|\eta)
\qquad (\xi,\eta\in H_c). \]
For $i=1,2$, let $\omega_i\in L^1(\K)$ and choose a bounded net
$(\mu^{(i)}_\alpha)$ as above.  Then, for $\xi,\eta\in H_c$,
\begin{align*} (\pi(\omega_1)\pi(\omega_2)\xi|\eta)
&= \lim_\alpha ((\mu^{(1)}_\alpha\otimes\id)(V) \pi(\omega_2)\xi|\eta)
= \lim_\alpha \lim_\beta
((\mu^{(1)}_\alpha\otimes\id)(V) (\mu^{(2)}_\beta\otimes\id)(V) \xi|\eta) \\
&= \lim_\alpha \lim_\beta \ip{\mu^{(1)}_\alpha\otimes\mu^{(2)}_\beta\otimes
\omega_{\xi,\eta}}{V_{13} V_{23}}
= \lim_\alpha \lim_\beta \ip{\mu^{(1)}_\alpha\star\mu^{(2)}_\beta\otimes
\omega_{\xi,\eta}}{V},
\end{align*}
as $V$ is a corepresentation.  Now let $x\in\mathbb{AP}(C_0^u(\G))$.  As
$\Delta(x) \in \mathbb{AP}(C_0^u(\G)) \otimes \mathbb{AP}(C_0^u(\G))$ the minimal
tensor product, we see that
\[ \lim_\alpha (\mu_\alpha^{(1)}\otimes\id)\Delta(x)
= (\phi^*(\omega_1)\otimes\id)\Delta(x), \]
with convergence in norm.  Thus
\begin{align*}
\lim_\alpha \lim_\beta \ip{\mu^{(1)}_\alpha\star\mu^{(2)}_\beta}{x}
&= \lim_\alpha \lim_\beta
  \ip{\mu^{(2)}_\beta}{(\mu_\alpha^{(1)}\otimes\id)\Delta(x)}
= \lim_\alpha \ip{\phi^*(\omega_2)}{(\mu_\alpha^{(1)}\otimes\id)\Delta(x)} \\
&= \ip{\phi^*(\omega_2)}{(\phi^*(\omega_1)\otimes\id)\Delta(x)}
= \ip{\phi^*(\omega_1)\star\phi^*(\omega_2)}{x}
= \ip{\phi^*(\omega_1\star\omega_2)}{x}.
\end{align*}
Combining this with the above, we see that
\begin{align*} (\pi(\omega_1)\pi(\omega_2)\xi|\eta)
= \ip{\phi^*(\omega_1\star\omega_2)}{(\id\otimes\omega_{\xi,\eta})(V)}
= (\pi(\omega_1\star\omega_2)\xi|\eta).
\end{align*}
So $\pi$ is a homomorphism.

As a remark, it would have been tempting to work more directly with
$\mu_i\in M(C_0(\G))^*$, but then it's not clear what meaning $\mu_1\star\mu_2$
has; the above argument is essentially a resolution of this problem.

We now show that $\pi$ is a $*$-homomorphism on $L^1_\sharp(\G)$.  We know
that $\phi \circ S_{\mathbb{AP}} \subseteq S_{\K} \circ \pi$, see
\cite[Remark~12.1]{ku}.  A similar remark applies to the inclusion
$\mathbb{AP}(C_0(\K)) \subseteq M(C_0(\G))$ which is a morphism.  So for each
$\omega\in\mc B(H_c)_*$ we have that $(\id\otimes\omega)(V) \in
D(S_{\mathbb{AP}})$ and $S_{\mathbb{AP}}((\id\otimes\omega)(V))
= (\id\otimes\omega)(V^*)$ and hence also $\phi((\id\otimes\omega)(V)) \in
D(S_{\K})$ and $S_{\K}(\phi((\id\otimes\omega)(V)))
= \phi((\id\otimes\omega)(V^*))$.  So, for $\omega\in L^1_\sharp(\K)$,
\begin{align*} (\pi(\omega^\sharp)\xi|\eta) &=
\ip{\phi((\id\otimes\omega_{\xi,\eta})(V))}{\omega^\sharp}
= \overline{ \ip{S_\K(\phi((\id\otimes\omega_{\xi,\eta})(V)))^*}{\omega} }
= \overline{ \ip{\phi((\id\otimes\omega_{\eta,\xi})(V))}{\omega} } \\
&= \overline{ (\pi(\omega)\eta|\xi) }
= (\pi(\omega)^*\xi|\eta),
\end{align*}
as required.
\end{proof}

It is not clear to us that $\pi$ is non-degenerate, but see
Corollary~\ref{corr:is_nondeg} below.  However, let $H_{cc}$ be
the closed linear span of $\{ \pi(\omega)\xi : \xi\in H_c,
\omega\in L^1(\K) \}$, and note that if we replace $L^1(\K)$ by
$L^1_\sharp(\K)$ we get the same closure.  Then, for $\eta\in H_{cc}^\perp$
we find that
\[ (\pi(\omega)\eta|\xi) = (\eta|\pi(\omega^\sharp)\xi) = 0
\qquad (\xi\in H_c, \omega\in L^1_\sharp(\K)), \]
and so $\pi$ restricts to the zero representation on $H_{cc}^\perp$.
As products are dense in $L^1(\K)$ it follows that the closed linear span of
$\{ \pi(\omega)\xi: \omega\in L^1(\K), \xi\in H_{cc} \}$ equals $H_{cc}$ again,
and so $\pi$, restricted to $H_{cc}$, is non-degenerate, say giving $\pi_{cc}$.

\begin{proposition}
There is a unitary corepresentation $Y$ of $\mathbb{AP}(C_0^u(\G))$ on
$H_{cc}$ such that, if we zero-extend $Y$ to a degenerate representation
on $H_c$, then $(\id\otimes\omega)(Y) = (\id\otimes\omega)(V) \in M(C_0^u(\G))$
for all $\omega\in\mc B(H_c)_*$.
\end{proposition}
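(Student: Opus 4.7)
The plan is to construct $Y$ by integrating the non-degenerate $*$-representation $\pi_{cc}$ via the Kustermans correspondence (\cite[Section~5]{ku}) between non-degenerate $*$-representations of $L^1_\sharp(\K)$ and unitary corepresentations of $C^u_0(\K)$. As $\mathbb{AP}(C_0^u(\G))$ is the universal compact quantum group surjecting onto $C(\K)$, one identifies $C^u_0(\K)$ with $\mathbb{AP}(C_0^u(\G))$, producing $Y \in M(\mathbb{AP}(C_0^u(\G))\otimes\mc B_0(H_{cc}))$ with $(\omega\otimes\id)(Y) = \pi_{cc}(\omega)$ for all $\omega \in L^1_\sharp(\K)$, and by continuity for all $\omega \in L^1(\K)$.

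To verify the identity $(\id\otimes\omega)(Y) = (\id\otimes\omega)(V)$ for $\omega=\omega_{\xi,\eta}$ with $\xi,\eta\in H_{cc}$, I would first observe that both sides become equal after applying $\phi:\mathbb{AP}(C_0^u(\G))\to C(\K)$. Indeed, for $\omega'\in L^1(\K)$,
\[ \ip{\omega'}{\phi\big((\id\otimes\omega_{\xi,\eta})(Y)\big)} = (\pi_{cc}(\omega')\xi|\eta) = (\pi(\omega')\xi|\eta) = \ip{\omega'}{\phi\big((\id\otimes\omega_{\xi,\eta})(V)\big)}, \]
and $L^1(\K)$ separates $C(\K)$. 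To lift this to equality inside $\mathbb{AP}(C_0^u(\G))$ when $\phi$ is non-injective, I would use the Peter--Weyl decomposition of the compact quantum group $\K$: $Y$ splits into finite-dimensional irreducible sub-corepresentations whose matrix coefficients live in the Hopf $*$-algebra $\mathrm{Pol}(\K)$, on which $\phi$ restricts to the identity. Matching each summand with a finite-dimensional sub-corepresentation of $V$ on the same subspace of $H_{cc}$ (admissible by the preceding proposition, and therefore with matrix coefficients already in $\mathrm{Pol}(\K)$) then yields the required equality inside $\mathbb{AP}(C_0^u(\G))$ rather than merely inside $C(\K)$.

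For the zero-extension to reproduce $V$ on all of $H_c$, I would show $H_c = H_{cc}$ by running the same analysis on $V$ restricted to $K := H_c \cap H_{cc}^\perp$: its matrix coefficients lie in $\mathbb{AP}(C_0^u(\G))$ by the definition of compact vector, and if $K$ were nonzero then a Peter--Weyl decomposition of $V|_K$ via admissibility of its finite-dimensional sub-corepresentations would integrate to a non-zero representation of $L^1(\K)$ on $K$, contradicting $\pi|_{K} = 0$ (established in the paragraph preceding the proposition). Hence $H_c\cap H_{cc}^\perp=\{0\}$, and the zero-extension trivially matches $V$ on all of $H_c$.

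The main obstacle throughout is the gap between $\mathbb{AP}(C_0^u(\G))$ and its reduced form $C(\K)$: functionals in $L^1(\K)$ separate only $C(\K)$, so a direct slice comparison is blind to $\ker \phi$. Routing the argument through the Peter--Weyl decomposition, where the universal and reduced forms share the dense Hopf $*$-algebra $\mathrm{Pol}(\K)$ and where the admissibility of finite-dimensional sub-corepresentations of $V$ identifies those summands with matrix coefficients in $\mathrm{Pol}(\K)$, is the key device for resolving this.
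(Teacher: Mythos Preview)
Your overall strategy---integrate $\pi_{cc}$ to a unitary corepresentation and then compare slices---matches the paper's opening moves, but the lifting step has a genuine gap. You correctly observe that $\phi\big((\id\otimes\omega)(Y)\big) = \phi\big((\id\otimes\omega)(V)\big)$ in $C(\K)$ and that the obstruction is $\ker\phi$. Your proposed fix via Peter--Weyl is circular: you assert that each finite-dimensional $Y$-invariant subspace $K\subseteq H_{cc}$ is also $V$-invariant, so that $V|_K$ is a finite-dimensional corepresentation with coefficients in $\mathrm{Pol}(\K)$. But $Y$-invariance of $K$ only gives invariance under the operators $\pi(\omega)$ for $\omega\in L^1(\K)$, that is, under $(\mu\otimes\id)(V)$ for those $\mu\in C_0^u(\G)^*$ extending some $\phi^*(\omega)$. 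When $\phi$ is not injective this is a proper subspace of the relevant functionals, so you cannot conclude that $K$ is $V$-invariant. Without that, the entries $(\id\otimes\omega_{\xi,\eta})(V)$ for $\xi,\eta\in K$ are only known to lie in $\mathbb{AP}(C_0^u(\G))$, not in $\mathrm{Pol}(\K)$, and injectivity of $\phi$ on $\mathrm{Pol}(\K)$ buys you nothing.

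The same circularity appears in your argument for $H_c=H_{cc}$: you invoke a Peter--Weyl decomposition of $V$ restricted to $H_c\cap H_{cc}^\perp$, but the existence of finite-dimensional sub-corepresentations of $V$ on compact vectors is exactly what this proposition and the subsequent theorem are needed to establish. (A smaller side issue: $\mathbb{AP}(C_0^u(\G))$ need not equal $C^u(\K)$; it is an intermediate completion, so one lifts $X$ to $C^u(\K)$ first and then pushes down.)

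The paper bypasses all of this with a direct computation. From the relation $W^*(1\otimes\phi(x))W = (\id\otimes\phi)\Delta_{\mathbb{AP}}(x)$, where $W$ is the left regular corepresentation of $\mathbb{AP}(C_0^u(\G))$, together with the corepresentation equation for $Y$, one obtains the explicit formula $Y_{13} = W_{12}^* X_{23} W_{12} X_{23}^*$. Feeding in $(\id\otimes\tau)(X) = \phi((\id\otimes\tau)(V))$ and using that $V$ is a unitary corepresentation (so that $\Delta_{\mathbb{AP}}((\id\otimes\omega)(V)) = (\id\otimes\id\otimes\omega)(V_{13}V_{23})$ and $V_{23}V_{23}^*=1$) yields $(\id\otimes\omega)(Y)\otimes 1 = (\id\otimes\omega)(V)\otimes 1$ via an orthonormal-basis expansion. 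This computation works uniformly for all $\xi,\eta\in H_c$, so $H_c=H_{cc}$ falls out afterwards as a corollary rather than requiring a separate argument.
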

\begin{proof}
As $\pi_{cc}$ is a non-degenerate $*$-homomorphism of $L^1_\sharp(\K)$,
by \cite[Corollary~4.3]{ku}, there is a unitary corepresentation
$X\in M(C(\K)\otimes \mc B_0(H_{cc}))$ with $(\omega\otimes\id)(X) =
\pi_{cc}(\omega)$ for $\omega\in L^1(\K)$.  Treating $\mc B(H_{cc})$ as a
``corner'' of $\mc B(H_c)$, we may ``zero-extend'' $X$ to a member of
$M(C(\K)\otimes \mc B_0(H_{c}))$.  A careful check shows that
$\pi(\omega) = (\omega\otimes\id)(X)$ for all $\omega\in L^1(\mathbb K)$.
Then, for $\tau\in\mc B(H_c)_*$, we have that
\[ \ip{(\id\otimes\tau)(X)}{\omega} = \ip{\pi(\omega)}{\tau}
= \ip{\phi^*(\omega)}{(\id\otimes\tau)(V)}
= \ip{\phi( (\id\otimes\tau)(V) )}{\omega}\qquad (\omega\in L^1(\K)). \]
So $(\id\otimes\tau)(X) = \phi( (\id\otimes\tau)(V) )$.  

Using \cite[Proposition~6.6]{ku} we may ``lift''
$X\in M(C(\K)\otimes\mc B_0(H_{cc}))$ to a unique unitary corepresentation
$X_u\in M(C^u(\K)\otimes\mc B_0(H_{cc}))$, and then push this down to
a (unique) unitary corepresentation $Y\in M(\mathbb{AP}(C_0^u(\G)) \otimes
\mc B_0(H_{cc}))$ with $(\phi\otimes\id)(Y) = X$.
Let $W\in M(\mathbb{AP}(C_0^u(\G))\otimes L^2(\K))$ be the left regular
representation of $\mathbb{AP}(C_0^u(\G))$, so that
\[ W^*(1\otimes\phi(x)) W = (\id\otimes\phi)\Delta_{\mathbb{AP}}(x)
\qquad (x\in \mathbb{AP}(C_0^u(\G))). \]
(This follows from equation (5.10) in \cite{woro2}, or from \cite{ku}, compare
the proof of \cite[Lemma~3.6]{daws}).  As $Y$ is a corepresentation,
it follows that
\[ W^*_{12} (\phi\otimes\id)(Y)_{23} W_{12} = (\id\otimes\phi\otimes\id)
(Y_{13} Y_{23}) \implies
Y_{13} = W^*_{12} X_{23} W_{12} X_{23}^*, \]
as $(\phi\otimes\id)(Y) = X$.  This of course being the proof that $Y$
is uniquely determined by $X$.  If we now zero-extend $Y$ to a member of
$M(\mathbb{AP}(C_0(\G)) \otimes \mc B_0(H_c))$, then again
$W^*_{12} X_{23} W_{12} = Y_{13} X_{23}$ and so
$W^*_{12} X_{23} W_{12}X_{23}^* = Y_{13} (1\otimes 1\otimes p) = Y_{13}$
as $Y_{13}$ is zero on $H_{cc}^\perp$ by construction, where here
$p:H_c\rightarrow H_{cc}$ is the orthogonal projection.

Now let $\xi,\eta\in H_c$, so
\begin{align*}
(\id\otimes\omega_{\xi,\eta})(Y) \otimes 1
= (\id\otimes\id\otimes\omega_{\xi,\eta})(W^*_{12} X_{23} W_{12} X_{23}^*)
= W^* \sum_i (\id\otimes\id\otimes\omega_{\xi,\eta})(X_{23}
  (W\otimes\theta_{e_i,e_i}) X_{23}^*),
\end{align*}
where $(e_i)$ is an orthonormal basis for $H_c$ and $\theta_{e_i,e_i}$ is
the rank-one projection onto the span of $e_i$.  Notice that
$\sum_i (W\otimes\theta_{e_i,e_i})$ converges strictly to $W\otimes 1$
in $M(\mathbb{AP}(C_0^u(\G))\otimes \mc B_0(L^2(\K))\otimes\mc B_0(H_{cc}))$,
and as slice maps are strictly continuous, the sum above converges strictly
in $M(\mathbb{AP}(C_0^u(\G)) \otimes C_0(\K))$.  Thus
\[ (\id\otimes\omega_{\xi,\eta})(Y) \otimes 1
= W^* \sum_i ( 1\otimes(\id\otimes\omega_{e_i,\eta})(X)) W
( 1\otimes(\id\otimes\omega_{\xi,e_i})(X^*)). \]
However, this sum is then equal to
\begin{align*} W^* \sum_i ( 1\otimes & \phi(\id\otimes\omega_{e_i,\eta})(V)) W
  ( 1\otimes\phi(\id\otimes\omega_{\xi,e_i})(V^*)) \\
&= \sum_i (\id\otimes\phi)\Delta_{\mathbb{AP}}((\id\otimes\omega_{e_i,\eta})(V))
  ( 1\otimes\phi(\id\otimes\omega_{\xi,e_i})(V^*)) \\
&= (\id\otimes\phi)\Big( \sum_i
  \Delta_{\mathbb{AP}}((\id\otimes\omega_{e_i,\eta})(V))
  ( 1\otimes\phi(\id\otimes\omega_{\xi,e_i})(V^*)) \Big) \\
&= (\id\otimes\phi)\Big( \sum_i
  (\id\otimes\id\otimes\omega_{e_i,\eta})(V_{13}V_{23}	)
  ( 1\otimes\phi(\id\otimes\omega_{\xi,e_i})(V^*)) \Big) \\
&= (\id\otimes\phi)\Big( \sum_i
  (\id\otimes\id\otimes\omega_{\xi,\eta})(V_{13}V_{23}
   (1\otimes 1\otimes \theta_{e_i,e_i})V^*_{23}) \Big) \\
&= (\id\otimes\phi)\big( (\id\otimes\id\otimes\omega_{\xi,\eta})(V_{13}) \big)
= (\id\otimes\omega_{\xi,\eta})(V) \otimes 1.
\end{align*}
In the final part, of course $V$ is unitary on all of $H_c$, not just $H_{cc}$.
Thus we have shown that $(\id\otimes\omega)(Y) = (\id\otimes\omega)(V)$ for
all $\omega\in\mc B(H_c)_*$.
\end{proof}

Consider the (non-degenerate) inclusions
\[ \xymatrix{ \mathbb{AP}(C_0^u(\G))\otimes\mc B_0(H_c)
\ar[r] & M(C_0^u(\G)) \otimes \mc B_0(H_c) \ar[r] &
M(C_0^u(\G) \otimes \mc B_0(H_c)). } \]
Call the composition $\pi$, so we get the strict extension $\tilde\pi:
M( \mathbb{AP}(C_0(\G))\otimes\mc B_0(H_c) ) \rightarrow
M(C_0(\G) \otimes \mc B_0(H_c))$.  This is injective, as if $\tilde\pi(x)=0$
then $\pi(xa)=0$ for all $a\in \mathbb{AP}(C_0^u(\G))\otimes\mc B_0(H_c)$
and as $\pi$ is an inclusion, hence $xa=0$ for all $a$, so $x=0$.

\begin{proposition}
With the above notation, we have that $\tilde\pi(Y) = V$.
\end{proposition}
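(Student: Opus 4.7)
The plan is to reduce the equality $\tilde\pi(Y) = V$ to the slice-level identity that was just established, and then to invoke the fact that a multiplier of $C_0^u(\G) \otimes \mc B_0(H_c)$ is determined by its slices in the $H_c$-variable. First I would observe that by Lemma~\ref{lem:rest_com_vecs}, $V$ genuinely lies in $M(C_0^u(\G) \otimes \mc B_0(H_c))$, so both $\tilde\pi(Y)$ and $V$ are elements of the same algebra and the comparison makes sense.

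Next I would establish a naturality statement for slice maps with respect to $\tilde\pi$: for any $T \in M(\mathbb{AP}(C_0^u(\G)) \otimes \mc B_0(H_c))$ and any $\omega \in \mc B(H_c)_*$,
\[ (\id \otimes \omega)(\tilde\pi(T)) = \tilde\jmath\big( (\id \otimes \omega)(T) \big), \]
where $\tilde\jmath : M(\mathbb{AP}(C_0^u(\G))) \to M(C_0^u(\G))$ is the strict extension of the inclusion. This is standard: one checks the identity first on elementary tensors $a \otimes b$ with $a \in \mathbb{AP}(C_0^u(\G))$ and $b \in \mc B_0(H_c)$, where it is trivial, and then extends by strict continuity using a bounded approximate identity in $\mathbb{AP}(C_0^u(\G)) \otimes \mc B_0(H_c)$. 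Applying this with $T = Y$ and using the previous proposition, which states $(\id \otimes \omega)(Y) = (\id \otimes \omega)(V)$ as elements of $M(C_0^u(\G))$, I conclude that $(\id \otimes \omega)(\tilde\pi(Y)) = (\id \otimes \omega)(V)$ for every $\omega \in \mc B(H_c)_*$.

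Finally I would close the argument by uniqueness: a multiplier $T \in M(C_0^u(\G) \otimes \mc B_0(H_c))$ is completely determined by the collection $\{(\id \otimes \omega)(T) : \omega \in \mc B(H_c)_*\}$. Concretely, pick any faithful non-degenerate representation of $C_0^u(\G)$ on a Hilbert space $K$; then $M(C_0^u(\G) \otimes \mc B_0(H_c))$ embeds into $\mc B(K \otimes H_c)$, and the matrix coefficients of $T$ against vectors of the form $\xi \otimes e_i, \eta \otimes e_j$ are computed exactly by applying vector functionals $\omega_{e_i, e_j}$ in the $H_c$-slot. Since $\tilde\pi(Y)$ and $V$ have equal slices, they coincide as operators on $K \otimes H_c$ and therefore as multipliers. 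The only delicate point in the whole argument is the naturality identity for the slice map under $\tilde\pi$; everything else is formal.
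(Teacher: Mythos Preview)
Your proposal is correct and follows essentially the same approach as the paper: both arguments reduce the equality $\tilde\pi(Y)=V$ to the slice-level identity $(\id\otimes\omega)(Y)=(\id\otimes\omega)(V)$ established in the preceding proposition, and then use that a multiplier is determined by its slices in the $\mc B_0(H_c)$ leg. The only cosmetic difference is that the paper does this by a direct multiplier computation---checking $\tilde\pi(Y)\pi(x\otimes\theta_1)(a\otimes\theta_2) = V(xa\otimes\theta_1\theta_2)$ after slicing by $\omega\in\mc B(H_c)_*$---whereas you package the same step as an abstract naturality statement $(\id\otimes\omega)\circ\tilde\pi = \tilde\jmath\circ(\id\otimes\omega)$ and then invoke determination-by-slices separately.
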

\begin{proof}
Let $x\in\mathbb{AP}(C_0^u(\G)), a\in C_0^u(\G)$ and $\theta_1,\theta_2\in
\mc B_0(H_c)$.  We need to show that
\[ \tilde\pi(Y) \pi(x\otimes\theta_1) (a\otimes\theta_2)
= \pi\big( Y(x\otimes\theta_1) \big) (a\otimes\theta_2)
\quad\text{and}\quad
V\pi(x\otimes\theta_1) (a\otimes\theta_2)
= V(xa\otimes\theta_1\theta_2), \]
agree; similarly there is an analogous argument ``on the left''.
Now, both these are members of $C_0^u(\G)\otimes\mc B_0(H_c)$, so we can
prove equality by slicing by $\omega\in\mc B(H_c)_*$.  However,
\begin{align*}
(\id\otimes\omega) \big( \pi\big( Y(x\otimes\theta_1) \big)
  (a\otimes\theta_2) \big)
&= (\id\otimes\theta_2\omega)\big( \pi\big( Y(x\otimes\theta_1) \big)\big) a
= (\id\otimes\theta_2\omega)\big( Y(x\otimes\theta_1) \big) a \\
&= \big((\id\otimes\theta_1\theta_2\omega)(Y) x\big) a
= \big((\id\otimes\theta_1\theta_2\omega)(V) x\big) a \\
&= (\id\otimes\omega)\big( V(xa\otimes\theta_1\theta_2) \big),
\end{align*}
as required.
\end{proof}

\begin{corollary}\label{corr:is_nondeg}
$\pi$ is non-degenerate: $H_{cc}=H_c$.
\end{corollary}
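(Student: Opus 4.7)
The plan is to exploit the identity $\tilde\pi(Y) = V$ from the previous proposition, combined with the asymmetry that $V$ is unitary on all of $H_c$ whereas $Y$ (zero-extended from $H_{cc}$ to $H_c$) is only a partial isometry with initial and final projection $1 \otimes p$, where $p \in \mc B(H_c)$ is the orthogonal projection onto $H_{cc}$. Passing the equation $Y^*Y = 1\otimes p$ through the $*$-homomorphism $\tilde\pi$ and comparing with $V^*V = 1 \otimes 1_{H_c}$ will force $p = 1_{H_c}$.

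More concretely, the first step is to record that, by construction, $Y$ was manufactured as a genuine unitary corepresentation on $H_{cc}$ and then zero-extended to $H_c$, so in $M(\mathbb{AP}(C_0^u(\G))\otimes\mc B_0(H_c))$ we have $Y^*Y = YY^* = 1 \otimes p$. On the other side, by Lemma~\ref{lem:rest_com_vecs} the subspace $H_c$ is $U$-invariant, so the restriction $V$ of the unitary $U$ is itself unitary, giving $V^*V = 1\otimes 1_{H_c}$ in $M(C_0^u(\G)\otimes\mc B_0(H_c))$.

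The second step is to observe that $\pi$ is a non-degenerate $*$-homomorphism (as discussed immediately before the proposition), so its strict extension $\tilde\pi$ is a unital $*$-homomorphism between the multiplier algebras. Moreover $\tilde\pi$ acts trivially on the second tensor factor, so $\tilde\pi(1\otimes p)= 1\otimes p$. Combining with the proposition just proved,
\[ 1 \otimes p \;=\; \tilde\pi(1\otimes p) \;=\; \tilde\pi(Y^*Y) \;=\; \tilde\pi(Y)^*\tilde\pi(Y) \;=\; V^*V \;=\; 1\otimes 1_{H_c}, \]
so $p = 1_{H_c}$, i.e.\ $H_{cc} = H_c$.

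The only point requiring care is the routine verification that $\tilde\pi$ really is a $*$-homomorphism of multiplier algebras with $\tilde\pi(1\otimes p) = 1\otimes p$; I would dispatch this by unwinding the definition on an arbitrary product $\pi(x\otimes\theta_1)(a\otimes\theta_2)$, exactly as in the proof of the previous proposition. There is no deeper obstacle: once the identification $\tilde\pi(Y) = V$ is in hand, the corollary is essentially a one-line consequence of comparing initial projections of partial isometries under a faithful strict extension.
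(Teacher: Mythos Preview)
Your proposal is correct and is exactly the intended deduction: the paper states the corollary without proof precisely because it follows immediately from $\tilde\pi(Y)=V$ by comparing $\tilde\pi(Y^*Y)=1\otimes p$ with $V^*V=1\otimes 1_{H_c}$, just as you do. The verification that $\tilde\pi$ is a unital $*$-homomorphism with $\tilde\pi(1\otimes p)=1\otimes p$ is indeed routine (strict extensions of non-degenerate $*$-homomorphisms are always unital $*$-homomorphisms, and here $\pi$ is of the form $\iota\otimes\id$), so there is nothing more to say.
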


Let $\iota:\mathbb{AP}(C_0^u(\G)) \rightarrow M(C_0^u(\G))$ be the inclusion,
which is a quantum group morphism.  For any corepresentation $Z\in
M(\mathbb{AP}(C_0^u(\G)) \otimes \mc B_0(K))$, the canonical way to induce a
corepresentation of $C_0^u(\G)$ is by looking at $(\iota\otimes\id)(Z)$.
If we unpack this, then $\iota\otimes\id:M(\mathbb{AP}(C_0^u(\G)) \otimes
\mc B_0(K)) \rightarrow M(C_0^u(\G)\otimes\mc B_0(K))$ is precisely the map
$\pi$ above.

So, in conclusion, $V$ arises from a unitary corepresentation, $Y$, of the
compact quantum group $\mathbb{AP}(C_0^u(\G))$, in the canonical way.
As $Y$ decomposes as a direct sum of finite-dimensional corepresentations,
the same is hence true of $V$.  As $\mathbb{AP}(C_0^u(\G))$ is compact, these
are also automatically \emph{admissible} in So{\l}tan's sense.
So we have proved the following.

\begin{theorem}\label{thm:subreps_cmptvecs}
Let $U$ be a unitary corepresentation of $\G$.  Then $U$ has a 
finite-dimensional, admissible sub-corepresentations if and only if $U$ has
non-zero compact vectors.
\end{theorem}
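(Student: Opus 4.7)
The plan is to prove the theorem as essentially a corollary of the preceding technical development culminating in the final unnumbered paragraph just before the statement. The key tool is that the canonical reduction of $U$ to its subspace $H_c$ of compact vectors already has been shown to be the ``lift'' of a genuine unitary corepresentation $Y$ of the compact quantum group $\mathbb{AP}(C_0^u(\G))$, on which Peter--Weyl theory is available. So the two directions are really the two halves of this correspondence.

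For the easy direction, assume $U$ has a finite-dimensional admissible sub-corepresentation $W$ on a closed subspace $K\subseteq H$. Since $U$ is unitary and $K$ is an invariant subspace, $K$ actually reduces $U$, so for any $\xi\in K$ and any $\eta\in H$, writing $\eta=q\eta+(1-q)\eta$ with $q$ the projection onto $K$, we get
\[ (\id\otimes\omega_{\xi,\eta})(U) = (\id\otimes\omega_{\xi,q\eta})(W), \]
which is a matrix coefficient of an admissible finite-dimensional corepresentation and hence, by the very definition of $\mathbb{AP}(C_0^u(\G))$ recalled above via So{\l}tan's construction, lies in $\mathbb{AP}(C_0^u(\G))$. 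Thus every non-zero $\xi\in K$ is compact, so $H_c\neq\{0\}$.

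For the harder direction, assume $H_c\neq\{0\}$. By Lemma~\ref{lem:rest_com_vecs}, $U$ restricts to a unitary corepresentation $V$ on $H_c$, and by the chain of results culminating in Corollary~\ref{corr:is_nondeg} together with the proposition preceding it, there is a unitary corepresentation $Y\in M(\mathbb{AP}(C_0^u(\G))\otimes\mc B_0(H_c))$ of the compact quantum group $\mathbb{AP}(C_0^u(\G))$ such that $V$ is obtained from $Y$ by applying the canonical morphism $\iota:\mathbb{AP}(C_0^u(\G))\to M(C_0^u(\G))$. Because $\mathbb{AP}(C_0^u(\G))$ is a compact quantum group, the standard Peter--Weyl decomposition gives $Y=\bigoplus_\alpha Y_\alpha$ with each $Y_\alpha$ finite-dimensional (and, since the ambient object is a compact quantum group, automatically admissible in So{\l}tan's sense). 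Applying $\iota\otimes\id$ to any single summand $Y_\alpha$ produces a finite-dimensional admissible sub-corepresentation of $V$, hence of $U$, on the corresponding finite-dimensional subspace of $H_c\subseteq H$.

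The main work has already been absorbed into the preceding propositions; the only obstacle in writing the proof is being careful about the universal-versus-reduced distinction (admissibility must be witnessed in $M(C_0^u(\G))$, not only after passing to the reduced compact quantum group $C(\K)$), but this is exactly what was arranged by lifting $X$ on $C(\K)$ to $Y$ on $\mathbb{AP}(C_0^u(\G))$ via \cite[Proposition~6.6]{ku} in the construction of $Y$, and then verifying $\tilde\pi(Y)=V$. With this identification in hand, both implications reduce to one-line observations.
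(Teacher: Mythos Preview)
Your proposal is correct and matches the paper's approach: the paper presents this theorem as the summary of the preceding development in Section~\ref{sec:cmptvecs}, with the ``harder'' direction being exactly the Peter--Weyl decomposition of $Y$ on $\mathbb{AP}(C_0^u(\G))$ pushed forward via $\iota\otimes\id$, and the ``easy'' direction left implicit. You have filled in the easy direction cleanly (using that $K$ reduces the unitary $U$, so coefficients against arbitrary $\eta$ collapse to coefficients of $W$), which is the only thing the paper does not spell out.
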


\section{Decompositions}\label{sec:decomp}

For a locally compact quantum group, let $S$ be the antipode, and $R$ the
unitary antipode, here supposed to act on $C_0^u(\G)$, see
\cite[Sections~7 and~9]{ku}.
In this section we restrict to Kac algebras, or slightly more generally,
to those locally compact quantum groups where the
antipode and the unitary antipode agree.  In this case, the universal
corepresentation $\mc W \in M(C_0^u(\G) \otimes C_0^u(\hh\G))$ generates a
C$^*$-Eberelin algebra.  Indeed, the coproduct structure on the dual quantum
group $C_0^u(\hh\G)$ can be used to show that
$B_0 = \{ (\id\otimes\hh\mu)(\mc W): \hh\mu\in C_0^u(\hh\G)^* \}$ is an
algebra.  As $R=S$, from the usual formula (see \cite[Remark~9.1]{ku}) we get
\[ R\big( (\id\otimes\hh\mu)(\mc W) \big) = (\id\otimes\hh\mu)(\mc W^*). \]
For any corepresentation $U$ let $U^c = (R\otimes\top)(U)$ be the
\emph{contragradient} corepresentation (see \cite[Proposition~10]{sw}),
recalling the tranpose map $\top$ from Section~\ref{sec:note}.
Then both $R$ and $\top$
are anti-$*$-homomorphisms, and so $(R\otimes\top)(U)$ is well-defined.
Furthermore, we see that
\[ (\id\otimes\omega_{\overline\xi,\overline\eta})(U^c) =
R\big( (\id\otimes\omega_{\eta,\xi})(U) \big)
= (\id\otimes\omega_{\eta,\xi})(U^*)
= (\id\otimes\omega_{\xi,\eta})(U)^*. \]
So slices of $U^c$ give the adjoints of slices of $U$.  As $\mc W$ is the
universal corepresentation, slices of $\mc W^c$ form a subset of slices
of $\mc W$.  It follows that $B_0^* \subseteq B_0$ and hence actually $B_0$
is $*$-closed, and hence its closure $B$ is a C$^*$-algebra.  Again by
universality, clearly $E(\G) = B$.

Now let $U$ be a corepresentation of $C_0^u(\G)$ on $H$.  That $\mc W$
is universal means that $(\id\otimes\omega)(U) \in E(\G)$ for any
$\omega\in \mc B(H)_*$.  With $M$ the (unique) invariant mean on $E(\G)$
it thus makes sense to define $p=(M\otimes\id)(U)$, as by definition
$(p\xi|\eta) = \ip{M}{(\id\otimes\omega_{\xi,\eta})(U)}$ for $\xi,\eta\in H$.
We can now copy the start of the proof of Theorem~\ref{thm:invmeanuni}, carefully
noting that we don't need $U$ to generate a C$^*$-Eberlein algebra, only
that slices of $U$ are in $E(\G)$.  Thus we may conclude that $p$ is the
orthogonal projection onto $\inv(U)$ and so
$\ip{M}{(\id\otimes\omega_{\xi,\eta})(U)} 1
= (\id\otimes\omega_{p\xi,\eta})(U)$.

\begin{lemma}\label{lem:mean_inv_vecs}
Let $U$ be a corepresentation of $C_0^u(\G)$ on $H$, and consider the
corepresentation $U^c \tp U$ on $\overline{H}\otimes H$.  Let $p$ be the
orthogonal projection onto $\inv(U^c \tp U)$.
For $x=(\id\otimes\omega_{\xi,\xi})(U)$ we have that $\ip{M}{x^*x}=0$
if and only if $p(\overline\xi \otimes \xi)=0$.
\end{lemma}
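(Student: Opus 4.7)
My plan is to rewrite $x^*x$ as a single slice of the corepresentation $U^c\tp U$ acting on $\overline H \otimes H$, and then invoke the identification (stated in the paragraph immediately preceding the lemma) of $(M\otimes\id)$ applied to an arbitrary corepresentation with the orthogonal projection onto its invariant vectors.

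The first step is to use the identity $(\id\otimes\omega_{\overline\xi,\overline\eta})(U^c) = (\id\otimes\omega_{\xi,\eta})(U)^*$ derived at the start of the section from the hypothesis $R=S$; with $\eta=\xi$ this reads $x^* = (\id\otimes\omega_{\overline\xi,\overline\xi})(U^c)$. Combining this with the routine slice-product formula $(\id\otimes\omega_1)(A)(\id\otimes\omega_2)(B) = (\id\otimes\omega_1\otimes\omega_2)(A_{12}B_{13})$ and the definition $U^c\tp U = U^c_{12}U_{13}$, I obtain
\[ x^*x \;=\; (\id\otimes\omega_{\overline\xi,\overline\xi}\otimes\omega_{\xi,\xi})(U^c\tp U). \]

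Second, I would identify $\omega_{\overline\xi,\overline\xi}\otimes\omega_{\xi,\xi}$ with the vector functional $\omega_{\overline\xi\otimes\xi,\,\overline\xi\otimes\xi}$ on $\mc B(\overline H\otimes H)$, so that
\[ \ip{M}{x^*x} \;=\; \big( (M\otimes\id)(U^c\tp U)(\overline\xi\otimes\xi) \,\big|\, \overline\xi\otimes\xi\big). \]
By the paragraph preceding the lemma, $(M\otimes\id)(U^c\tp U)$ is precisely $p$, the orthogonal projection onto $\inv(U^c\tp U)$, and hence the right-hand side equals $\|p(\overline\xi\otimes\xi)\|^2$. Since $p$ is a self-adjoint projection, this quantity vanishes if and only if $p(\overline\xi\otimes\xi)=0$, which is the claim.

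There is no serious obstacle here; the proof is essentially bookkeeping. The only subtlety is ensuring that applying $M$ to a slice of $U^c\tp U$ is legitimate, but this is exactly what the preceding paragraph guarantees: once all slices of a corepresentation land in $E(\G)$ (which holds here by universality of $\mc W$ and because $E(\G) = B$ is generated by $\mc W$), the conclusion of Theorem~\ref{thm:invmeanuni} applies verbatim, yielding $(M\otimes\id)(U^c\tp U) = p$.
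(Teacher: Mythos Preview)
Your proof is correct and follows essentially the same route as the paper: both rewrite $x^*x$ as the single slice $(\id\otimes\omega_{\overline\xi\otimes\xi,\overline\xi\otimes\xi})(U^c\tp U)$ and then apply the identification of $(M\otimes\id)(U^c\tp U)$ with the projection $p$ established in the preceding paragraph. The paper's write-up is marginally more roundabout, passing through the intermediate identity $\ip{M}{x^*x}\,1=(\id\otimes\omega_{p(\overline\xi\otimes\xi),\overline\xi\otimes\xi})(U^c\tp U)$ and then pairing with a state $\mu$ to extract the scalar, whereas you go directly from $(M\otimes\id)(U^c\tp U)=p$ to $\ip{M}{x^*x}=\|p(\overline\xi\otimes\xi)\|^2$; but this is a cosmetic difference, not a substantive one.
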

\begin{proof}
We see that
\[ x^*x = (\id\otimes\omega_{\overline\xi,\overline\xi})(U^c)
(\id\otimes\omega_{\xi,\xi})(U)
= (\id\otimes\omega_{\overline\xi\otimes\xi,\overline\xi\otimes\xi})
  (U^c\tp U). \]
and so $\ip{M}{x^*x} 1 = (\id\otimes\omega_{p(\overline\xi\otimes\xi),
\overline\xi\otimes\xi})(U^c\tp U)$.  For any state $\mu\in C_0^u(\G)^*$,
as $p$ is the projection onto $\inv(U^c\tp U)$, it follows that
$(\mu\otimes\id)(U^c\tp U)p=p$ and so
\[ M(x^*x) = \ip{\mu}{(\id\otimes\omega_{p(\overline\xi\otimes\xi),
\overline\xi\otimes\xi})(U^c\tp U)}
= (p(\overline\xi\otimes\xi)|\overline\xi\otimes\xi)
= \| p(\overline\xi\otimes\xi) \|^2, \]
from which the result follows.
\end{proof}

We remark that the Cauchy-Schwarz inequality, and polar decomposition, show
easily that $\ip{M}{x^*x}=0$ if and only if $\ip{M}{|x|}=0$.

Motivated by the above, we wish to study when $\inv(U^c\tp U)$ is non-trivial.

\begin{proposition}\label{prop:fdsubcorep}
For any corepresentation $U$, we have that $\inv(U^c\tp U)\not=\{0\}$
if and only if $U$ has a finite-dimensional sub-corepresentation.
\end{proposition}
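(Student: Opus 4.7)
The plan is to realise the Hilbert space $\overline H\otimes H$ as the space $\mc B_2(H)$ of Hilbert--Schmidt operators on $H$, via the isometric isomorphism $T\mapsto\zeta_T$ with $\zeta_T=\sum_i\overline{e_i}\otimes Te_i$ for an orthonormal basis $(e_i)$ (equivalently, $(\zeta_T\mid\overline\xi\otimes\eta)=(T\xi\mid\eta)$). The key lemma driving the proof is that under this identification, a vector $\zeta_T$ is invariant for $U^c\tp U$ if and only if $T$ intertwines $U$ with itself, i.e.\ $U(1\otimes T)=(1\otimes T)U$ in $M(C_0^u(\G)\otimes\mc B_0(H))$. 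Once this is established, the proposition reduces to a standard operator-theoretic argument.

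For the forward implication, suppose $U$ has a finite-dimensional sub-corepresentation on a subspace $K\subseteq H$. Since $U$ is unitary, $K^\perp$ is also $U$-invariant, so the orthogonal projection $p_K$ intertwines $U$ with itself; moreover $p_K$ has finite rank and is therefore Hilbert--Schmidt. By the key lemma, $\zeta_{p_K}$ is a non-zero element of $\inv(U^c\tp U)$.

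For the converse, let $\zeta\in\inv(U^c\tp U)$ be non-zero, and let $T=T_\zeta\in\mc B_2(H)$ be the corresponding non-zero Hilbert--Schmidt operator, which by the key lemma intertwines $U$. Taking adjoints and using unitarity of $U$ shows that $T^*$ also intertwines $U$, so $T^*T$ is a positive, trace-class, non-zero operator commuting with every $U_\mu:=(\mu\otimes\id)(U)$ for $\mu\in C_0^u(\G)^*$. Pick a positive eigenvalue $\lambda$ of $T^*T$; the eigenspace $K_\lambda$ is finite-dimensional (as $T^*T$ is compact), non-zero, and invariant under each $U_\mu$. Slicing gives $U(1\otimes p_{K_\lambda})=(1\otimes p_{K_\lambda})U(1\otimes p_{K_\lambda})$, so $U|_{K_\lambda}$ is a finite-dimensional sub-corepresentation.

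The main obstacle is the key lemma. A direct expansion using $U^c=(\id\otimes\top)(U^*)$, available since the Kac-algebra identity $(R\otimes\id)(U)=U^*$ holds, expresses $(\mu\otimes\id)(U^c\tp U)\zeta_T$ as $\zeta_{T'}$ for an explicit $T'$ depending bilinearly on $\mu$ and $T$; matching this with $\mu(1)T$ reduces to a combinatorial identity whose finite-dimensional shadow is $\sum_n V_{in}^*V_{kn}=\delta_{ik}\cdot 1$ for the matrix coefficients of a finite-dimensional unitary corepresentation $V$. This identity is genuinely special to the Kac setting: it follows by applying the anti-multiplicative, $*$-preserving map $R$ to both sides, using $R(V_{ij})=V_{ji}^*$ together with $V^*V=1$, and invoking injectivity of $R$. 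A similar $R$-trick upgrades the argument from matrix coefficients to the general operator-valued statement, completing the proof of the key lemma.
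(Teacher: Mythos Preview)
Your argument is correct and is precisely the approach of Kyed--So{\l}tan \cite[Theorem~2.6]{ks}, which the paper cites in lieu of a proof: identify $\overline H\otimes H$ with $\mc B_2(H)$ so that $U^c\tp U$ becomes the adjoint action and invariant vectors become self-intertwiners of $U$, then use the spectral decomposition of the compact positive intertwiner $T^*T$ to extract a finite-dimensional invariant subspace. One small remark: your final paragraph makes the key lemma sound harder than it is---in the Kac setting one has directly $(\id\otimes\omega_{\overline\xi,\overline\eta})(U^c)=(\id\otimes\omega_{\xi,\eta})(U)^*$, and a short computation with an orthonormal basis then shows that $(\mu\otimes\id)(U^c\tp U)\zeta_T=\zeta_{T'}$ with $T'$ determined by $(T'\xi\mid\eta)=\ip{\mu}{(\id\otimes\omega_{\xi,\eta})(U(1\otimes T)U^*)}$, so invariance of $\zeta_T$ is exactly $U(1\otimes T)U^*=1\otimes T$; no separate ``$R$-trick'' beyond the definition of $U^c$ is needed.
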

\begin{proof}
This is stated for discrete Kac algebras in \cite[Theorem~2.6]{ks}, but
the proof readily generalises.
\end{proof}

The following is a generalisation of Godement's decomposition theorem,
\cite[Theorem~16, page~64]{G}.

\begin{theorem}\label{thm:decomp}
Let $U$ be a corepresentation of $C_0^u(\G)$.  As in Section~\ref{sec:cmptvecs},
$U = U_c \oplus U_0$ on $H=H_c\oplus H_c^\perp$ where $H_c$ is the subspace of
compact vectors.  Then $U_c$ is canonically induced by a corepresentation
of $\mathbb{AP}(C_0^u(\G))$, and for any $x=(\id\otimes\omega)(U_0)$, we have
that $\ip{M}{x^*x}=0$.
\end{theorem}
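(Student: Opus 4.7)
The first assertion — that $U_c$ is canonically induced by a unitary corepresentation of $\mathbb{AP}(C_0^u(\G))$ — is what was already established by the constructions in Section~\ref{sec:cmptvecs}: the corepresentation $Y\in M(\mathbb{AP}(C_0^u(\G))\otimes\mc B_0(H_c))$ built there satisfies $\tilde\pi(Y)=V=U_c$, and the corollary $H_{cc}=H_c$ shows there is no redundancy. So the work is in the second assertion.

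The plan is to chain Lemma~\ref{lem:mean_inv_vecs}, Proposition~\ref{prop:fdsubcorep}, and Theorem~\ref{thm:subreps_cmptvecs}. By Lemma~\ref{lem:rest_com_vecs}, $U_0$ has no non-zero compact vectors, so Theorem~\ref{thm:subreps_cmptvecs} gives that $U_0$ admits no finite-dimensional \emph{admissible} sub-corepresentation. In the Kac setting I would then upgrade this to ``no finite-dimensional sub-corepresentation at all''. For this one uses that $U_0$ is unitary: if $K\subseteq H_c^\perp$ is a finite-dimensional invariant subspace for $U_0$, then since $(\mu\otimes\id)(U_0^*)=((\mu\circ S)\otimes\id)(U_0)$ and $S$ is bounded in the Kac case, $K$ is also invariant for $U_0^*$, so $U_0|_K$ is unitary. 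A short matrix calculation, using $(\id\otimes S)(U)=U^*$ together with the fact that the matrix transpose is an anti-automorphism (hence preserves invertibility) of $M_n(M(C_0^u(\G)))$, shows that every finite-dimensional unitary corepresentation is admissible in this setting. This contradicts the previous sentence unless there is no such $K$.

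Having eliminated finite-dimensional sub-corepresentations, Proposition~\ref{prop:fdsubcorep} yields $\inv(U_0^c\tp U_0)=\{0\}$, and Lemma~\ref{lem:mean_inv_vecs} then immediately gives $\ip{M}{x^*x}=0$ for every $x=(\id\otimes\omega_{\xi,\xi})(U_0)$ with $\xi\in H_c^\perp$. To pass to arbitrary $\omega\in\mc B(H_c^\perp)_*$, I would observe that the set $N=\{x\in\mc E(\G):\ip{M}{x^*x}=0\}$ is norm-closed (continuity of $x\mapsto\ip{M}{x^*x}$) and a linear subspace: if $x,y\in N$ then the GNS-type Cauchy--Schwarz inequality $|\ip{M}{y^*x}|\leq\ip{M}{x^*x}^{1/2}\ip{M}{y^*y}^{1/2}=0$ controls the cross-terms in $\ip{M}{(x+y)^*(x+y)}$. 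Polarisation shows $(\id\otimes\omega_{\xi,\eta})(U_0)\in N$ for all $\xi,\eta\in H_c^\perp$, and trace-norm density of finite-rank operators in $\mc B(H_c^\perp)_*$, combined with norm-continuity of $\omega\mapsto(\id\otimes\omega)(U_0)$, extends this to all $\omega$.

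The main obstacle I anticipate is the Kac-specific bridge from ``no admissible finite-dimensional sub-corepresentation'' to ``no finite-dimensional sub-corepresentation whatsoever'': the other steps are direct citations, but this step crucially rests on the boundedness of the antipode, on the unitarity of restrictions, and on tracking that the matrix transpose really does preserve invertibility in $M_n(M(C_0^u(\G)))$. Everything else is bookkeeping around the three main results already in place.
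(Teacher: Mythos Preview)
Your proposal is correct and follows the paper's approach: cite Section~\ref{sec:cmptvecs} for the first claim, then chain Lemma~\ref{lem:rest_com_vecs}, Theorem~\ref{thm:subreps_cmptvecs}, Proposition~\ref{prop:fdsubcorep}, and the computation behind Lemma~\ref{lem:mean_inv_vecs}. The only differences are that the paper handles general $\omega$ by amplifying $U_0$ to $H_c^\perp\otimes\ell^2$ (checking this introduces no compact vectors, so that every functional is already a vector functional $\omega_{\xi,\eta}$ and the Lemma~\ref{lem:mean_inv_vecs} computation can be redone directly for that case) rather than via your Cauchy--Schwarz and polarisation route, and that the paper glosses over the admissible-versus-arbitrary bridge you carefully spell out, writing only ``finite-dimensional (admissible)'' and moving on.
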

\begin{proof}
Given the results of Section~\ref{sec:cmptvecs} all we have to show is that
$\ip{M}{x^*x}=0$ for $x=(\id\otimes\omega)(U_0)$.
By Lemma~\ref{lem:rest_com_vecs}, we know that $U_0$ has no non-zero compact
vectors.  By replacing $H_c^\perp$ with $H_c^\perp\otimes\ell^2$ we may suppose
that $\omega=\omega_{\xi,\eta}$; notice that $H_c^\perp\otimes\ell^2$ also has
no non-zero compact vectors, as if $\sum_i \xi_i\otimes \delta_i$ is compact
then slicing against $\eta\otimes\delta_j$ for a fixed $j$ shows that
$\xi_j$ is compact, so $\xi_j=0$.

As in the proof of Lemma~\ref{lem:mean_inv_vecs}, we look at
\[ \ip{M}{x^*x} = \ip{M}{(\id\otimes\omega_{\overline\xi\otimes\xi,
\overline\eta\otimes\eta})(U_0^c\tp U_0)}
= \ip{\mu \otimes \omega_{p(\overline\xi\otimes\xi),
\overline\eta\otimes\eta}}{U_0^c\tp U_0} \]
which holds for any state $\mu$, with again $p$ the orthogonal projection
onto $\inv(U_0^c\tp U_0)$.  As $U_0$ has no non-zero compact vectors,
Theorem~\ref{thm:subreps_cmptvecs} shows that $U_0$ has no finite-dimensional
(admissible) sub-corepresentations, and so Proposition~\ref{prop:fdsubcorep} shows
that $\inv(U_0^c\tp U_0)=\{0\}$.  Thus $p=0$ and so $\ip{M}{x^*x}=0$ as claimed.
\end{proof}

A similar result is the following.

\begin{proposition}\label{prop:no_cmpt_vecs}
Let $U$ be a corepresentation of $\G$.  Then $U$ has no compact vectors
if and only if $\ip{M}{|x|}=0$ for all $x=(\id\otimes\omega)(U)$.
\end{proposition}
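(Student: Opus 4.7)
The plan is to reduce the equivalence to a statement about $\ip{M}{x^*x}$ rather than $\ip{M}{|x|}$. The remark immediately after Lemma~\ref{lem:mean_inv_vecs}, which combines Cauchy--Schwarz with the operator inequality $|x|^2\leq \|x\|\cdot|x|$, ensures that $\ip{M}{x^*x}=0$ if and only if $\ip{M}{|x|}=0$; so throughout I may work with $x^*x$ in place of $|x|$, which is a much more convenient positive element to slice against.

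One direction is essentially already done. If $U$ has no compact vectors, then in the decomposition of Theorem~\ref{thm:decomp} we have $H_c=\{0\}$ and $U=U_0$, so that theorem directly gives $\ip{M}{x^*x}=0$ for every slice $x=(\id\otimes\omega)(U)$.

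For the converse I would argue by contraposition: given a non-zero compact vector I need to produce a single $\omega$ with $\ip{M}{x^*x}>0$. By Section~\ref{sec:cmptvecs}, $U_c$ is induced by a corepresentation of the compact quantum group $\mathbb{AP}(C_0^u(\G))$ and therefore decomposes as a direct sum of finite-dimensional unitary sub-corepresentations (automatically admissible in the Kac setting). Fix such a finite-dimensional invariant subspace $K\subseteq H_c$ with orthonormal basis $(e_i)$, choose any non-zero $\xi\in K$, and set $x=(\id\otimes\omega_{\xi,\xi})(U)$. Lemma~\ref{lem:mean_inv_vecs} then gives $\ip{M}{x^*x}=\|p(\overline\xi\otimes\xi)\|^2$, where $p$ is the projection onto $\inv(U^c\tp U)$. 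The diagonal vector $v_K:=\sum_i \overline{e_i}\otimes e_i\in\overline K\otimes K$ is invariant for $U_K^c\tp U_K$ and hence lies in $\inv(U^c\tp U)$, and a direct computation gives $\ip{\overline\xi\otimes\xi}{v_K}=\|\xi\|^2>0$. Thus $p(\overline\xi\otimes\xi)\neq 0$ and $\ip{M}{x^*x}>0$, as required.

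The main technical point is verifying the invariance of $v_K$, which depends crucially on the Kac hypothesis: the calculation reduces to the antipode identity $\sum_k U_{ki}\,R(U_{kj})=\delta_{ij}$, which in the Kac algebra setting coincides with the standard relation $\sum_k U_{ki}U_{jk}^*=\delta_{ij}$ via $R=S$ (the identity defining $U^c$). Without this hypothesis the diagonal vector would be twisted by the scaling group and would not in general be fixed; everything else in the argument is bookkeeping with slice maps and the results of Sections~\ref{sec:means} and~\ref{sec:cmptvecs}.
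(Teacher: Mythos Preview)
Your argument is correct, but it differs from the paper's and is more elaborate than necessary.  For the contrapositive direction, the paper simply takes a finite-dimensional unitary sub-corepresentation $V=(v_{ij})$ (which exists by Theorem~\ref{thm:subreps_cmptvecs}) and observes that unitarity gives $\sum_i v_{ij}^*v_{ij}=1$, whence $\sum_i \ip{M}{v_{ij}^*v_{ij}}=1$ and some summand is nonzero; the corresponding coefficient $x=v_{ij}=(\id\otimes\omega_{e_j,e_i})(U)$ then satisfies $\ip{M}{x^*x}\neq 0$.  This pigeonhole argument avoids any appeal to Lemma~\ref{lem:mean_inv_vecs} or to the diagonal invariant vector.

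Your route via $v_K=\sum_i\overline{e_i}\otimes e_i$ is also valid and makes the mechanism behind Proposition~\ref{prop:fdsubcorep} explicit: in the Kac case $\overline{U_K}$ is itself unitary (apply the $*$-anti-automorphism $R=S$ to the relation $U_K^*U_K=1$), which is exactly what forces $v_K\in\inv(U^c\tp U)$.  Be aware, however, that the identity you wrote down, $\sum_k U_{ki}U_{jk}^*=\delta_{ij}$, is \emph{not} one of the unitarity relations for $U_K$; the invariance of $v_K$ actually reduces to $\sum_k u_{ik}^*u_{jk}=\delta_{ij}$, which follows by applying $R$ to $\sum_k u_{ki}^*u_{kj}=\delta_{ij}$.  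The conclusion is unaffected, but the indices in your displayed identity should be corrected.
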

\begin{proof}
As $\ip{M}{x^*x}=0$ if and only if $\ip{M}{|x|}=0$, given the theorem, we
need only show that if $U$ has compact vectors, then for some choice of
$x=(\id\otimes\omega)(U)$ we have that $\ip{M}{x^*x}\not=0$.
By Theorem~\ref{thm:subreps_cmptvecs}, in this case $U$ admits a
finite-dimensional sub-corepresentation, say
\[ V = \sum_{i,j=1}^n v_{i,j} \otimes e_{i,j}, \]
where $U$ acts on $H$, where $(e_i)_{i=1}^n$ is an orthonormal subset of $H$,
and where $(e_{i,j})$ are the matrix units associated with $(e_i)$.  So
$e_{i,j}$ is the rank-one operator sending $e_j$ to $e_i$.

That $V$ is unitary means that $\sum_i v_{i,j}^* v_{i,j} = 1$ for all $j$, and
so $1 = \ip{M}{1} = \sum_i \ip{M}{v_{i,j}^* v_{i,j}}$.  Thus there is some
choice of $i,j$ with $\ip{M}{v_{i,j}^* v_{i,j}}\not=0$.
Then we compute that $(\id\otimes\omega_{e_j,e_i})(U) = v_{i,j}$, and so
$x = (\id\otimes\omega_{e_j,e_i})(U) = v_{i,j}$ is a suitable choice to
give $\ip{M}{x^*x}\not=0$ as required.
\end{proof}

\subsection{At the algebra level}\label{sec:alg_level}

For a locally compact group $G$, we have that $\mc E(G) = AP(C_0(G)) \oplus
\mc E_0(G)$, this usually being proved by compact semigroup considerations,
for example \cite[Chapter 4]{bjm}.
In this section, we explore this for Kac algebras, and in particular, give
a direct $C^*$-algebraic proof in the group case.

\begin{proposition}
Let $\G$ be a Kac algebra and let $M$ be the invariant mean on $\mc E(\G)$.
Then $M$ is a trace.
\end{proposition}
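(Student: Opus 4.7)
The plan is to use polarisation to reduce the trace property to showing $M(x^*x) = M(xx^*)$ for $x$ in a norm-dense subset of $\mc E(\G)$, and I would take $x = (\id\otimes\omega)(U_\G)$ to be a matrix coefficient of the generating corepresentation $U_\G$. I would then apply the compact-vector decomposition of Section~\ref{sec:cmptvecs} to $U_\G$: writing $H_\G = H_c \oplus H_c^\perp$, we have $U_\G = U_c \oplus U_0$, where $U_c$ is canonically induced from a unitary corepresentation of $\mathbb{AP}(C_0^u(\G))$ and $U_0$ has no compact vectors. Correspondingly, $x = x_c + x_0$, with $x_c \in \mathbb{AP}(C_0^u(\G))$ and $x_0$ a matrix coefficient of $U_0$.

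Theorem~\ref{thm:decomp} immediately gives $M(x_0^*x_0) = 0$, and the crux is to prove the companion identity $M(x_0x_0^*) = 0$. For this, since $\G$ is Kac, the formula $(\id\otimes\omega_{\overline\xi,\overline\eta})(U^c) = ((\id\otimes\omega_{\eta,\xi})(U))^*$ from the start of Section~\ref{sec:decomp} identifies $x_0^*$ as a matrix coefficient of the contragredient $U_0^c$, so that $x_0 x_0^*$ is a matrix coefficient of $U_0 \tp U_0^c$. Proposition~\ref{prop:fdsubcorep} applied to $V = U_0^c$ (using $(U_0^c)^c = U_0$) then yields that $\inv(U_0 \tp U_0^c) \neq \{0\}$ would force $U_0^c$, and hence $U_0$, to admit a finite-dimensional sub-corepresentation. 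But finite-dimensional sub-corepresentations of $U_\G$ are automatically admissible (as shown at the start of Section~\ref{sec:bohr}), while by Theorem~\ref{thm:subreps_cmptvecs} the corepresentation $U_0$ has no such sub-corepresentation, since it has no compact vectors. Hence $\inv(U_0 \tp U_0^c) = \{0\}$, and the formula $M((\id\otimes\tau)(W)) = \tau(P_{\inv(W)})$ of Theorem~\ref{thm:invmeanuni}, applied with $W = U_0 \tp U_0^c$, gives $M(x_0 x_0^*) = 0$.

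With both $M(x_0^*x_0)$ and $M(x_0 x_0^*)$ vanishing, Cauchy--Schwarz for the positive linear functional $M$ annihilates each cross-term (for instance $|M(x_c^*x_0)|^2 \leq M(x_c^*x_c)\,M(x_0^*x_0) = 0$), leaving $M(x^*x) = M(x_c^*x_c)$ and $M(xx^*) = M(x_c x_c^*)$. Now $\mathbb{AP}(C_0^u(\G))$ inherits the Kac property from $\G$, since its antipode is the restriction of the bounded unitary antipode $R$ and coincides there with the antipode by hypothesis; hence the Haar state on this compact quantum group is tracial. By Proposition~\ref{prop:haarstate}, that Haar state coincides with $M|_{\mathbb{AP}(C_0^u(\G))}$, which therefore satisfies $M(x_c^*x_c) = M(x_c x_c^*)$. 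Combining gives $M(x^*x) = M(xx^*)$ for every matrix coefficient, and norm-density extends this to all of $\mc E(\G)$.

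The main obstacle is the step establishing $M(x_0 x_0^*) = 0$: this really needs the Kac hypothesis, both so that the bounded unitary antipode $R$ makes $U_0^c$ a genuine corepresentation with the formula identifying $x_0^*$ as one of its matrix coefficients, and so that $\mathbb{AP}(C_0^u(\G))$ is itself Kac and its Haar state is tracial; without these, neither the argument via Proposition~\ref{prop:fdsubcorep} nor the final comparison on the almost-periodic part would be available.
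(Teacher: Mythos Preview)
Your proof is correct and follows essentially the same approach as the paper: decompose $x=x_c+x_0$ via the compact-vector splitting, kill the $x_0$ contributions so that everything reduces to $\mathbb{AP}(C_0^u(\G))$, and invoke that the Haar state there is tracial because the compact quantum group inherits the Kac property. The only stylistic difference is in how $M(x_0x_0^*)=0$ is obtained: the paper simply says ``as $x^*$ is a slice of $U^c$, apply the same argument to $U^c$'' (i.e.\ invoke Theorem~\ref{thm:decomp} a second time with $U$ replaced by $U^c$), whereas you unpack this step explicitly by writing $x_0x_0^*$ as a coefficient of $U_0\tp U_0^c$ and applying Proposition~\ref{prop:fdsubcorep} with $V=U_0^c$; these are the same idea at different levels of compression. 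One small remark: your citation of Theorem~\ref{thm:invmeanuni} for the formula $M((\id\otimes\tau)(W))=\tau(P_{\inv(W)})$ should really point to the extension of that formula to arbitrary corepresentations given at the start of Section~\ref{sec:decomp}, since $U_0\tp U_0^c$ is not the generating corepresentation.
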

\begin{proof}
Let $U$ be a corepresentation of $\G$ on $H$, so by Theorem~\ref{thm:decomp},
we have that $U = U_c \oplus U_0$ with respect to $H=H_c\oplus H_0$.  Let
$\xi=\xi_c+\xi_0, \eta=\eta_c+\eta_0$ with respect to this orthogonal
decomposition, and set
\[ x = (\id\otimes\omega_{\xi,\eta})(U) =
(\id\otimes\omega_{\xi_c,\eta_c})(U_c)
+ (\id\otimes\omega_{\xi_0,\eta_0})(U_0)
= x_c + x_0 \]
say, where $x_c \in \mathbb{AP}(C_0^u(\G))$ and $\ip{M}{x_0^*x_0}=0$.
By Cauchy-Schwarz, also $\ip{M}{x_0^*x_c} = \ip{M}{x_c^*x_0}=0$ and so
$\ip{M}{x^*x} = \ip{M}{x_c^*x_c}$.  As $x^*$ is a slice of $U^c$,
by applying the same argument to $U^c$, we conclude also that
$\ip{M}{xx^*} = \ip{M}{x_cx_c^*}$.

The inclusion $\mathbb{AP}(C_0^u(\G)) \rightarrow
M(C_0^u(\G))$ intertwines the coproducts.  Let $(\tau_t)$ be the scaling group
of $\mathbb{AP}(C_0^u(\G))$, so by \cite[Remark~12.1]{ku}, the inclusion
intertwines $\tau_t$ and the scaling group of $M(C_0^u(\G))$, which is of
course trivial as $\G$ is Kac.  It follows that $\tau_t=\id$ for all $t$,
and so $\mathbb{AP}(C_0^u(\G))$ is a compact Kac algebra.

By Proposition~\ref{prop:haarstate}, $M$ restricted to $\mathbb{AP}(C_0^u(\G))$
is the Haar state, and so is a trace on
$\mathbb{AP}(C_0^u(\G))$.  Hence $\ip{M}{x^*x} = \ip{M}{x_c^*x_c}
= \ip{M}{x_cx_c^*} = \ip{M}{xx^*}$.  As such $x$ are dense in $\mc E(\G)$ the
result follows by continuity.
\end{proof}

\begin{proposition}\label{prop:quot_onto_red}
Let $\G$ be a Kac algebra, let $M$ be the invariant mean on $\mc E(\G)$, and
let $\mc E_0(\G) = \{ x\in\mc E(\G) : \ip{M}{x^*x}=0 \}$.
Then $\mc E_0(\G)$ is a closed two-sided ideal, and with $(H,\pi,\xi_0)$
the GNS triple of $M$, we have that $\mc E(\G)/\mc E_0(\G) = \pi(\mc E(\G))$. Furthermore, if $\mathbb{AP}(C_0^u(\G))$
induces the compact quantum group $\G^{\sap}$ then $H$ is isomorphic to
$L^2(\G^{\sap})$ in such a way that $\pi(\mc E(\G)) = C(\G^\sap)$.
\end{proposition}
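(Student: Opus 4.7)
The plan is to combine three ingredients: the traciality of $M$ (just established), the decomposition theorem for corepresentations (Theorem~\ref{thm:decomp}), and Proposition~\ref{prop:haarstate} identifying $M$ on $\mathbb{AP}(C_0^u(\G))$ with the Haar state.

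First I would show $\mc E_0(\G)$ is a closed two-sided ideal. Closedness is immediate from the continuity of $x\mapsto \ip{M}{x^*x}$. That $\mc E_0(\G)$ is a left ideal follows from the usual GNS estimate $\ip{M}{(ax)^*ax} = \ip{M}{x^*a^*ax}\leq \|a\|^2\ip{M}{x^*x}$. The right-ideal property needs the traciality of $M$: if $\ip{M}{x^*x}=0$ then $\ip{M}{(xa)^*xa} = \ip{M}{a^*x^*xa} = \ip{M}{x^*xaa^*} \leq \|a\|^2\ip{M}{x^*x}=0$, where the middle equality uses the preceding proposition. Next, in the GNS triple $(H,\pi,\xi_0)$ of $M$, it is standard that $\ker\pi$ consists exactly of those $x$ with $\pi(x)\pi(y)\xi_0 = 0$ for all $y$; applying the same Cauchy-Schwarz/trace estimate shows $\ker\pi = \mc E_0(\G)$, giving $\mc E(\G)/\mc E_0(\G) \cong \pi(\mc E(\G))$.

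For the identification with $C(\G^\sap)$, the key observation is that every slice $x=(\id\otimes\omega)(U)$ of a corepresentation $U$ decomposes, via Theorem~\ref{thm:decomp} (applied to $U=U_c\oplus U_0$), as $x = x_c + x_0$ with $x_c\in\mathbb{AP}(C_0^u(\G))$ and $x_0\in\mc E_0(\G)$. Hence $\pi(x)=\pi(x_c)$. Since the slices of the generating corepresentation $U_\G$ are dense in $\mc E(\G)$, and since the decomposition $x=x_c+x_0$ satisfies $\ip{M}{x^*x}=\ip{M}{x_c^*x_c}$ (as in the trace proposition's proof), the map $x\mapsto x_c$ is an $L^2$-isometry on a dense subspace and so $\pi(\mc E(\G)) = \pi(\mathbb{AP}(C_0^u(\G)))$ with the same closure in $\mc B(H)$. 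Moreover the cyclic vector $\xi_0$ for $\pi$ on $\mc E(\G)$ is also cyclic for $\pi|_{\mathbb{AP}(C_0^u(\G))}$.

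Finally, by Proposition~\ref{prop:haarstate} the restriction of $M$ to $\mathbb{AP}(C_0^u(\G))$ is exactly the Haar state, whose GNS representation is by definition the reducing morphism onto the reduced form $C(\G^\sap)\subseteq\mc B(L^2(\G^\sap))$. Thus $H$ can be canonically identified with $L^2(\G^\sap)$ and $\pi(\mathbb{AP}(C_0^u(\G))) = C(\G^\sap)$, which combined with the previous step gives $\pi(\mc E(\G)) = C(\G^\sap)$, as required. The main technical point to watch is the passage from the decomposition of individual corepresentation slices to the global identification of the quotient: this is where traciality of $M$ and the density of slices of $U_\G$ in $\mc E(\G)$ are both essential, since without traciality the $L^2$-isometry $x\mapsto x_c$ would not be well-defined on the full algebra.
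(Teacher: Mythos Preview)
Your proposal is correct and follows essentially the same route as the paper: traciality of $M$ gives the two-sided ideal property and $\ker\pi=\mc E_0(\G)$; the decomposition $x=x_c+x_0$ (which the paper cites as ``the proof of the previous proposition'' and you cite directly as Theorem~\ref{thm:decomp}) yields $\pi(\mc E(\G))=\pi(\mathbb{AP}(C_0^u(\G)))$; and Proposition~\ref{prop:haarstate} plus uniqueness of GNS identifies this with $C(\G^{\sap})$ acting on $L^2(\G^{\sap})$. Your write-up is in fact more explicit than the paper's, which compresses these steps into a few lines. One small remark: the ``$L^2$-isometry'' formulation is unnecessary, since once $x_0\in\ker\pi$ you have $\pi(x)=\pi(x_c)$ directly, and then $\pi(\mathbb{AP}(C_0^u(\G)))$ being closed (as a $*$-homomorphic image) plus density of slices finishes the inclusion.
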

\begin{proof}
As $M$ is a trace, it follows that $\mc E_0(\G)$ is a two-sided ideal, the
kernel of $\pi$ is $\mc E_0(\G)$, and that hence $\pi(\mc E(\G)) =
\mc E(\G) / \mc E_0(\G)$.  By the proof of the
previous proposition, we also immediately see that $\pi(\mc E(\G)) = 
\pi(\mathbb{AP}(C_0^u(\G)))$.  Now, $L^2(\G^\sap)$ is the GNS space for the
Haar state on $\mathbb{AP}(C_0^u(\G))$, which is now seen to be isomorphic to
$H$, and $C(\G^\sap) = \pi(\mathbb{AP}(C_0^u(\G)))$ which we have just shown
to be equal to $\pi(\mc E(\G)) = \mc E(\G) / \mc E_0(\G)$.
\end{proof}

We hence obtain a short exact sequence of $C^*$-algebras
\[ 0 \rightarrow \mc E_0(\G) \rightarrow \mc E(\G) \rightarrow
C(\G^\sap) \rightarrow 0 \]
where the first map in inclusion, and the second map is $\pi$.

\begin{theorem}
If $\mathbb{AP}(C_0^u(\G))$ is reduced, in particular, if $\G^\sap$ is
coamenable, then $\mc E(\G) = \mc E_0(\G) \oplus \mathbb{AP}(C_0^u(\G))$.
\end{theorem}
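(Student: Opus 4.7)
The plan is to exploit the short exact sequence of $C^*$-algebras
\[ 0 \to \mc E_0(\G) \to \mc E(\G) \xrightarrow{\pi} C(\G^\sap) \to 0 \]
produced in Proposition~\ref{prop:quot_onto_red} and show that the inclusion $\mathbb{AP}(C_0^u(\G)) \hookrightarrow \mc E(\G)$ splits it under the reducedness hypothesis. The direct sum in the statement is then just the Banach-space splitting associated to a $*$-homomorphism section.

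The first step is to identify the restriction of $\pi$ to $\mathbb{AP}(C_0^u(\G))$ with the canonical reducing quotient $\phi: \mathbb{AP}(C_0^u(\G)) \to C(\G^\sap)$. By Proposition~\ref{prop:haarstate}, the invariant mean $M$ restricts to the Haar state on the compact quantum group $\mathbb{AP}(C_0^u(\G))$, so the GNS representation $\pi$ of $M$, restricted to this subalgebra, is (canonically unitarily equivalent to) the GNS representation of the Haar state, whose image is by definition the reduced form $C(\G^\sap)$. The hypothesis that $\mathbb{AP}(C_0^u(\G))$ is reduced is precisely the statement that $\phi$ is a $*$-isomorphism; thus $\pi|_{\mathbb{AP}(C_0^u(\G))}$ is a $*$-isomorphism onto $C(\G^\sap)$.

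Hence $s := (\pi|_{\mathbb{AP}(C_0^u(\G))})^{-1} : C(\G^\sap) \to \mc E(\G)$ is a $*$-homomorphism section of $\pi$, with image $\mathbb{AP}(C_0^u(\G))$. Every $x \in \mc E(\G)$ then decomposes as $x = s\pi(x) + (x - s\pi(x))$, where $s\pi(x) \in \mathbb{AP}(C_0^u(\G))$ and $x - s\pi(x) \in \ker\pi = \mc E_0(\G)$; uniqueness of the decomposition is immediate from $\mathbb{AP}(C_0^u(\G)) \cap \mc E_0(\G) = \ker(\pi|_{\mathbb{AP}(C_0^u(\G))}) = \{0\}$, which is the injectivity established in the previous step. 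For the coamenability clause: coamenability of the compact quantum group $\G^\sap$ says that the universal-to-reduced map $C^u(\G^\sap) \to C(\G^\sap)$ is an isomorphism, and since that map factors as $C^u(\G^\sap) \twoheadrightarrow \mathbb{AP}(C_0^u(\G)) \xrightarrow{\phi} C(\G^\sap)$, both factors must be isomorphisms, so in particular $\mathbb{AP}(C_0^u(\G))$ is reduced.

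There is no serious obstacle; the only delicate point is the identification $\pi|_{\mathbb{AP}(C_0^u(\G))} = \phi$, which rests on the uniqueness of the Haar state together with Proposition~\ref{prop:haarstate}. Once that is in hand, the theorem is just the observation that a short exact sequence of $C^*$-algebras equipped with a $*$-homomorphism section splits as a Banach-space direct sum.
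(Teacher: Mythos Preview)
Your proposal is correct and follows essentially the same route as the paper: both use the short exact sequence from Proposition~\ref{prop:quot_onto_red} and split it via the inverse of the isomorphism $\pi|_{\mathbb{AP}(C_0^u(\G))}:\mathbb{AP}(C_0^u(\G))\to C(\G^\sap)$. Your treatment is slightly more explicit in justifying the identification of $\pi|_{\mathbb{AP}(C_0^u(\G))}$ with the reducing map and in arguing the coamenability clause via the factorisation $C^u(\G^\sap)\twoheadrightarrow\mathbb{AP}(C_0^u(\G))\to C(\G^\sap)$, where the paper simply cites \cite{bt}.
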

\begin{proof}
By definition, if $\mathbb{AP}(C_0^u(\G))$ is reduced then
$\pi:\mathbb{AP}(C_0^u(\G)) \rightarrow C(\G^\sap)$ is an isomorphism
and so there is a map $\iota:C(\G^\sap) \rightarrow \mathbb{AP}(C_0^u(\G))$
with $\pi\circ\iota = \id$.  Treating $\iota$ as a map $C(\G^\sap)
\rightarrow \mc E(\G)$, we get a splitting of the short exact sequence,
and so $\mc E(\G)$ splits as
$\mc E_0(\G) \oplus \mathbb{AP}(C_0^u(\G))$.  If $\G^\sap$ is coamenable
then, compare \cite[Theorem~3.1]{bt}, $\mathbb{AP}(C_0^u(\G))$ is
necessarily reduced.
\end{proof}

This result immediately gives us the claimed result in the group case,
as a group is always co-amenable.  We shall see below in
Section~\ref{sec:cocomm} that even in the cocommutative case, we need some
condition to get such a splitting result.

\section{Compactification examples}

We motivated our approach to compactifications in Section~\ref{sec:cmpts}
by showing that it agreed with the ideas of Spronk and Stokke in \cite{ss}.
Indeed, \cite[Theorem~3.14]{ss} gives a construction of $G^{\mc CH}$
by showing that $G^{\mc CH}$ can be realised
as the \emph{Eberlein compactification} of $G$, constructed in
\cite[Section~3.1]{ss}.  Namely, let $\omega_G:G\rightarrow
\mc{U}(H_{\omega_G})$ be the universal (unitary, SOT continuous) representation
of $G$, and then set $G^{\mc CH}$ to be the weak$^*$-closure of $\omega_G(G)$;
notice that by construction $G^{\mc CH}\in\wch$.

This is the same as our construction, as Theorem~\ref{thm:comp_construction}
above essentially constructs a maximal corepresentation $U$ of $C_0(G)$
with the property that slices of $U$ are dense in a $*$-algebra.
In Section~\ref{sec:decomp} above we saw that for Kac algebras, the
universal corepresentation has this property.  As corepresentations of
$C_0(G)$ biject with representations of $G$, this is the same construction
as in \cite{ss}.

\begin{remark}\label{rem:haa_no_good}
In Section~\ref{sec:haatenprod} we showed links with the extended Haagerup
tensor product.  Suppose $G$ is a locally compact group.  It would be tempting
to look at functions $F\in L^\infty(G)$ with $\Delta(F)$ in the norm closure of
$L^\infty(G) \otimes^{eh} L^\infty(G)$, say giving the collection $E_{eh}(G)$.
This would give a C$^*$-algebra which contains $\mc E(G)$ (by the remarks in 
Section~\ref{sec:haatenprod}) and seems to have many of the properties of
$\mc E(G)$.  However, it seems quite possible that this gives something larger
than $\mc E(G)$, even in this classical (i.e. $G$ is a group) setting.

We follow Spronk's paper \cite{spronk}.
Now, $\Delta(F)$ is the function $G\times G\rightarrow\mathbb C;
(s,t)\mapsto F(st)$.  As $L^\infty(G)$ is commutative, it is not hard to
see that the function $u(s,t) = \Delta(F)(s,t^{-1}) = F(st^{-1})$ will then
also be in $L^\infty(G)\otimes^{eh} L^\infty(G)$.  Furthermore, $u$ is
\emph{invariant} in the sense of \cite[Section~5]{spronk} as $u(sr,t)
= F(srt^{-1}) = F(s(tr^{-1})^{-1}) = u(s,tr^{-1})$.  Thus $a:G\rightarrow
\mathbb C$ defined by $a(st)=u(s,t^{-1}) = F(st)$ is a \emph{completely
bounded multiplier} of $A(G)$; but $a=F$ of course.  We can reverse this
argument (see again \cite{spronk}) and so we conclude that $E_{eh}(G)$ is
precisely the norm closure of $M_{cb}A(G)$ in $L^\infty(G)$.  However, it
is not clear to us why, for arbitrary $G$, this wouldn't be larger than
$\mc E(G)$.
\end{remark}

\subsection{The cocommutative case}\label{sec:cocomm}

We now study the cocommutative case.  Let $G$ be a locally compact group, and
set $\mathbb G=\hat G$  so that
$C^u_0(\mathbb G) = C^*(G)$ the (full) group C$^*$-algebra of $G$.  Again,
this is a Kac algebra, and so $\mc E(G)$ is the closure of the image of the measure
algebra $M(G) = C_0(G)^*$ inside $M(C^*(G))$.  We also know that
$\mathbb{AP}(C^*(G))$ is the norm closure of the translation operators
$\{ \lambda(g) : g\in G \}$ inside $M(C^*(G))$, see \cite[Proposition~4.3]{soltan}
or \cite[Section~5]{daws}.

Let us think about compact vectors, and decompositions, in this setting.
We shall just sketch these results, leaving the details to be verified by the
reader.  There is a bijection between corepresentations of $C^*(G)$ and
$*$-representations of $C_0(G)$.  Any $*$-representation of $C_0(G)$ is the 
direct sum of cyclic representations, and these all have the form of $C_0(G)$ 
acting by multiplication on $L^2(G,\mu)$, where $\mu$ is a Borel probability 
measure on $G$.  Let $\pi:C_0(G)\rightarrow L^2(G,\mu)$ induce the
corepresentation $U\in M(C^*(G)\otimes\mc B(L^2(G,\mu)))$.  For
$\xi,\eta\in L^2(G,\mu)$ the coefficient $(\id\otimes\omega_{\xi,\eta})(U)
\in \mc E(\hat G) \subseteq M(C^*(G))$ is the operator given by (left)
convolution by the measure $\phi$ where
\[ \int_G f(s) \ d\phi(s) = \int_G f(s^{-1}) \xi(s) \overline{\eta(s)}
 \ d\mu(s). \]
The $s^{-1}$ occurs because of our choice of conventions: essentially
because $\widehat W = \sigma W^*\sigma$.

We know that $\mathbb{AP}(C^*(G))$ is equal to the closed linear span of the
translation operators $\{ \lambda(s):s\in G\}$ in $M(C^*(G))$; see
\cite[Section~4.2]{soltan}.  This is a compact quantum group living between
$C^*(G_d)$ and $C^*_r(G_d)$, where $G_d$ is $G$ with the discrete topology;
compare \cite[Section~6.2]{daws}.  Let $U$ be a corepresentation of $\mathbb{AP}(C^*(G))$ (which always comes from a corepresentation of
$C^*(G_d)$, so from a $*$-representation $\pi_d$ of $c_0(G_d)$) and let this
induce a corepresentation of $C^*(G)$, so a $*$-representation $\pi$ of
$C_0(G)$ on $H$.  Then for $f\in C_0(G)$, let this give a function in
$M(C_0(G_d)) \cong \ell^\infty(G)$, and then $\pi(f) = \pi_d(f)$ where
we take the strict extension of $\pi_d$ to $\ell^\infty(G)$.  It follows that
$\pi$ is equivalent to a sum of ``multiplication'' representations on
$L^2(G,\mu)$ where $\mu$ is a purely atomic measure (because this is how
$\pi_d$ will arise).

So given $\pi:C_0(G)\rightarrow\mc B(L^2(G,\mu))$ inducing $U$, the
decomposition result of Theorem~\ref{thm:decomp} induces a decomposition of
$\pi$, say as $\pi=\pi_c+\pi_0$.  This comes from decomposing $\mu=\mu_c+\mu_0$
where $\mu_c$ is the atomic part of $\mu$, and $\mu_0$ is the non-atomic part,
and where $\pi_c:C_0(G)\rightarrow\mc B(L^2(G,\mu_0))$, and similarly for
$\pi_0$.  Then the results of Section~\ref{sec:alg_level} readily show that
if $\mu$ is a measure on $G$, inducing the convolution operator
$L_\mu\in M(C^*(G))$, then the invariant mean satisfies $\ip{M}{L_\mu} =
\mu( \{e\} )$, so singling out the atomic part at the identity of $e$.

For example, if $G_d$ is not amenable, then the canonical map from
$\mathbb{AP}(C_0^u(\hat G)) \rightarrow C^*_r(G_d)$ is not an isomorphism,
see \cite[Section~6.2]{daws}.  Thus $M$, restricted to 
$\mathbb{AP}(C_0^u(\hat G))$, is not faithful, and so $\mc E_0(\G) \cap
\mathbb{AP}(C_0^u(\hat G))$ is more than just $\{0\}$.  The ``problem'' here
is that $\mathbb{AP}(C_0^u(\hat G))$ is larger than $C^*_r(G_d)$.  However,
there can be no map $r:C^*_r(G_d) \rightarrow \mathbb{AP}(C_0^u(\hat G))$
with $q\circ r = \id$, where $q:\mc E(\hat G) \rightarrow C^*_r(G_d)$ is
as in Proposition~\ref{prop:quot_onto_red}.  Indeed, for any $s\in G$, we have
that $q(\lambda(s)) = \lambda(s) \in C^*_r(G_d)$, and so we would need that
$r(\lambda(s)) = \lambda(s)$.  As both $C^*_r(G_d)$ and
$\mathbb{AP}(C_0^u(\hat G))$ are the closure of $\{\lambda(s):s\in G\}$ in
different algebras, this would force $r$ to be have dense range, and have
closed range (as a $*$-homomorphism) and hence be onto, a contradiction.

In the literature, it is more usual to work with $C^*_r(G)$, instead of $C^*(G)$,
and hence view $M(C^*_r(G))$ as a subalgebra of $VN(G)$.  In this context, it is
well-known that even $\wap(VN(G))$ always has a unique invariant mean,
\cite[Proposition~5]{gran}.  Ideas somewhat related to, but put to different
ends than, those in this section can be found in \cite{b, dr}.

\vspace{5ex}

\noindent\emph{Authors's Addresses:}

\noindent\parbox[t]{3in}{
Instytut Matematyczny PAN\\
8, ul. \'Sniadeckich\\
00-656, Warsaw\\
Poland

\smallskip\noindent\emph{Email:} \texttt{biswarupnow@gmail.com}}
\hspace{2em}
\parbox[t]{3in}{School of Mathematics\\
University of Leeds\\
Leeds LS2 9JT\\
United Kingdom

\smallskip\noindent\emph{Email:} \texttt{matt.daws@cantab.net}}

\end{document}